\definecolor{lightgray}{gray}{0.9}
\newcommand{\multiline}[1]{%
  \begin{tabularx}{\dimexpr\linewidth-\ALG@thistlm}[t]{@{}X@{}}
    #1
  \end{tabularx}
}
\newcommand{\R}{\mathbb{R}}
\newcommand{\Z}{\mathbb{Z}}
\newcommand{\Q}{\mathbb{Q}}
\newcommand{\mb}[1]{\mathbb{#1}}
\newcommand{\tsc}[1]{\textsc{#1}}
\newcommand{\oa}[1]{\vec{#1}}
\newcommand{\allones}{\mathbbm{1}}
\newcommand{\recc}{\textsc{recc}}
\newcommand{\conv}{\textsc{conv}}
\newcommand{\cone}{\textsc{cone}}
\newcommand{\vertex}{\textsc{vertex}}
\newcommand{\edge}{\textsc{edge}}
\newcommand{\epi}{\textsc{epi}}
\newcommand{\dom}{\textsc{dom}}
\DeclareRobustCommand{\cev}[1]{%
  {\mathpalette\do@cev{#1}}%
}
\newcommand{\do@cev}[2]{%
  \vbox{\offinterlineskip
    \sbox\z@{$\m@th#1 x$}%
    \ialign{##\cr
      \hidewidth\reflectbox{$\m@th#1\vec{}\mkern4mu$}\hidewidth\cr
      \noalign{\kern-\ht\z@}
      $\m@th#1#2$\cr
    }%
  }%
}
 \let\mathscr\relax%
\newcommand{\T}{\mathsf{\scriptscriptstyle T}} %
\newcommand*{\defeq}{\mathrel{\vcenter{\baselineskip0.5ex \lineskiplimit0pt
                     \hbox{\footnotesize.}\hbox{\footnotesize.}}}%
                     =}
\newcommand{\suchthat}{: }
\newcommand{\fYnoref}[1][Y]{f_{#1}}
\newcommand{\fYx}[1][Y]{\fYnoref}
\newcommand{\fY}[1][Y]{\hyperref[definition:value_function]{\fYnoref[#1]}}
\newcommand{\domfY}[1][Y]{\dom(\fY[#1])}
\newcommand{\epifY}[1][Y]{\epi(\fY[#1])}
\newcommand{\revpolar}{\hyperref[epigraph_rev_polar]{E^{\#}}}
\newcommand{\feasreg}{\hyperref[eq:feasible_region]{\mathcal{F}}}
\newcommand{\nodearcmatrix}[1][N]{\mathbf{#1}}
\newcommand{\cardinality}[1]{\lvert #1 \rvert}
\newcommand{\SRC}{\textsc{\texttt{SRC}}}
\newcommand{\FWD}{\textsc{\texttt{FWD}}}
\newcommand{\SNK}{\textsc{\texttt{SNK}}}
\newcommand{\textsuchthat}{}
\newcommand{\solvereversepolar}{\mbox{\hyperref[algorithm:solve_polar]{\tsc{SolveReversePolar}}}\xspace}
\newcommand{\separatecorner}{\mbox{\hyperref[algorithm:separate_corner]{\tsc{SeparateCornerBendersCuts}}}\xspace}
\newcommand{\cglp}{\mbox{\hyperref[alg_solve_polar:cglp]{\tsc{(CGLP)}}}\xspace}
\newcommand{\pushright}[1]{\ifmeasuring@#1\else\omit\hfill$\displaystyle#1$\fi\ignorespaces}
\newcommand{\pushleft}[1]{\ifmeasuring@#1\else\omit$\displaystyle#1$\hfill\fi\ignorespaces}
\newcounter{HALG@line}
\renewcommand{\theHALG@line}{\thealgorithm.\arabic{ALG@line}}
\newcommand{\mytitle}[1]{#1}
  \let\mytitle\@firstofone
\providecommand{\keywords}[1]{\textit{Keywords:} #1}
\DeclareMathOperator*{\argmin}{arg\,min}
\newcommand{\Halmos}{\qed}
\newenvironment{myproof}[1]{%
\par\vspace{0.5cm}\noindent{\it Proof #1. }%
}{\hfill \Halmos}
\theoremstyle{plain}
\newtheorem{theorem}{Theorem}
\newtheorem{proposition}{Proposition}
\newtheorem{lemma}{Lemma}
\newtheorem{claim}{Claim}
\newtheorem{fact}{Fact}
\theoremstyle{definition}
\newtheorem{definition}{Definition}
\newtheorem{remark}{Remark}
\newenvironment{assumption*}[1][Assumption]{\par\vspace{0.5cm}\noindent\textbf{#1}}{}
\newenvironment{APPENDICES}{\appendix}{}
\begin{document}
\title{Approximating value functions via corner Benders' cuts}

\author[1]{Matheus J. Ota\thanks{(mjota@uwaterloo.ca)}}
\author[1]{Ricardo Fukasawa\thanks{(rfukasawa@uwaterloo.ca)}}
\author[2]{Aleksandr M. Kazachkov\thanks{(akazachkov@ufl.edu)}}
\affil[1]{Department of Combinatorics and Optimization, University of Waterloo, Waterloo, Ontario, Canada}
\affil[2]{Department of Industrial and Systems Engineering, University of Florida, Gainesville, Florida, United States of America}

\maketitle

\begin{abstract}
We introduce a novel technique to generate Benders' cuts from a conic relaxation (``corner'') derived from a basis of a higher-dimensional polyhedron that we aim to outer approximate in a lower-dimensional space. To generate facet-defining inequalities for the epigraph associated to this corner, we develop a computationally-efficient algorithm based on a compact reverse polar formulation and a row generation scheme that handles the redundant inequalities. Via a known connection between arc-flow and path-flow formulations, we show that our method can recover the linear programming bound of a Dantzig-Wolfe formulation using multiple cuts in the projected space. In computational experiments, our generic technique enhances the performance of a problem-specific state-of-the-art algorithm for the vehicle routing problem with stochastic demands, a well-studied variant of the classic capacitated vehicle routing problem that accounts for customer demand uncertainty.
\end{abstract}

\keywords{integer programming, Benders' decomposition, Dantzig-Wolfe decomposition, Lagrangian relaxation, network flow, stochastic vehicle routing problem.}

\section{\mytitle{Introduction}}
\label{section:intro}

Optimizing a large-scale \emph{linear program} (LP) often relies on \emph{decomposition} methods that 
distinguish ``easier-to-solve'' structure in the problem from ``complicating'' subproblems, repeatedly deriving information from the subproblems as needed until eventually converging to optimality~\citep{vanderbeck2010reformulation}.
We focus on improving \emph{Benders' decomposition}~\citep{Benders1962}, a method that partitions the LP variables
into \emph{master variables} and \emph{subproblem variables}, which we may also refer to as \emph{first-stage} and \emph{second-stage} variables. In this approach, an initial convex relaxation is formulated in the space of the first-stage variables and is iteratively refined with \emph{Benders' cutting planes (cuts)}, which are inequalities derived from solving the second-stage subproblems.

Benders' cuts relay constraints and costs from the subproblems by projecting them into the space of the master decision variables. This projection step requires the solution of an LP in the space of the subproblem variables and the subsequent cut is derived based on the LP optimal primal/dual solutions (i.e., the Benders' subproblem). While these cuts suffice for eventual convergence, the procedure may require a high overall computational time~\citep{rahmaniani2017benders}.
Our starting point is the observation that using only an optimal solution of a Benders' subproblem discards other potentially relevant information, such as the corresponding optimal basis.
This inspires our main contributions:
\begin{itemize}[itemsep=0pt,leftmargin=*]
    \item We propose \emph{corner Benders' cuts} to \textbf{capture more subproblem structure from basis information} and accelerate the convergence of typical Benders' decomposition methods.
    \item We argue that, even though the corner Benders' cuts are obtained from a relaxation of the original LP, any regular Benders' cut is in fact a corner Benders' cut. 
    \item We show how one can \textbf{efficiently obtain and generate corner Benders' cuts} based on objective function cuts (i.e., inequalities that impose a lower bound on the objective function value), attaining the same LP bound implied by those cuts,
    but without the drawback of using objective-parallel inequalities.
    \item We computationally demonstrate that the ideas derived lead to \textbf{state-of-the-art branch-and-cut algorithm performance} for the \emph{vehicle routing problem with stochastic demands}.
\end{itemize}

\noindent
As far as we are aware, this is the first work that leverages basis information to achieve better computational performance of Benders' cuts without exploiting integrality constraints.

\subsection{\mytitle{Problem setup}}

Concretely, we consider the following optimization problem over variables $x \in X \subseteq \R^n$ and $y \in Y \subseteq \R^m$, where $X$ and $Y$ are nonempty polyhedra defined by rational data, with~$Y$ assumed to be pointed. The variables are coupled by~$p$ equations, $Tx + Qy = h$, referred to as \emph{linking constraints}:
\begin{equation}
    \tag{LP}
    \label{problem:basic}
    \begin{aligned}
        z^* \defeq \min_{x,y} \quad &c^\T x + d^\T y \\
        & T x + Q y = h, \\
        & (x, y) \in X \times Y.
    \end{aligned}
\end{equation}
While we do not include any integer restrictions on the variables,
problem~\eqref{problem:basic} arises in the discrete setting as well, when solving a relaxation of a mixed-integer LP instance,
which is the context of our computational experiments.

We define the first-stage problem in the $x$ variables and we treat~$Y$ as a polyhedron associated with the second-stage subproblem.%
\footnote{The set $Y$ could encompass a block structure with multiple smaller subproblems. For an extension to this setting, see Appendix~\ref{appendix:block_diagonal}.}
Benders' decomposition projects out the $y$ variables via cuts that approximate the second-stage \textit{value function} given fixed first-stage decisions $\bar{x}$:
\begin{equation*}
    f_Y(\bar{x}) \defeq
        \min_{y} \{ d^\T y \suchthat Qy = h - T \bar{x},\, y \in Y \}.
\end{equation*}
This leads to the following reformulation of \eqref{problem:basic} (with same optimal objective value $z^*$) in the space of the first-stage variables and an additional \emph{epigraph} variable $\theta$:
\begin{equation}
    \min_{x,\theta} \{ c^\T x + \theta \suchthat \theta \ge \fYx(x),\, x \in X \}.
    \label{problem_master}
\end{equation}

\subsection{\mytitle{Intuition for corner Benders' cuts}}

The purpose of Benders' cuts is to provide an outer approximation of Problem~\eqref{problem_master} with linear inequalities. In prior work, a central focus for improving Benders' decomposition has been choosing an appropriate normalization for a cut-generating linear program associated to the dual formulation of $\fYx(\bar{x})$ for a given candidate solution $(\bar{x},\bar{\theta})$ violating the epigraph constraint in Problem~\eqref{problem_master}~\citep{fischetti2010note, bonami2020implementing, conforti2019facet, brandenberg2021refined, hosseini2024deepest}. We instead concentrate on a new \emph{approach} to Benders' cuts. We note that there have been many other works that deal with Benders' cuts that use the integrality of some of the variables, for example~\citep{kuccukyavuz2017introduction, gade2014decomposition, zou2019stochastic, rahmaniani2020dualdecomp, chen2022generating}. We refrain from doing a thorough literature review of those types of works since our improvement is based solely on \eqref{problem:basic}, which lacks integer restrictions on the variables.

The core motivation for our approach is computational:
a single Benders' cut is generated by solving an LP in the $Y$ space and provides an outer approximation of Problem~\eqref{problem_master},
but the same LP also offers basis information from which we will derive a set of Benders' cuts and obtain a tighter approximation of the value function.
For efficiency in computing these cuts, we replace $Y$ by a much simpler relaxation $C$.
This idea is illustrated in Figure~\ref{figure:epigraph},
where~$f_C$ denotes the function obtained by replacing~$Y$ with~$C$ in the definition of~$\fYx$.
Specifically, given a basic feasible solution to $\fYx(\bar{x})$, we use for $C$ the \emph{basis cone}, which (perhaps more evocatively) is also known as the \emph{relaxed (Gomory's) corner polyhedron}---an object often used for deriving cuts for integer programs~\citep{ConCorZam11_corner-polyhedron-and-intersection-cuts}---defined by the subset of constraints that are tight at an optimal solution and associated to the nonbasic variables~\citep{Gomory65_on-the-relation-between-integer-noninteger-solutions,gomory1969some}. We show that every Benders' cut is valid for the epigraph of~$f_C$, for some appropriately chosen basis cone~$C$. Moreover, in our computational experiments, when $C$ is selected according to an objective function cut~$c^\T x + \theta \ge z^*$ (as discussed in Section~\ref{subsection:corner_bounds}), generating valid inequalities for the epigraph of~$f_C$ exhibits better empirical performance than the standard Benders' decomposition approach.

We are aware of two previous works that use basis information for two-stage stochastic programs~\citep{gade2014decomposition, romeijnders2016convex}, both of which focus on the case with integrality constraints on the second-stage variables. In~\cite{gade2014decomposition} the authors use basis information to generate parametric Gomory cuts for the second-stage problem. In contrast,~\cite{romeijnders2016convex} use a convex approximation of the second-stage value function and leverage periodicity properties of value functions associated with corner polyhedra (maintaining integrality on the variables) to bound the approximation error. Despite also using basis information, both approaches are fundamentally different from our proposed method.

\begin{figure}
	\centering
	\includegraphics[scale=1.1]{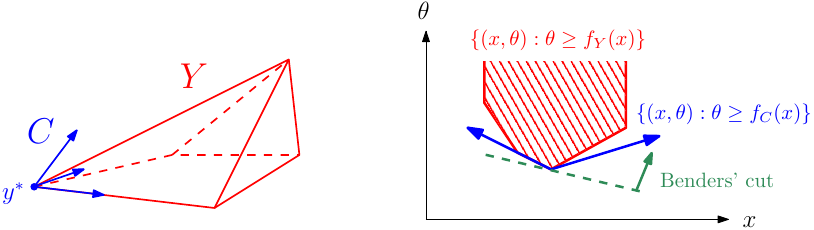}
	\caption{
        The basis associated to a vertex $y^*$ of $Y$ yields the translated cone (the relaxed corner polyhedron) $C \supseteq Y$, shown in the left panel.
        The facets of $C$ can then be projected into the~$(x, \theta)$-domain, depicted in the right panel.
    }
    \label{figure:epigraph}
\end{figure}

\subsection{\mytitle{Network flow structures}}

While our overall framework applies generically,
when we further assume that $Y$ is a network flow polytope,
we obtain additional theoretical insights and algorithmic enhancements. 
This is achieved by using the known combinatorial structure of bases of network flow problems~\citep{ahuja1988network}, which allows us to accelerate the Benders' decomposition process and derivation of corner Benders' cuts (see Section~\ref{section:vrpsd} and Appendix~\ref{appendix:corner_network}).

A network flow structure in the second-stage decisions is encountered frequently and captures important classes of problems.
For example, it appears in numerous transportation and logistics applications that have been shown to benefit from Benders' decomposition~\citep{costa2005survey, rahmaniani2017benders}.

A network flow polytope can also be used to efficiently encode the convex hull of a finite set of vectors~\citep{de2022arc}.
This principle underlies how \emph{decision diagrams} can offer strong relaxations of combinatorial optimization problems~\citep{becker2005, behle_thesis, BerCirHoeHoo16_decision-diagrams, tjandraatmadja2019target, castro2022combinatorial, lozano2022constrained}, which can be used to model second-stage problems for two-stage stochastic programs~\citep{lozano2022binary}.

The convex hull subproblem is also a fundamental aspect of \emph{Dantzig-Wolfe (DW) decomposition}, for which the partitioning scheme is based on a complicating subsystem of constraints, rather than of the variables as in Benders' decomposition.
Satisfying the complicating constraints can be rewritten as requiring that solutions can be expressed as a convex combination of extreme points (in the case that the subproblem is a polytope),
which gives rise to a network flow structure,
as we discuss in Section~\ref{subsection:path_flow}. This is reflected in~\eqref{problem:basic} by treating the $x$ space as ``original'' problem variables and the $y$ space as flow variables coming from a DW reformulation.

In this context, \cite{chen2024recovering} recently presented a strategy to replace an objective function cut that recovers the \emph{DW bound} (i.e., the LP bound of a DW reformulation) by a family of cuts in the original variables that are not parallel to the objective function. In that sense, the approach of \cite{chen2024recovering} shares some of the same ideas as our work. In fact, our computational experiments are framed in the context of  adding corner Benders' cuts based on DW decomposition bounds.
Our approach of taking advantage of basis information,  provides computational advantages compared to the method in \cite{chen2024recovering}, which does not exploit that.

We emphasize that, as highlighted in~\cite{uchoa2024optimizing}, a motivation for recovering the DW bound through cuts is that many commercial optimization solvers have native support for the addition of cuts, but not for the addition of variables needed in a branch-and-price method. Moreover, existing branch-cut-and-price frameworks (e.g., \cite{junger2000abacus,BolusaniEtal2024OO,gamrath2010experiments,sadykov2021bapcod}), remain less developed than the leading optimization solvers. In this sense, formulations that use only a fixed number of variables are easier to implement and can use the latest technology of commercial optimization solvers.

\subsection{\mytitle{Paper organization}}

We proceed by first establishing central definitions and notation in Section~\ref{section:preliminaries}. Section~\ref{section:corner} contains our main results, introducing a new way of generating Benders' cuts via projected corner relaxations.
Section~\ref{section:vrpsd} takes advantage of network flow structure and an explicit connection to DW reformulations,
then specializes the methodology to the 
two-stage capacitated vehicle routing problem with stochastic demands~\citep{gendreau201650th}, 
in which routes need to be decided in a first stage before customer demands are revealed, leading to second-stage recourse decisions.
The computational results, discussed in Section~\ref{section:experiments}, indicate that for instances with a large number of vehicles, corner Benders' cuts enhance the performance of a problem-specific, state-of-the-art branch-and-cut algorithm and compare favorably to the recently-proposed method by \cite{chen2024recovering}.

\section{\mytitle{Preliminaries: key notation and definitions}}
\label{section:preliminaries}

In this section, we establish the notation and briefly present the main mathematical tools that we use throughout the paper.

\subsection{\mytitle{Supports and Fenchel cuts}}
\label{subsection:fenchel}

The main goal of this work is to study valid inequalities for Problem~\eqref{problem_master}, for which a useful point of view is to regard such inequalities as Fenchel cuts~\citep{boyd1994fenchel}. 
The concept of a Fenchel cut is fundamental: whenever we are given an optimization oracle over a set, we can use this oracle to compute inequalities \emph{valid} (i.e., satisfied by all feasible points) for the set. In this way, Fenchel cuts arise as a natural consequence of fundamental results on the separation of convex sets~\citep{hiriart1996convex, boyd2004convex}, and led to the development of important tools for integer and stochastic programming, such as local cuts~\citep{chvatal2013local}, Lagrangian cuts~\citep{zou2019stochastic, chen2024recovering}, and Fenchel decomposition~\citep{ntaimo2013fenchel}. We start with the following definition.

\begin{definition}
    For $\mathcal{X} \subseteq \R^t$, the \textit{support} of $\mathcal{X}$ is the function~$\sigma_{\mathcal{X}} : \R^t \to \R \cup \{\pm \infty\}$ that sends each~$\alpha \in \R^t$ to the value~$\inf_{x \in \mathcal{X}} \{ \alpha^\T x \}$.
    \label{definition:support}
\end{definition}
\noindent
Notice that by the definition of the infimum,~$\sigma_{\mathcal{X}}(\alpha) = + \infty$ if~$\mathcal{X}$ is empty, and~$\sigma_{\mathcal{X}}(\alpha) = - \infty$ if~$\{\alpha^\T x \}_{x \in \mathcal{X}}$ is not bounded below by a finite real number.

We make a couple of additional observations based on Definition~\ref{definition:support}. First, the standard support function used in convex analysis~\citep{hiriart1996convex, cornuejols2006convex} replaces the infimum by the supremum in Definition~\ref{definition:support}; however, we shall see that for the purposes of writing Benders' cuts, it will be more convenient to use the infimum. Second, if each element of~$\mathcal{X}$ is identified with a tuple~$(x, y) \in \R^{t} \times \R^{t'}$, then instead of writing~$\sigma_{\mathcal{X}}((\alpha, \gamma))$ to refer to the value of~$\inf_{(x, y) \in \mathcal{X}} \{ \alpha^\T x + \gamma^\T y \}$, we use the shorthand~$\sigma_{\mathcal{X}}(\alpha, \gamma)$. 

With Definition~\ref{definition:support} in hand, obtaining valid inequalities for~$\mathcal{X}$ is immediate.
\begin{definition}
    For any set~$\mathcal{X} \subseteq \R^t$ and~$\alpha \in \R^t$ such that~$\sigma_{\mathcal{X}}(\alpha)$ is finite, an inequality (valid for~$\mathcal{X}$) is said to be a \textit{Fenchel cut} if it has the form~$\alpha^\T x \geq \sigma_{\mathcal{X}}(\alpha)$.
    \label{definition:fenchel}
\end{definition}

Definition~\ref{definition:fenchel} shows that, if we have access to an optimization oracle over~$\mathcal{X}$, then we can generate Fenchel cuts. As mentioned earlier, this is an old idea, and in the context of integer programming, it goes back to the seminal work of~\cite{boyd1994fenchel}, where the author generates cutting planes by calling an oracle that solves knapsack problems. In a similar spirit, we shall see in Section~\ref{section:corner} that Benders' cuts can be interpreted as Fenchel cuts that use an optimization oracle over the higher-dimensional polyhedron~$Y$.

\subsection{\mytitle{Value functions and epigraphs}}
\label{subsection:epigraph}

As discussed in Section~\ref{section:intro}, valid inequalities for Problem~\eqref{problem_master} can be derived using the concepts of value functions and epigraphs, which we formalize next.
Recall that $p$ denotes the number of rows in the linking constraints $Tx + Qy = h$.
We \emph{redefine} the value function $\fYx$ in terms of a generic set $\mathcal{Y}$ (which may differ from polyhedron $Y$),
a generalization that aids the discussion in Section~\ref{section:corner}.
For convenience, we also use a change of variable from $x \in \R^n$ to $w \in \R^p$, representing (for fixed $x$ values) the quantity $h - Tx$.
This enables us to simplify how we define the domain and epigraph of the value function in Definition~\ref{definition:domain_epigraph}.

\begin{definition}
The \textit{value function} with respect to a set~$\mathcal{Y} \subseteq \R^m$ is 
    $
        f_{\mathcal{Y}}(w) \defeq \inf_{y \in \mathcal{Y}} \{ d^\T y \suchthat Qy = w \},
    $
    for any $w \in \R^p$.
\label{definition:value_function}
\end{definition}

\begin{definition}
    Let~$\mathcal{Y} \subseteq \R^m$. The \textit{domain} and \textit{epigraph} of $\fYnoref$ are the sets
        $\dom(\fYnoref[\mathcal{Y}]) \defeq \{w \in \R^p \suchthat \fYnoref[\mathcal{Y}](w) < + \infty \}$
    and
        $\epi(\fYnoref[\mathcal{Y}]) \defeq \{ (w, \theta) \in \R^p \times \R \suchthat \theta \geq \fYnoref[\mathcal{Y}](w) \}$.
\label{definition:domain_epigraph}
\end{definition}

Let $\feasreg$ denote the feasible region of Problem~\eqref{problem_master}.
Since, for $x \in X$, 
    $\theta \ge \min_{y \in Y} \{d^\T y \suchthat Qy = h - Tx\}$
if and only if
    $\theta \ge \fY(h-Tx)$,
we may write
(applying Definition~\ref{definition:domain_epigraph} with~$\mathcal{Y} = Y$)
\begin{equation}
    \mathcal{F} = \{(x, \theta) \suchthat x \in X, (h - Tx, \theta) \in \epifY \}.
    \label{eq:feasible_region}\tag{\ensuremath{\mathcal{F}}}
\end{equation}
Note that if
    $\alpha^\T w + \alpha_0 \theta \geq \beta$
is valid for~$\epifY$, then
    $\alpha^\T (h - Tx) + \alpha_0 \theta \geq \beta$
is valid for $\feasreg$.

\subsection{\mytitle{Benders' cuts}}
\label{subsection:benders}

The following is our definition of a Benders' cut.
\begin{definition}
    An inequality~$\alpha^\T w + \alpha_0 \theta \geq \beta$ is a \textit{Benders' cut} if it is valid for~$\epifY$.
    \label{definition:benders_cut}
\end{definition}

Since variable~$\theta$ can increase arbitrarily in $\feasreg$
(for any $x \in X$, $(x,\theta) \in \feasreg$ for all $\theta \ge \fYx(x)$), 
we know that $\alpha_0 \geq 0$.
The use of a generic coefficient $\alpha_0$ in Definition~\ref{definition:benders_cut} allows us to treat optimality and feasibility cuts in a unified way (similarly to the approach taken in~\cite{fischetti2010note}): in the case that~$\alpha_0 = 0$, we have that the Benders' feasibility cut~$\alpha^\T w \geq \beta$ is a valid inequality for $\domfY$; otherwise, we may scale the inequality to get the optimality cut~$\theta \geq \frac{1}{\alpha_0} (-\alpha^\T w + \beta)$.

Before we close this section, we make a few more notes on Definition~\ref{definition:benders_cut}. First, by linear programming duality we know that the epigraph of the value function $\fY$ is convex and closed. (In fact, Proposition 1.50 of~\cite{mordukhovich2014easy} shows that~$\epi(f_{\mathcal{Y}})$ is convex and closed even for much more general choices of $\mathcal{Y} \subseteq \R^m$.) Therefore,~$\epifY$ can be written as the intersection of the halfspaces defined by a set of Benders' cuts. Second, to separate Benders' cuts, it suffices to consider the coefficients $(\alpha, \alpha_0)$ (ignoring~$\beta$), since, if $\alpha^\T w + \alpha_0 \theta \geq \beta$ is a Benders' cut, then~$\beta$ is at most~$\sigma_{\epifY}(\alpha, \alpha_0)$. In other words, we may restrict our attention only to Fenchel cuts for~$\epifY$.

\section{\mytitle{Corner Benders' cuts}}
\label{section:corner}

We are now ready to address how to use basis information to generate Benders' cuts more effectively, as illustrated in Figure~\ref{figure:epigraph}. Consider the scenario where we solve a relaxation of Problem~\eqref{problem_master} to obtain a vector~$(\bar{x}, \bar{\theta}) \in \R^{n + 1}$. Suppose that we solve the separation problem for~$\epifY$ (i.e., the \emph{Benders' subproblem}) and we obtain an inequality (valid for~$\epifY$) of the form
\begin{equation}
    \alpha^\T w + \alpha_0 \theta \geq \beta
    \label{ineq:benders_opt_cut}
\end{equation} 
such that $\alpha^\T (h - T \bar{x}) + \alpha_0 \bar{\theta} < \beta$. At this point, the standard Benders' decomposition method would add the cut~$\alpha^\T (h - Tx) + \alpha_0 \theta \geq \beta$ to the master problem and repeat the process by solving another relaxation of Problem~\eqref{problem_master}. 

Our main idea is to use the Benders' cut~\eqref{ineq:benders_opt_cut} to compute a set of inequalities that yield a tighter approximation of $\epifY$ than the single cut~\eqref{ineq:benders_opt_cut} alone.
To accomplish this, we first define in Section~\ref{subsection:corner_definitions} a conic relaxation~$C$ of polyhedron~$Y$ that we call a \textit{corner}. Next, we show in Section~\ref{subsection:corner_benders} how to find a corner~$C$ such that every point in the epigraph of~$\fY[C]$ satisfies the given Benders' cut~\eqref{ineq:benders_opt_cut} (see Figure~\ref{figure:epigraph}). Rather than adding only a single optimality cut to the master problem, we propose in Section~\ref{subsection:corner_separation} to use an algorithm that can efficiently separate multiple facets of~$\epifY[C]$. Finally, in Section~\ref{subsection:corner_bounds} we establish the existence of a single corner~$C$ such that, in replacing~$\fY$ by~$\fY[C]$ in Problem~\eqref{problem_master}, we still preserve its optimal value~$z^*$.

\subsection{\mytitle{Definitions}}
\label{subsection:corner_definitions}

Henceforth,~$\cone(\cdot)$ is the conical-hull operator, and the summation of sets refers to the Minkowski sum.
We first define a corner of $Y$ as any conical relaxation whose apex lies at an extreme point of $Y$.
\begin{definition}
    A set~$C \subseteq \R^m$ is a \emph{corner} of~$Y$, if~$C = \{y^*\} + \cone(R)$ contains~$Y$, where~$y^* \in \R^m$ is an extreme point of~$Y$ and~$R \subseteq \R^m$. If~$C \subseteq \R^m$ is a corner of~$Y$ and~$\sigma_{C}(\gamma)$ is finite, then~$C$ is said to be an \emph{optimal corner} with respect to~$\sigma_Y(\gamma)$.
    \label{definition:corner}
\end{definition}

The next result justifies the term ``optimal corner''.
\begin{lemma}
If~$C \subseteq \R^m$ is an optimal corner with respect to~$\sigma_Y(\gamma)$, then~$\sigma_C(\gamma) = \sigma_Y(\gamma)$.
\label{lemma:corner_equal_support}
\end{lemma}
\begin{proof}
By the definition of a corner,~$C = \{y^*\} + \cone(R)$, where~$y^*$ is an extreme point of~$Y$ and~$R \subseteq \R^m$. For every~$\bar{y} \in C \supseteq Y$, we can write~$\bar{y} = y^* + \sum_{r \in R} \mu_r \, r$, where~$\mu_r \geq 0$, for all~$r \in R$. Hence,~$\gamma^\T \bar{y} = \gamma^\T y^* + \sum_{r \in R} \mu_r (\gamma^\T r) \geq \gamma^\T y^*$, where the last inequality follows because~$\sigma_C(\gamma)$ is finite, so~$\gamma^\T r \geq 0$, for every~$r \in R$. Since~$y^*$ is a point in~$Y \subseteq C$ that attains the lower bound of~$\gamma^\T y^*$, we conclude that~$\sigma_C(\gamma) = \sigma_Y(\gamma) = \gamma^\T y^*$.
\end{proof}

In Remark~\ref{remark:simplex}, we illustrate how we will construct corners in our experiments.

\begin{remark}
Suppose~$Y = \{y \in \R^m : A y = b, y \geq 0\}$ is a polyhedron in standard equality form. For each~$j \in [m]$, we use~$A_j$ to denote the~$j$-th column of~$A$ (so~$Ay = \sum_{j \in [m]} A_j y_j$) and for a set~$J \subseteq [m]$, we use~$A_J$ to denote the submatrix of~$A$ containing exactly the set of columns~$\{A_j\}_{j \in J}$. Suppose that~$B$ is an optimal basis for the problem of minimizing~$\gamma^\T y$ over~$Y$. Let~$N = [m] \setminus B$ and rewrite~$A y = b$ as the system~$A_B  y_B + A_N  y_N = b$. Define~$\bar{b} \defeq (A_B)^{-1} b$ and~$\bar{a}^j \defeq (A_B)^{-1} A_j$, for each~$j \in N$. Thus, we rewrite~$Ay = b$ as~$y_B = \bar{b} - \sum_{j \in N} \bar{a}^j  y_j$. By the choice of~$B$,~$y^* = (y^*_B, y^*_N) = (\bar{b}, \mathbf{0})$ is an optimal basic feasible solution. Each nonbasic variable~$j \in N$ gives a ray~$r^j \in \R^m$ with entries
$$
r^j_i =
\begin{cases}
-\bar{a}^j_i, & \text{if } i \in B, \\
1, & \text{if } i = j, \\
0, & \text{otherwise.} \\
\end{cases}
$$
Since each nonbasic variable has a nonnegative reduced cost, the set~$C = \{y^*\} + \cone(\{r^j\}_{j \in N})$ is an optimal corner with respect to~$\sigma_{Y}(\gamma)$. \hfill \Halmos
\label{remark:simplex}
\end{remark}

Now that we have the definition of corners, we formalize what we mean by corner Benders' cuts.
\begin{definition}
    An inequality~$\alpha^\T w + \alpha_0 \theta \geq \beta$ is a \emph{corner Benders' cut} if it is valid for~$\epifY[C]$, where~$C$ is a corner of~$Y$.
    \label{definition:corner_benders_cut}
\end{definition}

\noindent
Note that since~$C$ is a relaxation of~$Y$, it follows from the definition of value functions that~$\epifY[C]$ contains~$\epifY$; in other words, corner Benders' cuts are also Benders' cuts (i.e., they are valid inequalities for the epigraph of~$\fY$).

\subsection{\mytitle{Finding an optimal corner from a Benders' cut}}
\label{subsection:corner_benders}

Our strategy will be to find an optimal corner based on a Benders' cut.
To accomplish this, we prove Lemma~\ref{lemma:support}, which shows that
depending on the choice of the objective function, optimizing over a set~$\mathcal{Y} \subseteq \R^m$ is equivalent to optimizing over the epigraph~$\epifY[\mathcal{Y}] \subseteq \R^{p + 1}$. 

\begin{lemma}
    Let~$\mathcal{Y} \subseteq \R^m$. For every~$(\alpha, \alpha_0) \in \R^p \times \R_+$, it holds that~$$\sigma_{\mathcal{Y}}(Q^\T \alpha + \alpha_0 d) =  \sigma_{\epifY[\mathcal{Y}]} (\alpha, \alpha_0).$$
    \label{lemma:support}
\end{lemma}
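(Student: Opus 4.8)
The plan is to compute both sides directly from the definitions and to show they coincide by flattening a nested infimum. Writing out the right-hand side, $\sigma_{\epi(f_{\mathcal{Y}})}(\alpha,\alpha_0) = \inf\{\alpha^\T w + \alpha_0\theta : (w,\theta)\in\epi(f_{\mathcal{Y}})\}$, I would first exploit the hypothesis $\alpha_0 \geq 0$: for each fixed $w \in \dom(f_{\mathcal{Y}})$, the constraint $\theta \geq f_{\mathcal{Y}}(w)$ together with $\alpha_0 \geq 0$ means the objective decreases as $\theta$ decreases, so the inner minimization over $\theta$ is attained (or approached) at $\theta = f_{\mathcal{Y}}(w)$, contributing $\alpha^\T w + \alpha_0 f_{\mathcal{Y}}(w)$. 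This reduces the right-hand side to $\inf_{w \in \dom(f_{\mathcal{Y}})} (\alpha^\T w + \alpha_0 f_{\mathcal{Y}}(w))$.

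Next I would substitute the definition $f_{\mathcal{Y}}(w) = \inf\{d^\T y : y\in\mathcal{Y},\, Qy = w\}$ and use $\alpha_0 \geq 0$ once more to pull the nonnegative constant $\alpha_0$ inside the infimum over $y$, producing the nested infimum $\inf_{w\in\dom(f_{\mathcal{Y}})} \inf_{y\in\mathcal{Y},\, Qy=w} (\alpha^\T w + \alpha_0 d^\T y)$. On each feasible fiber the constraint $Qy = w$ permits the key algebraic substitution $\alpha^\T w = \alpha^\T Q y = (Q^\T\alpha)^\T y$, so the integrand becomes $(Q^\T\alpha + \alpha_0 d)^\T y$, which depends on $w$ only through the coupling $w = Qy$. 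Collapsing the two infima then finishes the argument: every $y\in\mathcal{Y}$ determines exactly one $w = Qy \in \dom(f_{\mathcal{Y}})$, and conversely every $(w,y)$ pair ranged over arises this way, so the iterated infimum equals $\inf_{y\in\mathcal{Y}} (Q^\T\alpha + \alpha_0 d)^\T y = \sigma_{\mathcal{Y}}(Q^\T\alpha + \alpha_0 d)$, which is the left-hand side.

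The main obstacle is the bookkeeping with the extended reals, so that each manipulation is a valid identity in $\R\cup\{\pm\infty\}$ and not merely for finite quantities. Three edge cases need care: (i) when $\mathcal{Y}=\emptyset$, in which case both sides are $+\infty$ and the intermediate sets are empty; (ii) when $\alpha_0 = 0$, where the reduction of the $\theta$-minimization rests on the mere nonemptiness of the set $\{\theta : \theta \geq f_{\mathcal{Y}}(w)\}$ rather than on attainment at $f_{\mathcal{Y}}(w)$; and (iii) values of $w$ with $f_{\mathcal{Y}}(w)\in\{-\infty,+\infty\}$, where I must justify the product $\alpha_0 f_{\mathcal{Y}}(w)$ and the interchange of infima through the general fact that an iterated infimum equals the joint infimum over the product index set. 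Once these cases are dispatched, the substitution $\alpha^\T w = (Q^\T\alpha)^\T y$ on the feasible fibers does the essential work, and equality follows.
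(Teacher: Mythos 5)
Your proposal is correct and follows essentially the same route as the paper's proof: expand the support of the epigraph, use $\alpha_0 \geq 0$ to collapse the infimum over $\theta$ onto $\theta = f_{\mathcal{Y}}(w)$, flatten the resulting nested infimum into a joint infimum over $(w,y)$, and substitute $\alpha^\T w = (Q^\T\alpha)^\T y$ via the constraint $Qy = w$. Your explicit treatment of the extended-real edge cases ($\mathcal{Y}=\emptyset$, $\alpha_0=0$, $f_{\mathcal{Y}}(w)=\pm\infty$) is somewhat more careful than the paper's, which handles the interchange of infima by a brief appeal to optimizing over subsets of variables, but the substance is identical.
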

\begin{proof}
Since~$\alpha_0 \geq 0$, we may write
\begin{align}
    \sigma_{\epifY[\mathcal{Y}]}(\alpha, \alpha_0) 
        &= \inf_{(w,\theta) \in \epifY[\mathcal{Y}]} \left\{ \alpha^\T w + \alpha_0 \theta \right\} \nonumber
        \\
        &= \inf_{w,\theta} \left\{ \alpha^\T w + \alpha_0 \theta \suchthat \theta \ge \inf_{y \in \mathcal{Y}} \{ d^\T y \suchthat Qy = w \} \right\} \nonumber
        \\
        &= \inf_{w} \left\{ \alpha^\T w + \alpha_0 \inf_{y \in \mathcal{Y}} \{ d^\T y \suchthat Qy = w \} \right\} \label{support_proof_eq1}
        \\
        &= \inf_{w, y} \left\{ \alpha^\T w + \alpha_0 (d^\T y) \suchthat Qy = w, y \in \mathcal{Y} \right\} \label{support_proof_eq2}
        \\
        &= \inf_{y \in \mathcal{Y}} \{\alpha^\T (Qy) + \alpha_0 (d^\T y) \} \nonumber
        \\
        &= \sigma_{\mathcal{Y}}(Q^\T \alpha + \alpha_0 d). \nonumber
\end{align}
\noindent
The fourth equality holds because optimizing first over~$w$ and then over~$y$ is equivalent to jointly optimizing over~$(w, y)$ (see Section 4.1.3, ``Optimizing over some variables'', in~\cite{boyd2004convex}). More explicitly, note that if we fix~$w$ to some arbitrary~$\bar{w} \in \dom(\fY[\mathcal{Y}])$, both~\eqref{support_proof_eq1} and~\eqref{support_proof_eq2} reduce to~$\alpha^\T \bar{w} + \alpha_0 \inf_{y \in \mathcal{Y}} \{ d^\T y \suchthat Qy = \bar{w} \} = \alpha^\T \bar{w} + \alpha_0 \fY[\mathcal{Y}](\bar{w})$.
\end{proof}

The condition that~$\alpha_0 \geq 0$ in Lemma~\ref{lemma:support} is required, as otherwise we might have that~$\sigma_{\mathcal{Y}}(Q^\T \alpha + \alpha_0  d)$ is finite while~$\sigma_{\epifY[\mathcal{Y}]} (\alpha, \alpha_0) = - \infty$. 

By Lemma~\ref{lemma:corner_equal_support}, we know that an optimal corner is a translated cone~$C$ that is a relaxation of polyhedron~$Y$
for which optimizing a given linear objective function attains the same optimal value.
The next theorem tells us that the epigraphs of~$\fY$ and~$\fY[C]$ also have a similar property. Although this result may seem straightforward, it is crucial to our approach, since it shows that, given a Benders' cut~$\alpha^\T w + \alpha_0 \theta \geq \beta$, one can simply optimize over~$Y$ to find a corner~$C$ such that~$\alpha^\T w + \alpha_0 \theta \geq \beta$ is valid for~$\epifY[C]$.

\begin{theorem}
    Let~$C$ be a corner of~$Y$. 
    A Benders' cut $\alpha^\T w + \alpha_0 \theta \geq \beta$ is valid for~$\epifY[C]$ if and only if~$C$ is an optimal corner with respect to~$\sigma_{Y} (Q^\T \alpha + \alpha_0 d)$. Moreover, there exists a finite set~$\mathcal{C}$ of corners of~$Y$ such that~$\epifY = \bigcap_{C' \in \mathcal{C}} \epifY[C']$.
    \label{thm:benders_corner}
\end{theorem}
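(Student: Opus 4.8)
The plan is to handle the two assertions separately, reducing both to the support-function identity of Lemma~\ref{lemma:support}. For the biconditional, I would set $\gamma \defeq Q^\T \alpha + \alpha_0 d$ and recall that $\alpha_0 \ge 0$ holds for any Benders' cut. The observation driving everything is that an inequality $\alpha^\T w + \alpha_0 \theta \ge \beta$ is valid for a set $S \subseteq \R^{p+1}$ precisely when $\sigma_S(\alpha, \alpha_0) \ge \beta$. Applying this with $S = \epifY[C]$ and invoking Lemma~\ref{lemma:support} with $\mathcal{Y} = C$, validity of the cut for $\epifY[C]$ becomes equivalent to $\sigma_C(\gamma) \ge \beta$; the same reasoning with $\mathcal{Y} = Y$ shows that, since the cut is a Benders' cut, $\sigma_Y(\gamma) \ge \beta$ is finite. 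For the ``if'' direction I would assume $C$ is an optimal corner and apply Lemma~\ref{lemma:corner_equal_support} to get $\sigma_C(\gamma) = \sigma_Y(\gamma) \ge \beta$. For the ``only if'' direction, validity gives $\sigma_C(\gamma) \ge \beta > -\infty$, while the apex $y^* \in C$ yields $\sigma_C(\gamma) \le \gamma^\T y^* < +\infty$; hence $\sigma_C(\gamma)$ is finite and $C$ is an optimal corner by definition.

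For the second assertion, the key structural fact I would establish first is that $\epifY$ is a polyhedron: it is the image of the polyhedron $\{(y,\theta) : y \in Y,\ \theta \ge d^\T y\}$ under the linear map $(y,\theta) \mapsto (Qy, \theta)$, and projections of polyhedra are polyhedral. Consequently $\epifY$ admits a finite description $\epifY = \bigcap_{i=1}^k \{(w,\theta) : \alpha_i^\T w + \alpha_{0,i}\theta \ge \beta_i\}$. Each defining inequality is valid for $\epifY$, hence a Benders' cut with $\alpha_{0,i} \ge 0$, so the corresponding $\sigma_Y(\gamma_i)$, with $\gamma_i \defeq Q^\T \alpha_i + \alpha_{0,i} d$, is finite. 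Because $Y$ is pointed and $\gamma_i$ is bounded below on $Y$, an optimal vertex exists, and the basis-cone construction of Remark~\ref{remark:simplex} (or, in general, the tangent cone at such a vertex) produces an optimal corner $C_i$ with respect to $\sigma_Y(\gamma_i)$; by the biconditional just proved, cut $i$ is valid for $\epifY[C_i]$. I would then take $\mathcal{C} \defeq \{C_1, \dots, C_k\}$.

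It remains to verify the set equality. The inclusion $\epifY \subseteq \bigcap_i \epifY[C_i]$ is immediate from $C_i \supseteq Y$, which gives $\fY[C_i] \le \fY$ pointwise and hence $\epifY \subseteq \epifY[C_i]$. For the reverse inclusion, any $(\bar{w},\bar{\theta}) \in \bigcap_i \epifY[C_i]$ lies in each $\epifY[C_i]$, and since cut $i$ is valid for $\epifY[C_i]$ it satisfies $\alpha_i^\T \bar{w} + \alpha_{0,i}\bar{\theta} \ge \beta_i$ for every $i$; the defining system of $\epifY$ then places $(\bar{w},\bar{\theta})$ in $\epifY$. I expect the main obstacle to be the polyhedrality/finiteness step: the earlier discussion only records that $\epifY$ is closed and convex, so the argument hinges on upgrading this to polyhedrality via the projection characterization to guarantee finitely many defining inequalities, together with checking that an optimal vertex and its associated corner are available for each $\gamma_i$. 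Everything else is a direct bookkeeping of support functions.
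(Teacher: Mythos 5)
Your proof of the biconditional is essentially the paper's: both directions reduce to Lemma~\ref{lemma:support} plus Lemma~\ref{lemma:corner_equal_support}, with the ``only if'' direction resting on the finiteness of $\sigma_C(Q^\T\alpha+\alpha_0 d)$ obtained from validity (lower bound $\beta$) and nonemptiness of $C$ (upper bound $\gamma^\T y^*$). Where you genuinely diverge is the second assertion. The paper takes $\mathcal{C}$ to be the set of \emph{all} corners associated with feasible bases of $Y$, and proves the reverse inclusion by appealing to the fact (recorded in Section~\ref{subsection:benders}) that the closed convex set $\epifY$ is the intersection of the halfspaces given by its (possibly infinitely many) valid Benders' cuts, each of which is valid for some basis corner in $\mathcal{C}$ by the first part. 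You instead first upgrade closedness and convexity to polyhedrality of $\epifY$---as the image of $\{(y,\theta): y\in Y,\ \theta\ge d^\T y\}$ under $(y,\theta)\mapsto(Qy,\theta)$---extract a finite defining system, and attach one optimal corner to each defining inequality via pointedness of $Y$ and Remark~\ref{remark:simplex}. Your route requires the extra (standard, and correct) projection argument, but it buys a more economical $\mathcal{C}$, one corner per defining inequality rather than one per feasible basis, and it avoids reasoning about an infinite family of valid inequalities. Both arguments are sound; the one point worth making explicit in your write-up is that when $f_Y(w)$ is finite the infimum is attained (so the image of the lifted polyhedron really equals $\epifY$ rather than merely being dense in each fiber), which holds here because $Y$ is a rational polyhedron and the fiber problem is a feasible, bounded LP.
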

\begin{proof}
    Fix a Benders' cut~$\alpha^\T w + \alpha_0 \theta \geq \beta$ and note that, as we observed in Section~\ref{subsection:benders},~$\alpha_0 \geq 0$ and~$\beta \leq \sigma_{\epifY}(\alpha, \alpha_0)$. 
    If~$\alpha^\T w + \alpha_0 \theta \geq \beta$ is valid for~$\epifY[C]$, we have that
        $-\infty < \beta \leq \sigma_{\epifY[C]}(\alpha, \alpha_0) < +\infty$,
    where the last inequality follows from $C$ being nonempty. 
    Hence, by Lemma~\ref{lemma:support}, $\sigma_C(Q^\T \alpha + \alpha_0 d)$ is also finite. So, by Definition~\ref{definition:corner}, the set~$C$ is an optimal corner with respect to~$\sigma_Y(Q^\T \alpha + \alpha_0 d)$.
    
    Conversely, let~$C \subseteq \R^m$ be an optimal corner with respect to~$\sigma_{Y}(Q^\T \alpha + \alpha_0 d)$. Then
    \begin{equation*}
        \sigma_{\epifY}(\alpha, \alpha_0) \overset{\text{Lemma~\ref{lemma:support}}}{=} \sigma_Y(Q^\T \alpha + \alpha_0 d) \overset{\text{Lemma~\ref{lemma:corner_equal_support}}}{=} \sigma_{C}(Q^\T \alpha + \alpha_0 d) \overset{\text{Lemma~\ref{lemma:support}}}{=} \sigma_{\epifY[C]}(\alpha, \alpha_0),
    \end{equation*}
    which proves that~$\beta \leq \sigma_{\epifY[C]}(\alpha, \alpha_0)$, so~$\alpha^\T w + \alpha_0 \theta \geq \beta$ is valid for~$\epifY[C]$.
    
    To prove the second part of the statement, let~$\mathcal{C}$ be the set of corners associated with every feasible basis of~$Y$ (see, for example, Remark~\ref{remark:simplex}). It is clear that~$\mathcal{C}$ is finite. Moreover, for every~$C' \in \mathcal{C}$, we have~$\epifY \subseteq \epifY[C']$. Indeed, since~$Y \subseteq C'$, for any~$(w,\theta)\in \epifY$, Definition~\ref{definition:value_function} implies that~$\theta \geq \fY(w) \geq \fY[C'](w)$, meaning that~$(w,\theta)\in \epifY[C']$. This argument shows that~$\epifY \subseteq \bigcap_{C' \in \mathcal{C}} \epifY[C']$. But we have just proven that any valid inequality~$\alpha^\T w + \alpha_0 \theta \geq \beta$ for $\epifY$ is also valid for~$\epifY[C']$, where~$C'$ is an optimal corner with respect to~$\sigma_{Y}(Q^\T \alpha + \alpha_0 d)$. In particular, by choosing the basis appropriately, we may assume that~$C'$ belongs to the set~$\mathcal{C}$, proving the reverse inclusion.
\end{proof}

\subsection{\mytitle{Separating corner Benders' cuts via polarity}}
\label{subsection:corner_separation}

Throughout this subsection, we fix a corner~$C = \{y^*\} + \cone(R)$, and we consider the problem of generating strong valid inequalities for~$\epifY[C]$. Recall that an \textit{inner description} (also called~$\mathcal{V}$-polyhedral, see~\cite{ziegler2012lectures} and~\cite{prlp}) of a polyhedral set is given by the Minkowski sum of the convex hull of a set of points with the conical hull of a set of rays. Since we have access to an inner description of~$C$, we immediately obtain an inner description of~$\epifY[C]$.

\begin{lemma}
    Let~$C = \{y^*\} + \cone(R)$ be a corner of~$Y$ with~$R$ finite and let~$(w^*, \theta^*) = (Qy^*, d^\T y^*)$. Then~$$\epifY[C] = \{(w^*, \theta^*)\} + \cone(\{(Qr, d^\T r)\}_{r \in R} \cup \{(\mathbf{0}, 1)\}).$$
    \label{lemma:epigraph_cone}
\end{lemma}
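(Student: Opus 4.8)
The plan is to prove the set equality by double inclusion, after first recording the elementary but key equivalence that drives everything: a pair $(w,\theta)$ lies in $\epifY[C]$ if and only if there exists $y \in C$ with $Qy = w$ and $d^\T y \le \theta$. The backward direction is immediate from Definition~\ref{definition:value_function}, since such a $y$ witnesses $\fY[C](w) \le d^\T y \le \theta$. For the forward direction I would argue that whenever $\theta \ge \fY[C](w)$, the infimum defining $\fY[C](w)$ is finite or $-\infty$ (it cannot be $+\infty$, as $\theta$ is finite); in the $-\infty$ case some feasible $y$ already has $d^\T y \le \theta$, and in the finite case I will invoke the fact that $C = \{y^*\} + \cone(R)$ with $R$ finite is a polyhedron, so that the linear program $\inf\{d^\T y : y \in C,\, Qy = w\}$, being feasible and bounded below, attains its optimum at some $y \in C$ with $d^\T y = \fY[C](w) \le \theta$.

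Writing $S \defeq \{(w^*,\theta^*)\} + \cone(\{(Qr, d^\T r)\}_{r \in R} \cup \{(\mathbf{0},1)\})$ for the right-hand side, both inclusions then reduce to translating between the conic representation of a point of $S$ and the representation $y = y^* + \sum_{r \in R} \mu_r r$ (with $\mu_r \ge 0$) of a point of $C$. For $S \subseteq \epifY[C]$: a generic point of $S$ has the form $(w^*,\theta^*) + \sum_{r} \mu_r (Qr, d^\T r) + \lambda(\mathbf{0},1)$ with $\mu_r, \lambda \ge 0$; setting $y = y^* + \sum_r \mu_r r \in C$ gives $Qy = w$ and $d^\T y = \theta - \lambda \le \theta$, so the equivalence above places the point in $\epifY[C]$. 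For $\epifY[C] \subseteq S$: given $(w,\theta) \in \epifY[C]$, the equivalence supplies $y = y^* + \sum_r \mu_r r \in C$ with $Qy = w$ and $d^\T y \le \theta$; then $w = w^* + \sum_r \mu_r Qr$, and taking $\lambda \defeq \theta - d^\T y \ge 0$ expresses $(w,\theta) = (w^*,\theta^*) + \sum_r \mu_r(Qr, d^\T r) + \lambda(\mathbf{0},1) \in S$. The roles of the generators are thus transparent: the rays $(Qr, d^\T r)$ are the image of $\cone(R)$ under the linear map $y \mapsto (Qy, d^\T y)$, while the extra generator $(\mathbf{0},1)$ encodes the epigraph slack $\theta - d^\T y$.

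The only genuinely delicate point is the attainment used in the forward direction of the key equivalence: I must rule out a finite infimum that is not achieved, and this is exactly where finiteness of $R$ (hence polyhedrality of $C$) is essential, since a bounded linear program over a polyhedron always attains its optimum. It is worth noting that the statement holds even when $\fY[C]$ takes the value $-\infty$ on part of its domain: in that case $\cone(\{(Qr,d^\T r)\}_{r \in R})$ contains a vector $(\mathbf{0}, c)$ with $c < 0$, which together with $(\mathbf{0},1)$ makes $S$ contain an entire vertical line through each of its points, matching the fact that $\epifY[C]$ then also contains a vertical line over such $w$; the unified equivalence handles this degenerate case without a separate argument.
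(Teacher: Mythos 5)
Your proof is correct and follows essentially the same route as the paper's: the paper writes the identity as a chain of set equalities whose crucial step replaces the infimum by a minimum because $\{y \in C : Qy = w\}$ is polyhedral (this is exactly your attainment argument, relying on finiteness of $R$), and then expands the inner description of $C$ to read off the generators, which is your translation between the two conic representations. Your explicit treatment of the case $\fY[C](w) = -\infty$ is a welcome extra detail that the paper's second equality glosses over, but it does not change the substance of the argument.
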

\begin{proof}
    By the inner description of~$C$ and the definition of the epigraph of~$\fY[C]$,
    \begin{align*}
        \epifY[C] = & \{(w, \theta) \suchthat \theta \geq \inf_{y \in C} \{d^\T y : Qy = w\} \} \\
        = & \{(w, \theta) \suchthat \theta \geq \min_{y \in C} \{d^\T y : Qy = w\}, w \in \dom(\fY[C]) \} \\
        = & \{(Qy, d^\T y) + \mu_0 (\mathbf{0}, 1) \suchthat y \in C, \mu_0 \geq 0\} \\
        = & \left\{ \left(Q y^* + \sum_{r \in R} \mu_r (Qr), d^\T y^* + \mu_0 + \sum_{r \in R} \mu_r (d^\T r) \right) \suchthat \mu_i \geq 0, ~\forall i \in R\cup\{0\} \right\},
    \end{align*}
    where the second equality holds since~$R$ is finite, so for every~$w \in \dom(\fY[C])$, the set~$\{y \in C : Qy = w\}$ is polyhedral.
\end{proof}

The finiteness of~$R$ in Lemma~\ref{lemma:epigraph_cone} is required, since otherwise we may consider the following example. Assume that~$(w^*, \theta^*) = (0, 0) \in \R^2$ and~$R = \{(y_1, y_2, y_1 / y_2)\}_{y_1, y_2 \in [1, +\infty)}$, with~$Qr = y_1$ and~$d^\T r = y_1 / y_2$, for each~$r = (y_1, y_2, y_1 / y_2) \in R$. For any~$\bar{y}_1 \in [1, +\infty)$, we have~$\inf_{y \in C} \{d^\T y : Qy = \bar{y}_1 \} = 0$, so~$(\bar{y}_1, 0) \in \epifY[C]$. However, since~$d^\T r > 0$ for every~$r \in R$, the point~$(\bar{y}_1, 0)$ does not belong to~$\{(w^*, \theta^*)\} + \cone(\{(Qr, d^\T r)\}_{r \in R} \cup \{(\mathbf{0}, 1)\})$. 

Suppose now that we are given a candidate point~$(\bar{w}, \bar{\theta})$ that we wish to separate from~$\epifY[C]$. With Lemma~\ref{lemma:epigraph_cone} in hand, we can do this by searching for an infeasibility certificate for the system~
\begin{equation}
    \min_{\mu \geq 0} \left\{0 \suchthat \sum_{r \in R} (Qr) \mu_r = \bar{w},~\sum_{r \in R} (d^\T r) \mu_r \leq \bar{\theta}\right\}.
    \label{problem:cglp_simple}
\end{equation}
(In Appendix~\ref{appendix:corner_restricted_dual}, we show a connection between this approach and the method of~\cite{chen2022generating} to generate Lagrangian cuts by optimizing over restricted dual problems.) Notice, however, that whenever~\eqref{problem:cglp_simple} is infeasible, its dual is unbounded, and therefore one would typically have to choose among different normalizations~\citep{balas1993lift, louveaux2015strength, serra2020reformulating}. Here we propose to separate~$(\bar{w}, \bar{\theta})$ from~$\epifY[C]$ by normalizing the cut-generating linear program through known ideas from \textit{polarity}~\citep{Balas79, schrijver1998theory, cadoux2013reflections}.

To do so, we first translate~$\epifY[C]$ so that the origin lies in the relative interior of the set. In other words, we take a point~$(w', \theta')$ in the relative interior of~$\epifY[C]$ (this can be found by projecting a point in the relative interior of~$Y$) and we consider the set~$$E \defeq \epifY[C] - (w', \theta') = \{(w^*, \theta^*) - (w', \theta')\} + \cone(\{(Qr, d^\T r)\}_{r \in R} \cup \{(\mathbf{0}, 1)\}),$$ where~$w^* = Qy^*$ and~$\theta^* = d^\T y^*$. We then consider the following particular case of a \textit{reverse polar set of~$E$} (see~\cite{Balas79}):
\begin{equation}
    E^{\#} \defeq
    \left\{ (\alpha, \alpha_0) \in \R^{p + 1} \suchthat~
    \begin{aligned}
    & \alpha^\T (w^* - w') + \alpha_0  (\theta^* - \theta') \geq -1, & \\
    & \alpha^\T (Qr) + \alpha_0  (d^\T r) \geq 0, & \forall r \in R \\
    & \alpha_0 \geq 0,
    \end{aligned}
    \right\},
    \label{epigraph_rev_polar}
\end{equation}
and we refer to the inequalities associated with vectors in~$R$ as \textit{ray inequalities} of~$\revpolar$. 

Observe that, as pointed out by~\cite{Balas79}, the standard polar set of~$E$ is given by~$E^\circ = - \revpolar$ (see, for instance, Theorem 9.1 of~\cite{schrijver1998theory}). We use the reverse polar~$\revpolar$ simply to be consistent with our choice of expressing Benders' cuts in the form~$\alpha^\T w + \alpha_0 \theta \geq \beta$ rather than~$\alpha^\T w + \alpha_0 \theta \leq \beta$.

Let~$\alpha^\T w + \alpha_0 \theta \geq \beta$ be a valid inequality for~$E$ that is not an implicit equality. Since the origin lies in the relative interior of~$E$, we know that~$\beta < 0$ and by scaling, we may assume that~$\beta = -1$. It can now be easily verified that~$(\alpha, \alpha_0)$ belongs to~$\revpolar$. In fact, when~$E$ is full dimensional, it is known that~$\revpolar$ is bounded, with its extreme points corresponding to facets of~$E$ (see~\cite{schrijver1998theory} and~\cite{cadoux2013reflections}), which in turn translate into facets of~$\epifY[C]$. In our case, however, the set~$E$ is not necessarily full dimensional, and we need to consider the \textit{recession cone} of~$\revpolar$, denoted~$\recc(\revpolar)$. Given our use of the reverse polar~$\revpolar$ and this additional technical detail on its boundedness, we state Theorem~\ref{thm:polar} below. (This result follows from standard arguments on polarity, and we leave the proof to Appendix~\ref{appendix:proof_polar}.)

\begin{restatable}{theorem}{polar}
    Let~$C = \{y^*\} + \cone(R)$ be a corner of~$Y$.
    Let $(w^*, \theta^*) = (Qy^*, d^\T y^*)$,
    $(w', \theta')$ be a point in the relative interior of~$\epifY[C]$,
    and~$(\bar{w}, \bar{\theta})$ be an arbitrary point in~$\R^p \times \R$. 
    Define~$\revpolar$ as in~\eqref{epigraph_rev_polar} and consider the optimization problem
    \begin{equation}
        z^\circ \defeq \min_{\alpha,\alpha_0} \left\{ \alpha^\T (\bar{w} - w') + \alpha_0 (\bar{\theta} - \theta') : (\alpha, \alpha_0) \in \revpolar \right\}.
        \label{problem_polar}
    \end{equation}
    It holds that
    \begin{enumerate}[(a)]
        \item if~$z^\circ \geq -1$, then~$(\bar{w}, \bar{\theta})$ belongs to~$\epifY[C]$;
        \item if~$z^\circ = - \infty$ (i.e., Problem~\eqref{problem_polar} is unbounded), then we have a vector~$(\alpha, \alpha_0) \in \recc(\revpolar)$ such that~$\alpha^\T w + \alpha_0  \theta = \alpha^\T w' + \alpha_0  \theta'$ defines an implicit equality of~$\epifY[C]$ that is violated by~$(\bar{w}, \bar{\theta})$;
        \item otherwise,~$- \infty < z^\circ < -1$ and we have an extreme point~$(\alpha, \alpha_0)$ of~$\revpolar$ such that the inequality~$\alpha^\T w + \alpha_0 \theta \geq -1 + \alpha^\T w' + \alpha_0 \theta'$ defines a facet of~$\epifY[C]$ that is violated by~$(\bar{w}, \bar{\theta})$.
    \end{enumerate}
    \label{thm:polar}
\end{restatable}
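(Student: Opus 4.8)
The plan is to recognize Problem~\eqref{problem_polar} as the minimization of a linear functional over the reverse polar $\revpolar$ and to read off the three cases from classical polarity duality. Writing $q \defeq (w^* - w', \theta^* - \theta')$, $g_r \defeq (Qr, d^\T r)$ for $r \in R$, $g_0 \defeq (\mathbf{0},1)$, and $\bar e \defeq (\bar w - w', \bar\theta - \theta')$, Lemma~\ref{lemma:epigraph_cone} gives $E = q + K$ with $K \defeq \cone(\{g_r\}_{r\in R}\cup\{g_0\})$, and the objective of~\eqref{problem_polar} is exactly $\sigma_{\revpolar}(\bar e) = z^\circ$. Since $(w',\theta')$ lies in the relative interior of $\epifY[C]$, the origin lies in the relative interior of $E$, so $-q \in \operatorname{relint}(K)$; this single fact drives the whole argument. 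I would also note at the outset that $(\alpha,\alpha_0)=\mathbf{0}$ is always feasible for $\revpolar$, so $z^\circ \le 0 < +\infty$.

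The heart of part (a) is the reverse-bipolar identity $(\revpolar)^{\#} = E$, valid because $E$ is closed, convex, and contains the origin (see~\cite{schrijver1998theory,Balas79}). Unwinding the definition of the reverse polar, this says precisely that $\bar e \in E$ if and only if $\langle (\alpha,\alpha_0), \bar e\rangle \ge -1$ for every $(\alpha,\alpha_0)\in\revpolar$, i.e.\ if and only if $z^\circ \ge -1$. Translating back, $z^\circ \ge -1$ is equivalent to $(\bar w,\bar\theta)\in\epifY[C]$, which yields (a) together with the fact that $\bar e \notin E$ corresponds exactly to $z^\circ < -1$ (the regime of (b) and (c)).

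To separate the remaining two cases I would pass to the LP dual of~\eqref{problem_polar}. Dualizing the free variables $(\alpha,\alpha_0)$ against the three families of constraints gives
\[
  z^\circ = \max\Big\{-\lambda \suchthat \lambda q + \sum_{r\in R}\mu_r g_r + \mu_0 g_0 = \bar e,\ \lambda\ge 0,\ \mu\ge \mathbf{0}\Big\}.
\]
Hence $z^\circ$ is finite exactly when the dual is feasible, i.e.\ when $\bar e \in \cone(\{q\}\cup K)$; using $-q\in\operatorname{relint}(K)$ one checks that $\cone(\{q\}\cup K) = \operatorname{span}(K) = \operatorname{aff}(E)$, so $z^\circ=-\infty$ if and only if $\bar e \notin \operatorname{aff}(E)$. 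In that case the unbounded primal furnishes a recession direction of $\revpolar$; taking the component of $\bar e$ orthogonal to $\operatorname{aff}(E)$ produces a direction $(\alpha,\alpha_0)\in\recc(\revpolar)$ with $\alpha_0=0$ that annihilates $q$ and every $g_r$, hence defines an implicit equality of $\epifY[C]$, violated by $\bar e$ because $\bar e \notin \operatorname{aff}(E)$; this settles (b). For (c) we are left with $-\infty < z^\circ < -1$, i.e.\ $\bar e \in \operatorname{aff}(E)\setminus E$, the finite minimum is attained, and by the polar face correspondence the attaining vertex $(\alpha,\alpha_0)$ gives the facet-defining inequality $\alpha^\T w + \alpha_0\theta \ge -1 + \alpha^\T w' + \alpha_0\theta'$, violated since its value at $(\bar w,\bar\theta)$ equals $z^\circ < -1$.

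The main obstacle is exactly that $E$ need not be full dimensional, so $\revpolar$ carries the lineality space $(\operatorname{aff} E)^{\perp}$ and, taken literally, has no extreme points to invoke in (c). The careful step is therefore to quotient out this lineality: restricting to $L \defeq \operatorname{aff}(E)$, the set $E$ is full dimensional with $\mathbf{0}$ in its interior, so the relative polar is bounded and pointed, its vertices correspond to the facets of $E$, and---because in case (c) we already know $\bar e\in L$, so the objective is constant along the lineality directions---the minimum of~\eqref{problem_polar} is attained at a vertex of this pointed part, which lifts to a genuine facet of $\epifY[C]$. Making this reduction precise, and cleanly separating the implicit-equality directions (the lineality, feeding (b)) from the facet directions (the pointed part, feeding (c)), is where the real work lies; everything else is a direct application of the bipolar theorem and LP duality.
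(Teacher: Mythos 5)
Your parts (a) and (b) are correct, though they take routes that differ in detail from the paper's. For (a) you invoke the reverse-bipolar identity $(\revpolar)^{\#}=E$, which is legitimate here because $E$ is a closed convex set containing the origin; the paper instead argues by contradiction with a separating hyperplane, splitting into the subcases where the separating Fenchel cut is or is not tight at $(w',\theta')$. For (b), writing $\bar{e}\defeq(\bar{w}-w',\bar{\theta}-\theta')$, you characterize unboundedness via LP duality as $\bar{e}\notin\operatorname{aff}(E)$ and then build an implicit equality from the component of $\bar{e}$ orthogonal to $\operatorname{aff}(E)$. That proves existence, but the paper's argument is both shorter and better aligned with Algorithm~\ref{algorithm:solve_polar}: it takes whatever recession certificate $(\alpha,\alpha_0)\in\recc(\revpolar)$ the LP returns, notes that the homogenized constraints make $\alpha^\T w+\alpha_0\theta\geq\alpha^\T w'+\alpha_0\theta'$ valid for~$\epifY[C]$ and tight at the relative-interior point $(w',\theta')$, and concludes directly that it is an implicit equality---no orthogonal decomposition, and the conclusion applies to the vector the algorithm actually outputs.

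The genuine gap is in part (c), and you have named it yourself: when $E$ is not full dimensional, $\revpolar$ carries the lineality space $(\operatorname{aff}E)^{\perp}$, has no extreme points, and the vertex--facet correspondence you wish to cite is stated only for the full-dimensional case. Your proposed remedy---restrict to $L=\operatorname{aff}(E)$, where $\mathbf{0}$ is an interior point of $E$, and lift vertices of the restricted polar to facets of~$\epifY[C]$---is a reasonable plan, but you explicitly defer ``making this reduction precise,'' and that deferred step \emph{is} the content of (c): without it you have not shown that the minimizer yields a facet-defining inequality rather than merely a valid violated one. The paper closes this step without any dimension reduction: it writes $E$ in a minimal representation with facet inequalities indexed by $J$ and implicit equalities indexed by $J'$, observes that all of these coefficient vectors lie in $\revpolar$, expresses the valid inequality $\alpha^\T w+\alpha_0\theta\geq-1$ as a conic combination of them in which the multipliers over $J$ sum to one, and uses extremality of $(\alpha,\alpha_0)$ in $\revpolar$ to force exactly one facet inequality to appear. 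To finish your version you must either carry out the quotient argument in full or adopt this combination argument; you should also reconcile your (correct) observation about the lineality space with the literal wording of (c), which asserts an extreme point of $\revpolar$ itself.
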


\paragraph{\textbf{\mytitle{Row generation of ray inequalities.}}}
We solve \eqref{problem_polar} via Algorithm~\ref{algorithm:solve_polar} (\solvereversepolar).
We adopt a row generation approach for the ray inequalities in $\revpolar$:
in each iteration, we solve a relaxed variant of \eqref{problem_polar}, which we call~$(\tsc{CGLP})$ in the algorithm, over constraints for only a partial set of rays.
This is because, due to the projection~$r \to (Qr, d^\T r)$, several of the ray inequalities in~$\revpolar$ may be redundant. For example, if~$r^1, r^2 \in R$ satisfy~$Qr^1 = Qr^2$ and~$0 < d^\T r^1 < d^\T r^2$, then~$(Qr^2, d^\T r^2)$ is not an extreme ray of~$\epifY[C]$. Consequently, the ray inequality for~$r^2$ in~$\revpolar$ is redundant. 

The $(\tsc{CGLP})$ that we will solve contains ray inequalities for two sets of rays: ``current'' rays $R'$ and ``warm-start'' rays $\tilde{R}$. In each iteration, the set~$R'$ will be augmented with rays from a set~$R''$ (line~\ref{alg_solve_polar:choose_rays} of \solvereversepolar), which contains the rays from~$R$ that correspond to violated ray inequalities with respect to the current solution~$(\bar{\alpha}, \bar{\alpha}_0)$. There are various strategies for selecting rays $R'$ from~$R''$ based on which ray inequalities we want to impose for Problem~\cglp. For example, one could choose to add a fixed number of inequalities with the maximum violation. We tailor this to our specific formulation of the vehicle routing problem with stochastic demands (see Section~\ref{subsection:implementation} for further details).

\begin{algorithm}
\textbf{Input:} Candidate solution~$(\bar{x}, \bar{\theta}) \in \R^{n + 1}$; point~$(w', \theta')$ in the relative interior of~$\epi(f_Y)$; a corner~$C = \{y^*\} + \cone(R)$ given as point~$y^* \in \R^p$ and finite set~$R \subseteq \R^p$; and set~$\tilde{R} \subseteq R$ of ``warm-start'' rays. \\
\textbf{Output:} Optimal value~$z^\circ$ of Theorem~\ref{thm:polar}; tuple~$(\rho, \rho_0, \beta)$ (representing a corner Benders' cut of the form~$\rho^\top w + \rho_0 \theta \geq \beta$); and updated set of ``warm-start'' rays.
\begin{algorithmic}[1]
\Procedure {SolveReversePolar}{$(\bar{x}, \bar{\theta}), (w', \theta'), y^*, R, \tilde{R}$}
    \State {Set~$(w^*, \theta^*) = (Q y^*, d^\top y^*)$ and~$\bar{w} = h - T \bar{x}$.}
    \State {\multiline{Set~$R' = \emptyset$ and start Problem~$\tsc{(CGLP)}$ with the linear program below.
    \begin{equation}
        \begin{aligned}
        \tsc{(CGLP)}~~~~~~z^\circ ~=~ & \min_{\alpha,\alpha_0} && \alpha^\top (\bar{w} - w') + \alpha_0 (\bar{\theta} - \theta') && \\
        & \textsuchthat{} && \alpha^\top (w^* - w') + \alpha_0  (\theta^* - \theta') \geq -1, &&~~~~(\xi) \\
        &&& \alpha^\top (Qr) + \alpha_0  (d^\top r) \geq 0, & \forall r \in R' \cup \tilde{R}, &~~~~(\mu_r) \\
        &&& \alpha_0 \geq 0. &&~~~~(\mu_0)
        \end{aligned}
        \nonumber
    \end{equation}} \label{alg_solve_polar:cglp}}
    \State {$(\bar{\alpha}, \bar{\alpha}_0) \gets (\mathbf{0}, 0)$}
    \Repeat
        \State {$R'' \gets \emptyset$}
        \State {Solve Problem~\cglp to get~$z^\circ$ and update~$(\bar{\alpha}, \bar{\alpha}_0)$ to the obtained extreme point/ray.}
        \For {$r \in R$} \label{alg_solve_polar:begin_for}
            \If {$\bar{\alpha}^\top (Qr) + \bar{\alpha}_0 (d^\top r) < 0$}
                \State {$R'' \gets R'' \cup \{r\}$}
            \EndIf
        \EndFor \label{alg_solve_polar:end_for}

        \If {$R'' \neq \emptyset$}
            \State \multiline{Choose some rays~$r \in R''$ and add them to~$R'$ (effectively adding the corresponding inequalities~$\alpha^\top (Qr) + \alpha_0 (d^\top r) \geq 0$ to Problem~\cglp). \label{alg_solve_polar:choose_rays} }
        \EndIf
    \Until{$R'' = \emptyset$}
    \If{$z^\circ \geq -1$}
        \State{\textbf{return} $((\mathbf{0}, 0, 0), z^\circ, \tilde{R})$.}
    \ElsIf{$z^\circ = -\infty$}
        \State{\textbf{return} $((\bar{\alpha}, \bar{\alpha}_0, \beta = \bar{\alpha}^\top w' + \bar{\alpha}_0 \theta'), z^\circ, \tilde{R})$.}
    \Else
        \State \multiline{Let~$(\bar{\xi}, \bar{\mu})$ be optimal dual multipliers associated with the current optimal solution~$(\bar{\alpha}, \bar{\alpha}_0)$.}
        \State {$\tilde{R}' \gets \{r \in R' : \bar{\mu}_r > 0 \}$. \label{alg_solve_polar:warm_start_cuts}} 
        \State{\textbf{return} $((\bar{\alpha}, \bar{\alpha}_0, \beta = -1 + \bar{\alpha}^\top w' + \bar{\alpha}_0 \theta'), z^\circ, \tilde{R} \cup \tilde{R}')$.}
    \EndIf
    \EndProcedure
\end{algorithmic}
\caption{\textsc{SolveReversePolar}}
\label{algorithm:solve_polar}
\end{algorithm}

The ``warm-start'' ray set~$\tilde{R}$ is passed as an argument to \solvereversepolar, which is repeatedly called as a subroutine in Algorithm~\ref{algorithm:separate_corner} (\separatecorner).
The procedure \separatecorner begins by using Theorem~\ref{thm:benders_corner} to build an optimal corner~$C$ from a Benders' cut~$\alpha^\T w + \alpha_0 \theta \geq \sigma_{\epifY}(\alpha, \alpha_0)$.
Then, in line~\ref{alg_separate_cuts:call-subroutine-solve-reverse-polar}, \solvereversepolar is called to find violated corner Benders' cuts with respect to~$\epifY[C]$.
The ``warm-start'' rays $\tilde{R}$ initialize Problem~\cglp with ray inequalities that were active for cuts generated in prior iterations, collected in \solvereversepolar at line~\ref{alg_solve_polar:warm_start_cuts}.

\begin{algorithm}
\textbf{Input:} Set~$\tilde{\mathcal{F}}$ that is a relaxation of the feasible region~$\mathcal{F}$; point~$y'$ that belongs to the relative interior of~$Y$; and vector~$(\alpha, \alpha_0) \in \R^{p + 1}$ (representing a Benders' cut of the form~$\alpha^\T w + \alpha_0 \theta \geq \sigma_{\epi(f_Y)}(\alpha, \alpha_0)$). \\
\textbf{Output:} Corner~$C$ of~$Y$ and finite set of vectors~$\Omega \subseteq \R^{p + 2}$ (where each element~$(\rho, \rho_0, \beta) \in \Omega$, represents a corner Benders' cut of the form~$\rho^\T w + \rho_0 \theta \geq \beta$).
\begin{algorithmic}[1]
\Procedure {SeparateCornerBendersCuts}{$\tilde{\mathcal{F}}, y', \alpha, \alpha_0$}
\State \multiline{Set~$C = \{y^*\} + \cone(R)$ to be an optimal corner w.r.t.~$\sigma_Y(Q^\T \alpha + \alpha_0 d)$ (for example, as described in Remark~\ref{remark:simplex}).}
\State {Set~$(w', \theta') = (Q y', d^\T y' + \varepsilon)$, where~$\varepsilon > 0$ is an arbitrary constant.}
\State {$\Omega \gets \emptyset$}
\State {$\tilde{\mathcal{R}} \gets \emptyset$}
\Repeat
    \State {\multiline{Solve problem~$\tsc{(M)}$ below to get a candidate solution~$(\bar{x}, \bar{\theta})$.
    \begin{equation}
        \begin{aligned}
        \tsc{(M)}~~~~& \min_{x,\theta} && c^\T x + \theta & \\
        & && (x, \theta) \in \tilde{\mathcal{F}}, & \\
        &&& \rho^\T (h - Tx) + \rho_0 \theta \geq \beta,~~~& \forall (\rho, \rho_0, \beta) \in \Omega.
        \end{aligned}
        \nonumber
    \end{equation}}}
    \State \label{alg_separate_cuts:call-subroutine-solve-reverse-polar} {$((\rho, \rho_0, \beta), z^\circ, \tilde{R}) \gets \solvereversepolar((\bar{x}, \bar{\theta}), (w', \theta'), y^*, R, \tilde{R})$}
    \If {$z^\circ = -\infty$}
        \State {$\mathcal{C} \gets \mathcal{C} \cup \{(\rho, \rho_0, \beta), (-\rho, -\rho_0, -\beta)\}$}
    \ElsIf {$z^\circ < -1$}
        \State {$\Omega \gets \Omega \cup \{(\rho, \rho_0, \beta)\}$}
    \EndIf
\Until {$z^\circ \geq -1$}
\State {\textbf{return} $\Omega$}
\EndProcedure
\end{algorithmic}
\caption{\textsc{SeparateCornerBendersCuts}}
\label{algorithm:separate_corner}
\end{algorithm}

While we cannot guarantee that the rays in the ``warm-start'' set~$\tilde{R}$ are extreme, we next show that, in a sense, at least these rays belong to the boundary of~$\epifY[C]$.

\begin{claim}
    Let~$((\bar{\alpha}, \bar{\alpha}_0, \beta), z^\circ, \tilde{R} \cup \tilde{R}')$ be the output of \solvereversepolar (with~$\tilde{R}'$ being set as in line~\ref{alg_solve_polar:warm_start_cuts} of Algorithm~\ref{algorithm:solve_polar}). Suppose that~$-\infty < z^\circ < -1$ and consider the facet~$F = \epifY[C] \cap \{(w, \theta) : \bar{\alpha}^\T w + \bar{\alpha}_0 \theta = \beta\}$. Then, for every~$r \in \tilde{R}'$,~$(w^* + Q r, \theta^* + d^\T r)$ belongs to~$F$.
\end{claim}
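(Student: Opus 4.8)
The plan is to verify the two defining conditions of the facet $F$ for the point $(w^* + Qr, \theta^* + d^\T r)$, namely that it lies in $\epifY[C]$ and that it satisfies the facet equation $\bar{\alpha}^\T w + \bar{\alpha}_0 \theta = \beta$ with equality. Membership in $\epifY[C]$ is immediate from the inner description in Lemma~\ref{lemma:epigraph_cone}: since $r \in \tilde{R}' \subseteq R' \subseteq R$, the vector $(Qr, d^\T r)$ is one of the generators of the epigraph cone, so $(w^*, \theta^*) + 1 \cdot (Qr, d^\T r)$ lies in $\{(w^*, \theta^*)\} + \cone(\{(Qr, d^\T r)\}_{r \in R} \cup \{(\mathbf{0}, 1)\}) = \epifY[C]$.

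For the facet equation I would first invoke complementary slackness. By definition of $\tilde{R}'$ in line~\ref{alg_solve_polar:warm_start_cuts}, the optimal dual multiplier $\bar{\mu}_r$ of the ray inequality indexed by $r$ in Problem~\cglp satisfies $\bar{\mu}_r > 0$; complementary slackness then forces that inequality to be tight, i.e., $\bar{\alpha}^\T (Qr) + \bar{\alpha}_0 (d^\T r) = 0$. Consequently,
\[
    \bar{\alpha}^\T (w^* + Qr) + \bar{\alpha}_0 (\theta^* + d^\T r) = \bar{\alpha}^\T w^* + \bar{\alpha}_0 \theta^*,
\]
so it remains only to show $\bar{\alpha}^\T w^* + \bar{\alpha}_0 \theta^* = \beta$, which amounts to saying that the apex $(w^*, \theta^*)$ of the cone $\epifY[C]$ lies on $F$.

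I would establish this last equality through the support function. The row-generation loop of \solvereversepolar terminates only when $R'' = \emptyset$, meaning the returned $(\bar{\alpha}, \bar{\alpha}_0)$ satisfies $\bar{\alpha}^\T (Qr') + \bar{\alpha}_0 (d^\T r') \geq 0$ for \emph{every} $r' \in R$, and line~\ref{alg_solve_polar:cglp} enforces $\bar{\alpha}_0 \geq 0$. Using Lemma~\ref{lemma:epigraph_cone}, every generator of $\epifY[C]$ thus has nonnegative inner product with $(\bar{\alpha}, \bar{\alpha}_0)$, so the infimum $\sigma_{\epifY[C]}(\bar{\alpha}, \bar{\alpha}_0)$ is finite and attained at the apex, giving $\sigma_{\epifY[C]}(\bar{\alpha}, \bar{\alpha}_0) = \bar{\alpha}^\T w^* + \bar{\alpha}_0 \theta^*$. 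Since we are in case~(c) of Theorem~\ref{thm:polar}, the inequality $\bar{\alpha}^\T w + \bar{\alpha}_0 \theta \geq \beta$ is facet-defining, hence supporting, so $\beta = \sigma_{\epifY[C]}(\bar{\alpha}, \bar{\alpha}_0) = \bar{\alpha}^\T w^* + \bar{\alpha}_0 \theta^*$. Combined with the displayed identity, this shows the point satisfies $\bar{\alpha}^\T w + \bar{\alpha}_0 \theta = \beta$, and together with its membership in $\epifY[C]$ we conclude $(w^* + Qr, \theta^* + d^\T r) \in F$.

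The main obstacle is precisely this apex-on-facet step: one must be careful to use the termination condition $R'' = \emptyset$, so that the ray inequalities hold for all of $R$ and the support is attained at the apex, rather than only for the partial set $R' \cup \tilde{R}$ present in the final \cglp. A tempting alternative is to derive $\bar{\alpha}^\T (w^* - w') + \bar{\alpha}_0 (\theta^* - \theta') = -1$ directly from complementary slackness on the constraint with multiplier $\xi$, but that route additionally requires arguing $\bar{\xi} > 0$, so the support-function argument is cleaner and self-contained.
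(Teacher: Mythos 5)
Your proof is correct and follows essentially the same route as the paper's: membership of $(w^* + Qr, \theta^* + d^\T r)$ in $\epi(f_C)$ via the inner description, and tightness of the ray inequality via complementary slackness on $\bar{\mu}_r > 0$. The only difference is that the paper simply asserts $(w^*, \theta^*) \in F$ without justification, whereas you supply the missing argument (termination with $R'' = \emptyset$ forces all ray inequalities to hold, so the support is attained at the apex, which must therefore lie on the facet); this is a worthwhile elaboration but not a different proof.
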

\begin{proof}
Fix~$r \in \tilde{R}'$. Since~$(Qr, d^\T r)$ is a recession direction of~$\epifY[C]$, we know that~$(w^* + Q r, \theta^* + d^\T r)$ belongs to~$\epifY[C]$. Since~$\bar{\mu}_r > 0$, we have by complementary slackness that~$\bar{\alpha}^\T (Qr) + \bar{\alpha}_0 (d^\T r) = 0$. Combining this with the fact that~$(w^*, \theta^*) \in F$, we conclude that~$\bar{\alpha}^\T (w^* + Qr) + \bar{\alpha}_0 (\theta^* + d^\T r) = \beta$.
\end{proof}

\subsection{\mytitle{Finding a corner that recovers the optimal bound}}
\label{subsection:corner_bounds}

The previous subsections contain the main message of this paper: given a Benders' cut, we find an optimal corner~$C$ using Theorem~\ref{thm:benders_corner} and we then separate corner Benders' cuts using Theorem~\ref{thm:polar} and Algorithms~\ref{algorithm:separate_corner} and~\ref{algorithm:solve_polar}. The goal of this subsection is to show that by generating corner Benders' cuts for a fixed~$\epifY[C]$, we can already recover the optimal bound~$z^*$ in Problem~\eqref{problem_master}. More precisely, we prove the following result.
\begin{proposition}
Let~$z^*$ be the optimal value of Problem~\eqref{problem_master} and let~$\hat{\alpha}$ be an optimal solution to
\begin{equation}
    \max_{\alpha \in \R^p} \left\{ - \alpha^ \T h + \sigma_{X}(c + T^ \T \alpha) + \sigma_{Y}(Q^\T \alpha + d)\right\}.
    \label{problem:lagrangian1}
\end{equation}
If~$C$ is an optimal corner with respect to~$\sigma_Y(Q^\T \hat{\alpha} + d)$, then it holds that
\begin{equation}
    \min_{x, \theta} \left\{c^\T x + \theta \suchthat x \in X, (h - Tx, \theta) \in \epifY[C] \right\} = z^*.
    \label{problem:corner_bound}
\end{equation}
\label{prop:strong_corner}
\end{proposition}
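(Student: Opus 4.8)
The plan is to prove the inequalities $z_C \le z^*$ and $z_C \ge z^*$ separately, writing $z_C$ for the left-hand side of~\eqref{problem:corner_bound}. The first is immediate from the corner relaxation: since $C \supseteq Y$, Definition~\ref{definition:value_function} gives $\fY[C] \le \fY$ pointwise, hence $\epifY \subseteq \epifY[C]$. Thus the feasible region of~\eqref{problem:corner_bound} contains $\feasreg$, and minimizing the same objective over a larger set can only decrease the value, yielding $z_C \le z^*$.

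For the reverse inequality, I would first recognize~\eqref{problem:lagrangian1} as the Lagrangian dual of~\eqref{problem:basic} obtained by relaxing the linking constraints $Tx + Qy = h$ into the objective with multiplier $\alpha$; the inner minimization then separates over $X$ and $Y$, producing exactly the dual function $g(\alpha) \defeq -\alpha^\T h + \sigma_{X}(c + T^\T \alpha) + \sigma_{Y}(Q^\T \alpha + d)$ maximized in~\eqref{problem:lagrangian1}. Because~\eqref{problem:basic} is a linear program with finite optimum, LP strong duality makes this partial Lagrangian dual tight, so $g(\hat\alpha) = z^*$. I expect this identity---that the partial Lagrangian dual of the LP has no duality gap---to be the one step requiring justification outside the tools already developed, so I would be careful here, noting in particular that $\sigma_{X}(c + T^\T\hat\alpha)$ and $\sigma_{Y}(Q^\T\hat\alpha + d)$ are finite at the optimizer, which keeps the relevant inner LPs bounded.

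The crux is then to exploit the single Benders' cut associated with the multiplier pair $(\hat\alpha, 1)$. Applying Lemma~\ref{lemma:support} with $\alpha_0 = 1$ gives $\sigma_{\epifY[C]}(\hat\alpha, 1) = \sigma_{C}(Q^\T\hat\alpha + d)$, and since $C$ is an optimal corner with respect to $\sigma_{Y}(Q^\T\hat\alpha + d)$, Lemma~\ref{lemma:corner_equal_support} identifies this value with $\sigma_{Y}(Q^\T\hat\alpha + d)$. Hence the inequality $\hat\alpha^\T w + \theta \ge \sigma_{Y}(Q^\T\hat\alpha + d)$ is valid for $\epifY[C]$, and after the substitution $w = h - Tx$ it is valid throughout the feasible region of~\eqref{problem:corner_bound}.

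Finally, I would take an arbitrary feasible $(x, \theta)$ for~\eqref{problem:corner_bound} and chain the estimates. Rearranging the cut gives $\theta \ge \sigma_{Y}(Q^\T\hat\alpha + d) - \hat\alpha^\T h + \hat\alpha^\T Tx$, so
\[
c^\T x + \theta \ge (c + T^\T\hat\alpha)^\T x - \hat\alpha^\T h + \sigma_{Y}(Q^\T\hat\alpha + d) \ge \sigma_{X}(c + T^\T\hat\alpha) - \hat\alpha^\T h + \sigma_{Y}(Q^\T\hat\alpha + d) = g(\hat\alpha) = z^*,
\]
where the second inequality uses $x \in X$ together with the definition of $\sigma_{X}$. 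This proves $z_C \ge z^*$, which combined with $z_C \le z^*$ establishes the claim.
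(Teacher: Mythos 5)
Your proof is correct and follows essentially the same route as the paper: the paper's two-line argument cites Theorem~\ref{thm:benders_corner} (to transfer the Lagrangian cut $\hat\alpha^\T w + \theta \ge \sigma_{\epifY}(\hat\alpha,1)$ to $\epifY[C]$) and Theorem~\ref{thm:lagrangian_cut} (to combine it with the $X$-cut and strong Lagrangian duality), and you simply unroll those citations, reproving the transfer via Lemmas~\ref{lemma:support} and~\ref{lemma:corner_equal_support} and the duality step $g(\hat\alpha)=z^*$ inline. The one step you flag as needing care---tightness of the partial Lagrangian dual---is exactly the step the paper justifies inside the proof of Theorem~\ref{thm:lagrangian_cut} using that $X$ and $Y$ are nonempty polyhedra, so nothing is missing.
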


Before we dive into the proof of Proposition~\ref{prop:strong_corner}, let us discuss the relevance of this result and its connections to the existing literature. It is immediate that the \textit{objective function cut}~$c^\T x + \theta \geq z^*$ recovers the optimal bound~$z^*$. Recall, however, that our overall goal is to accelerate branch-and-bound MIP solvers that repeatedly solve relaxations that take the form of the linear program shown in Problem~\eqref{problem_master}. In this sense, it is often detrimental to a MIP solver to impose the objective function cut at the root node of a branch-and-bound tree~\citep{gamrath2010experiments}. 
\cite{chen2024recovering} argue that an explanation for this poorer performance is that adding the objective function cut to the LP relaxation usually leads to an optimal face with almost the same dimension as the original polyhedron, leading to performance degradation due to dual degeneracy. In contrast, Proposition~\ref{prop:strong_corner} and Algorithm~\ref{algorithm:separate_corner} allow us to decompose the objective function cut into multiple corner Benders' cuts; and this might lead to lower-dimensional optimal faces. Indeed, as we later show in our computational experiments, when taking Problem~\eqref{problem_master} to be the root node of a branch-and-bound tree, the approach based on Proposition~\ref{prop:strong_corner} outperforms both the objective function cut and a single Benders' cut method inspired by the work of \cite{chen2024recovering} (see Theorem~\ref{thm:lagrangian_cut} next).

Let us now address the proof of Proposition~\ref{prop:strong_corner}. First, we show that, perhaps not surprisingly, separating a Fenchel cut for~$\feasreg$ (the feasible region of Problem~\eqref{problem_master}) amounts to separating a Fenchel cut for~$X$ and another Fenchel cut for~$\epifY$. (In Appendix~\ref{appendix:block_diagonal}, we further show that Theorem~\ref{thm:lagrangian_cut} can be easily adapted to the case where Problem~\eqref{problem:basic} has a ``block-diagonal'' structure.)

\begin{theorem}
    Let~$(\rho, \rho_0) \in \R^n \times \R_+$ and assume~$\sigma_{\feasreg}(\rho, \rho_0)$ is finite. Let~$\hat{\alpha} \in \R^p$ be optimal for the problem
    \begin{equation}
        \max_{\alpha \in \R^p} \left\{ - \alpha^ \T h + \sigma_{X}(\rho + T^ \T \alpha) + \sigma_{Y}(Q^\T \alpha + \rho_0  d)\right\}.
        \label{problem:lagrangian2}
    \end{equation}
    Then the following hold.
    \begin{enumerate}[(i)]
        \item~$(\rho + T^ \T \hat{\alpha})^\T x \geq \sigma_{X}(\rho + T^ \T \hat{\alpha})$ is valid for~$X$; \label{item:langrangian1}
        \item~$\hat{\alpha}^\T (h - Tx) + \rho_0  \theta \geq \sigma_{\epifY} (\hat{\alpha}, \rho_0)$ is valid for~$\epifY$; and \label{item:langrangian2}
        \item~$\rho^\T x + \rho_0  \theta \geq \sigma_{\feasreg}(\rho, \rho_0)$ is implied by the inequalities in items (i) and (ii). \label{item:langrangian3}
    \end{enumerate}
    \label{thm:lagrangian_cut}
\end{theorem}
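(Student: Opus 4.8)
The plan is to treat items (i) and (ii) as Fenchel cuts that are valid essentially by definition, and to reserve the real work for item (iii), where the two cuts are combined and the resulting right-hand side is matched with $\sigma_{\feasreg}(\rho,\rho_0)$ through Lagrangian strong duality. Item (i) is simply the Fenchel cut for $X$ with coefficient vector $\rho + T^\T\hat\alpha$ (Definition~\ref{definition:fenchel}); it is valid as soon as $\sigma_X(\rho + T^\T\hat\alpha)$ is finite, which holds because the optimum of~\eqref{problem:lagrangian2} is finite (it is bounded above by the finite value $\sigma_{\feasreg}(\rho,\rho_0)$, so each of its summands is finite at $\hat\alpha$). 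Item (ii) is the Fenchel cut for $\epifY$ with coefficient vector $(\hat\alpha, \rho_0)$, rewritten with $w = h - Tx$; since $\rho_0 \ge 0$, Lemma~\ref{lemma:support} gives $\sigma_{\epifY}(\hat\alpha, \rho_0) = \sigma_Y(Q^\T\hat\alpha + \rho_0 d)$, which is again finite at the optimum, so the cut is well defined.

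For item (iii), I would first rewrite $\sigma_{\feasreg}(\rho,\rho_0)$ as a primal linear program. Using the description~\eqref{eq:feasible_region} of $\feasreg$ together with $\rho_0\ge 0$, the infimum over $\theta$ is attained at $\theta = \fY(h - Tx)$, so that
$$\sigma_{\feasreg}(\rho,\rho_0) = \min_{x\in X,\, y\in Y}\{\rho^\T x + \rho_0\, d^\T y : Tx + Qy = h\}.$$
I would then observe that the objective of~\eqref{problem:lagrangian2} is exactly the Lagrangian dual function obtained by dualizing the linking constraints $Tx+Qy=h$ with multiplier $\alpha$: expanding $\min_{x\in X,\, y\in Y}\{\rho^\T x + \rho_0 d^\T y + \alpha^\T(Tx+Qy-h)\}$ separates over $x$ and $y$ and yields precisely $-\alpha^\T h + \sigma_X(\rho + T^\T\alpha) + \sigma_Y(Q^\T\alpha + \rho_0 d)$. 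The decisive step is to invoke LP strong duality (equivalently, the absence of a Lagrangian duality gap when only linear constraints of a linear program are relaxed) to conclude that the optimal value of~\eqref{problem:lagrangian2}, attained at $\hat\alpha$, equals the primal value $\sigma_{\feasreg}(\rho,\rho_0)$.

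With that identity in hand, the conclusion is a short computation: adding inequality (i), namely $\rho^\T x + \hat\alpha^\T Tx \ge \sigma_X(\rho + T^\T\hat\alpha)$, to inequality (ii), namely $\hat\alpha^\T h - \hat\alpha^\T Tx + \rho_0\theta \ge \sigma_{\epifY}(\hat\alpha,\rho_0)$, cancels the $\hat\alpha^\T Tx$ terms and gives
$$\rho^\T x + \rho_0\theta \ge -\hat\alpha^\T h + \sigma_X(\rho + T^\T\hat\alpha) + \sigma_{\epifY}(\hat\alpha,\rho_0).$$
Replacing $\sigma_{\epifY}(\hat\alpha,\rho_0)$ by $\sigma_Y(Q^\T\hat\alpha + \rho_0 d)$ via Lemma~\ref{lemma:support} makes the right-hand side equal to the objective of~\eqref{problem:lagrangian2} evaluated at $\hat\alpha$, which by the previous paragraph equals $\sigma_{\feasreg}(\rho,\rho_0)$. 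Hence the nonnegative combination of (i) and (ii), each with multiplier $1$, implies $\rho^\T x + \rho_0\theta \ge \sigma_{\feasreg}(\rho,\rho_0)$, as required.

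I expect the strong-duality identity to be the only nontrivial ingredient: weak duality gives the bound $\le \sigma_{\feasreg}(\rho,\rho_0)$ for free, but the reverse inequality is exactly what forces the combined cut to match the target right-hand side rather than a weaker value. I would justify it either by directly taking the LP dual of the primal form of $\sigma_{\feasreg}(\rho,\rho_0)$ and projecting out the dual variables internal to $X$ and $Y$, or by citing the standard fact that relaxing linear equality constraints of a linear program incurs no duality gap because $X$ and $Y$ are already convex. Finiteness of $\sigma_{\feasreg}(\rho,\rho_0)$ guarantees primal feasibility and boundedness, which ensures that an optimal $\hat\alpha$ exists and that all the support values appearing above are finite.
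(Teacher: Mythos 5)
Your proposal is correct and follows essentially the same route as the paper's proof: both rewrite $\sigma_{\feasreg}(\rho,\rho_0)$ as the primal LP over $X\times Y$ with the linking constraints, invoke Lagrangian/LP strong duality to identify its value with the optimum of~\eqref{problem:lagrangian2}, convert $\sigma_Y(Q^\T\hat\alpha+\rho_0 d)$ to $\sigma_{\epifY}(\hat\alpha,\rho_0)$ via Lemma~\ref{lemma:support}, and obtain (iii) by summing the two Fenchel cuts. Your added remarks on finiteness of the individual support values are a harmless elaboration the paper leaves implicit.
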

\proof
    By summing the inequalities in~\ref{item:langrangian1} and~\ref{item:langrangian2}, it suffices to show that~$\sigma_{\feasreg}(\rho, \rho_0) = -\hat{\alpha}^\T h + \sigma_{X}(\rho + T^ \T \hat{\alpha}) + \sigma_{\epifY} (\hat{\alpha}, \rho_0)$. Since~$X$ and~$Y$ are nonempty polyhedra, it follows from Lagrangian duality~\citep{boyd2004convex} that
    \begin{align*}
        \sigma_{\feasreg}(\rho, \rho_0) = &~\min_{x, y} && \rho^\T x + \rho_0  (d^ \T y) \\
        & && h - Tx = Qy, \\
        &&& (x, y) \in X \times Y, \\
        = & \max_{\alpha \in \R^p} ~ \min_{x, y}~~ && \rho^\T x + \rho_0  (d^ \T y) + \alpha^\T (Qy - (h - Tx)) \\
        &&& (x, y) \in X \times Y, \\
        = &~\max_{\alpha \in \R^p} \left\{ - \alpha^ \T h + \sigma_{X}(\rho + T^ \T \alpha) + \sigma_{Y}(Q^\T \alpha + \rho_0  d)\right\} \span\span \\
        = &~ -\hat{\alpha}^\T h + \sigma_{X}(\rho + T^ \T \hat{\alpha}) + \sigma_{Y}(Q^\T \hat{\alpha} + \rho_0  d) \span \span & \text{(by optimality of~$\hat{\alpha}$)} \\
        = &~ - \hat{\alpha}^\T h + \sigma_{X}(\rho + T^ \T \hat{\alpha}) + \sigma_{\epifY} (\hat{\alpha},\rho_0). \span \span & \text{(by Lemma~\ref{lemma:support})}
        \tag*{\Halmos}
    \end{align*}

Based on Theorem~\ref{thm:lagrangian_cut}, we henceforth say that a \textit{Lagrangian cut} is an inequality of the form~$\hat{\alpha}^\T (h - Tx) + \theta \geq \sigma_{\epifY} (\hat{\alpha}, 1)$ where~$\hat{\alpha} \in \R^p$ is optimal for the Lagrangian dual problem~\eqref{problem:lagrangian2}, with~$\rho = c$ and~$\rho_0 = 1$. Therefore, a Lagrangian cut implies the objective function cut~$c^\T x + \theta \geq \sigma_{\feasreg}(c,1) = z^*$. Proving Proposition~\ref{prop:strong_corner} is now immediate.

\begin{myproof}{of Proposition~\ref{prop:strong_corner}}
    Since~$\epifY[C]$ is a relaxation of~$\epifY$, the optimal value of the problem in the left-hand side of~\eqref{problem:corner_bound} is at most~$z^*$. For the converse, apply Theorem~\ref{thm:benders_corner} with the Lagrangian cut~$\hat{\alpha}^\T (h - Tx) + \theta \geq \sigma_{\epifY} (\hat{\alpha}, 1)$.
\end{myproof}

\section{\mytitle{Network flow structure and the Dantzig-Wolfe bound}}
\label{section:vrpsd}

In this section, we focus on the special case where the inequalities defining~$Y$ arise from a network flow formulation. We start with some basic definitions. As usual~\citep{cook2011combinatorial, ahuja1988network}, 
given a finite set $\mathcal{S}$, we index vectors by the elements of the set:
for a set $\mathbb{D}$ (e.g., $\R$ or $\Z$),
we denote by $\mathbb{D}^\mathcal{S}$ a vector indexed by $\mathcal{S}$ where each entry has domain $\mathbb{D}$.

\begin{definition}
    A \textit{network} is a tuple \( (\mathcal{N}, b, u) \), where $ \mathcal{N} $ is a directed graph identified by a tuple $(\mathcal{V}, \mathcal{A})$ of its nodes $\mathcal{V}$ and arcs $\mathcal{A}$; \( b \in \Z^{\mathcal{V}} \) specifies the demand/supply values at each node~$v \in \mathcal{V}$; and~$u \in \Z^{\mathcal{A}}$ is a vector of arc capacities. Let $\nodearcmatrix \in \R^{{\mathcal{V}} \times \mathcal{A}}$ denote the node-arc incidence matrix of~$\mathcal{N}$, that is, for each~$v \in \mathcal{V}$ and~$a \in \mathcal{A}$, the entry~$\nodearcmatrix_{v,a}$ has value~$+ 1$ if~$v$ is the head of~$a$;~$-1$ if~$v$ is the tail of~$a$; and~$0$ otherwise.
    \label{definition:network}
\end{definition}

\begin{definition}
    Let~$(\mathcal{N} = (\mathcal{V}, \mathcal{A}), b, u)$ be a network.
    Then $\mathcal{Y} \subseteq \R^m$ is a \emph{network flow polytope associated with~$(\mathcal{N}, b, u)$} if it holds that~$\mathcal{Y} = \left\{ y \in\R^{\mathcal{A}}_+ : \nodearcmatrix y = b, y \leq u \right\}$.
    \label{def:network_flow_polytope}
\end{definition}
\noindent
Without loss of generality, we will assume that every network~$\mathcal{N}$ mentioned henceforth is connected.

In the remainder of this section, we consider value functions $\fY[\mathcal{Y}]$, where $\mathcal{Y}$ is a network flow polytope associated with a network~$(\mathcal{N}, b, u)$. Our goal here is threefold: 
    in Section~\ref{subsection:path_flow}, we apply the flow-decomposition theorem to establish a connection between such value functions and DW reformulations; 
    in Section~\ref{subsec:recover-dw-bound}, we observe that corner Benders' cuts can recover the bound from the DW formulation; and
    in Sections~\ref{subsection:vrpsd} and~\ref{subsection:corner_example}, we formulate a DW-based linear programming relaxation for the Vehicle Routing Problem with Stochastic Demands (VRPSD) in the form of Problem~\eqref{problem_master}, 
    and we offer a simple VRPSD example to illustrate the generation of corner Benders' cuts.

\subsection{\mytitle{Equivalence between path-flow and arc-flow formulations}}
\label{subsection:path_flow}

Let~$\Pi$ be a finite set of vectors, which we interpret as the incidence vectors of feasible ``patterns'' for a combinatorial optimization problem, e.g., a route for a vehicle routing problem. For simplicity, we assume that~$\Pi$ is a subset of~$\R^n$, but one can check that the following reasoning also holds for the general case of~$\Pi \subseteq \R^p$ (as long as we later choose matrices~$Q$ and~$T$ appropriately). Next, we follow the presentation in~\cite{de2022arc} to show that network flows can be used to optimize over the set~$X \cap (k \cdot \conv(\Pi))$, where~$k \in \Z_{++}$ denotes the number of patterns to be selected and~$k \cdot \conv(\Pi) \defeq \{k \pi : \pi \in \conv(\Pi)\}$. For each vector~$\pi \in \Pi$, let~$\hat{d}_\pi$ be a rational number representing the ``cost'' of using the pattern associated with~$\pi$. 
We can minimize a linear function over $x \in X \cap (k \cdot \conv(\Pi))$ as follows.
\begin{equation}
\begin{aligned}
    \hat{z} \defeq \min_{x, \lambda} ~~& c^{\T} x + \sum_{\pi \in \Pi} \hat{d}_\pi \cdot \lambda_\pi & %
    \\
    & x = \sum_{\pi \in \Pi} \pi \cdot \lambda_\pi, & %
    \\
    & \sum_{\pi \in \Pi} \lambda_\pi = k, %
    \\
    & x \in X, & %
    \\
    & \lambda_\pi \geq 0, & \forall \pi \in \Pi. & 
\end{aligned}
\label{problem_dw}
\end{equation}

\noindent
Typically, Problem~\eqref{problem_dw} is solved in a \emph{column generation} framework,
where the variables $\lambda_\pi$ are added iteratively via a \emph{pricing subproblem}.

One can always construct an acyclic directed graph~$\mathcal{N} = (\mathcal{V}, \mathcal{A})$, with source vertex~$s \in \mathcal{V}$ and sink vertex~$t \in \mathcal{V} \setminus \{s\}$, such that there exists a one-to-one mapping between the~$s-t$ paths in~$\mathcal{N}$ and the elements of~$\Pi$. Furthermore, we can set~$d \in \R^{\mathcal{A}}$ and~$Q \in \R^{n \times \mathcal{A}}$ so that, for each~$\pi \in \Pi$, with corresponding~$s-t$ path~$P$, we have that~$\sum_{a \in P} d_a = \hat{d}_\pi$ and~$\sum_{a \in P} Q_a = \pi$ (the summation is over the arcs in~$P$ and~$Q_a \in \R^n$ is the column of~$Q$ indexed by~$a \in \mathcal{A}$). This can be accomplished trivially by setting~$\mathcal{A}$ to be made of~$|\Pi|$ parallel arcs between~$s$ and~$t$. More interestingly, whenever optimizing a linear function over~$\Pi$ can be done with a \emph{dynamic programming} (DP) algorithm~$\mb{A}$, we can set the vertices (respectively, arcs) of~$\mathcal{N}$ to correspond to the DP states (respectively, DP state transitions) of algorithm~$\mb{A}$. In this way, the~$s-t$ paths in~$\mathcal{N}$ correspond to feasible outputs of~$\mb{A}$, and therefore, to elements of~$\Pi$ (see~\cite{de2022arc} for more details and examples).

The previous argument shows that we can interpret Problem~\eqref{problem_dw} as a \textit{path-flow} formulation, where each variable~$\lambda_\pi$ corresponds to a flow variable over an~$s-t$ path in~$\mathcal{N}$. Since~$\mathcal{N}$ is acyclic, we can apply the flow decomposition theorem (see Section 3.5 of~\cite{ahuja1988network}) to map each path-flow solution~$\lambda \in \R^{\Pi}$ to a corresponding \textit{arc-flow} solution~$y \in \R^{\mathcal{A}}$ and vice versa. 
For any~$v \in \mathcal{V}$, let~$\allones_v \in \R^{\mathcal{V}}$ be the characteristic vector of~$\{v\}$. %
It follows that, with 
$Y$ representing the network flow polytope associated with~$\mathcal{N}$,
the optimal value $\hat{z}$ of Problem~\eqref{problem_dw} can be equivalently obtained by
\begin{equation}
    \min_{x,y} \left\{ 
        c^{\T} x + d^\T y 
        \suchthat
        x = Qy,\,
        x \in X,\,
        y \in Y = \{y \in \R^{\mathcal{A}}_+ \suchthat \nodearcmatrix y = k \cdot \allones_t - k \cdot \allones_s\}
    \right\}.
\label{problem_flow}
\end{equation}
Hence, with $h = 0$ and $T$ as the negative identity matrix, via Problem~\eqref{problem_master}, we can attain the same bound $\hat{z}$, even if we are only allowed to use network flow polytopes for $Y$.
As a result, in what follows, we return to the $x$ variable space because $x = w$ in our setting.

We also mention in passing that this path-flow/arc-flow equivalence also relates our work with important studies on cut generation via {decision diagrams}~\citep{becker2005, behle_thesis, tjandraatmadja2019target, castro2022combinatorial, lozano2022binary}, but in Section~\ref{subsec:recover-dw-bound} we position our contributions in the context of DW formulations.

\subsection{\mytitle{Recovering the DW bound}}
\label{subsec:recover-dw-bound}

Consider the setting where the set~$\Pi$ is given by the set of extreme points of a polytope. In this case, Problem~\eqref{problem_dw} encompasses different modeling paradigms for DW formulations, and the value~$\hat{z}$ corresponds to the so-called \emph{DW bound}. For example, when~$\hat{d} \equiv 0$, then the problem falls in a format similar to the \textit{explicit master problem} in~\cite{de2003integer} and also the general DW decomposition formulation (such as in~\cite{chen2024recovering}). In addition, one may use $\hat{d}_\pi$ to model a ``second-stage'' cost associated with the~$\lambda$ variables; this situation occurs, for instance, in column generation formulations for two-stage stochastic programming problems~\citep{christiansen2007, GAUVIN2014141, silva2006solving}.

The case of DW formulations is relevant because it is well known that, for many combinatorial optimization problems, DW bounds can be significantly stronger than the bounds arising from LP relaxations of natural formulations in lower-dimensional spaces~\citep{pessoa2020generic, de2022arc}. However, algorithms based on DW formulations typically use a branch-and-price scheme, which is not natively supported by many commercial MIP solvers. Furthermore, as highlighted in~\cite{uchoa2024optimizing}, most MIP solvers have a native support for the addition of cuts, but not for the addition of the variables needed in a branch-and-price method. In this context, \cite{chen2024recovering} recently proposed an approach that recovers the DW bound with cuts in the original space of the variables. We argue next that our approach extends the results of~\cite{chen2024recovering} by providing multiple corner Benders' cuts that recover the DW bound. Our computational experiments show that this is indeed advantageous, allowing us to obtain state-of-the-art results for
a well-studied combinatorial optimization problem.

As discussed earlier, let $\mb{A}$ be a DP algorithm that solves the pricing subproblem with respect to Problem~\eqref{problem_dw} and~$\mathcal{N}$ be a corresponding network.
Setting~$Y$ as in Problem~\eqref{problem_flow}, and recalling that $x=w$ in our setting, yields
    $\hat{z} = \min_{x,\theta} \left\{c^{\T} x + \theta \suchthat x \in X, (x, \theta) \in \epifY \right\}$. 
Solving Problem~\eqref{problem:lagrangian1}, we obtain a Lagrangian cut $\alpha^\T x + \theta \geq \sigma_{\epifY}(\alpha, 1)$ , and by Theorem~\ref{thm:lagrangian_cut} with $(\rho,\rho_0) = (c, 1)$, this inequality implies the objective function cut $c^\T x + \theta \ge \sigma_{\feasreg}(c,1) = \hat{z}$.
This is essentially the same reasoning as the one devised by~\cite{chen2024recovering} to recover the DW bound via cuts (see also Appendix~\ref{appendix:block_diagonal} for the case of multiple ``blocks''). Proposition~\ref{prop:strong_corner} then shows that instead of recovering the DW bound with a single Lagrangian cut, we can recover this bound with multiple corner Benders' cuts. As we argued earlier in Section~\ref{subsection:corner_bounds}, this can be advantageous because it might reduce the dimension of an optimal face.

\subsection{\mytitle{Application: the vehicle routing problem with stochastic demands (VRPSD)}}
\label{subsection:vrpsd}

In this section, we introduce the combinatorial optimization problem that will be used in our computational experiments, namely the Vehicle Routing Problem with Stochastic Demands (VRPSD). The reason we introduce the problem at this point is so that we can give concrete examples
of some of the concepts that
have been discussed, which will hopefully help the reader to better understand them.

The VRPSD is a variant of the classic capacitated vehicle routing problem where, rather than deterministic, customer demands are stochastic and follow a given probability distribution. This problem has a very rich literature, with many studies in the last few years~\citep{gendreau201650th, florio2020, florio2022recent, hoogendoorn2023improved, parada2024disaggregated, ota2024hardness}. As usual, we model the VRPSD as a \textit{two-stage stochastic program}, where a recourse policy describes the recourse actions that
should be taken in the case of \textit{route failures}, i.e., when a violation of the vehicle capacity constraint is observed. The
objective function then combines the initial planned routes cost (\textit{first-stage cost}) and the
expected recourse cost (\textit{second-stage cost}). Next, we present a formal definition.

\subsubsection{\mytitle{Definition of the VRPSD}}
\label{subsection:vrpsd_definition}

Let~$G = (V = \{0\} \cup V_+, E)$ be a complete undirected graph. The vertex~$0$ represents the \textit{depot} and the set~$V_+$ indicates the \textit{customers}. The vector~$c \in \Q^E_{++}$ denotes the edge costs, $k \in \Z_{++}$ refers to the desired number of vehicles, and $C \in \Q_{++}$ is the vehicle capacity. Let~$q$ be a random vector over~$V_+$, where each entry~$q_v$ is a random variable corresponding to the demand of customer~$v \in V_+$. The vector~$q$ is governed by a probability distribution~$\mb{P}$, and we define~$\bar{q} \defeq \mb{E}[q]$. A \textit{route}~$R$ is a tuple of customers~$(v_1, \ldots, v_\ell)$ and~$R$ is \textit{elementary} if the customers in~$R$ are all distinct. We use~$q(R)$ to refer to the sum of random variables~$\sum_{i = 1}^\ell q_{v_i}$, and we say that route~$R$ is a \textit{q-route} if the expected total demand of the route is below capacity, i.e., if $\bar{q}(R) \defeq \mb{E}[q(R)] \leq C$. The notation~$c(R)$ denotes the \textit{first-stage cost of route~$R$}, i.e.,~$c(R) \defeq c_{0 v_1} + c_{0 v_\ell} + \sum_{j = 1}^{\ell - 1} c_{v_j v_{j + 1}}$ ($c(R) = 2 c_{0 v_1}$ if~$\ell = 1$). Moreover, with each route~$R$ we associate a random variable~$\mathcal{Q}(R)$ and the expectation~$\mb{E}[\mathcal{Q}(R)]$ denotes the (expected) \textit{second-stage cost of~$R$}.

The definition of the random variable~$\mathcal{Q}(R)$ depends on the choice of the recourse policy. In this work, we focus on the \textit{classical recourse policy}, that is, whenever a failure is observed, the recourse policy determines that the vehicle should execute a back-and-forth trip between the depot and the failed customer. In this context, \cite{dror89} show that, for any q-route~$R = (v_1, \ldots, v_\ell)$, the second-stage cost can be calculated with the following formula,
which computes the probability that $t$ returns to the depot are needed to satisfy customer $j$'s demand.
We use the shorthand $[i]$ for an integer $i \ge 0$ to denote the set $\{1,\ldots,i\}$ and define $[0] \defeq \emptyset$.
\begin{equation}
    \mb{E}[\mathcal{Q}(R)] 
        = 
        2 \sum_{j \in [\ell]} \sum_{t = 1}^{\infty} 
            \mb{P}\left[
                \sum_{i \in [j - 1]} q_{v_i} \leq t C < \sum_{i \in [j]} q_{v_i}
            \right] c_{0v_j}.
    \label{eq:expected_cost}
\end{equation}
The goal of the VRPSD is to find a set of~$k$ distinct elementary q-routes~$\{R_1, \ldots, R_k\}$ that forms a partition of the customers~$V_+$ and minimizes the sum~$\sum_{i \in [k]} (c(R_i) + \mb{E}[\mathcal{Q}(R_i)])$.

\newcommand{\subfigAcaption}{
  A set of q-routes~$\{R_1, R_2\}$ that minimizes the first-stage cost. Here we have that~$\sum_{i \in [2]} c(R_i) = 76$.
}
\newcommand{\subfigBcaption}{
  A solution~$\{R'_1, R'_2\}$ that is optimal for the VRPSD. The sum of the first and second-stage costs is~$\sum_{i \in [2]} (c(R'_i) + \mb{E}[\mathcal{Q}(R'_i)]) = 88$.
}
\begin{figure}[htb]
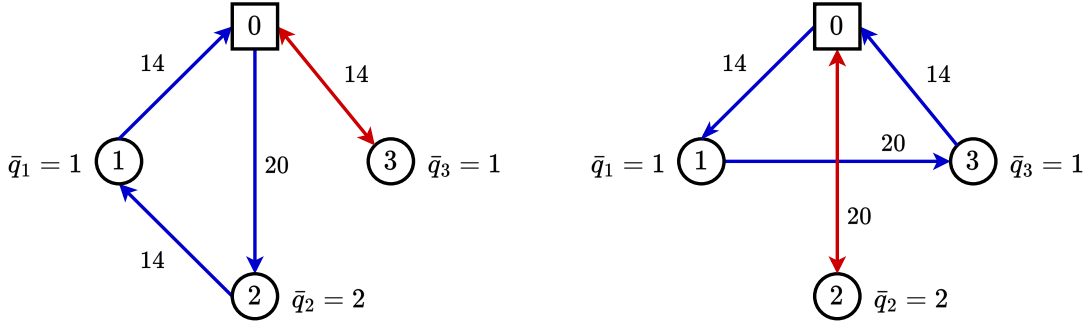

    \centering
    \subfloat[\subfigAcaption]{
        \includesvg[width=0.46\linewidth]{images/graph1_corner}
        \label{fig:vrpsd-a}
    }
    \hfill
    \subfloat[\subfigBcaption]{
        \includesvg[width=0.46\linewidth]{images/graph2_corner}
        \label{fig:vrpsd-b}
    }
    \\[0.5cm]
    \caption{
        Example of a VRPSD instance with~$V_+ = \{1, 2, 3\}$, $k = 2$, and $C = 3$. 
        The edge costs are $c_{01} = c_{03} = c_{12} = c_{23} = 14$ and $c_{13} = c_{02} = 20$. 
        The expected demand values $\bar{q}_v$ are given beside each customer $v \in V_+$. %
        In this example, each $q_i$ follows a normal probability distribution with variance~$0.1$, 
        so if a q-route~$R$ has $\bar{q}(R) \leq 2$, 
        then~$\mb{E}[\mathcal{Q}(R)] \approx 0$. 
        The solution in (a) has total cost~$\sum_{i \in [2]} (c(R_i) + \mb{E}[\mathcal{Q}(R_i)]) = 76 + \mb{E}[\mathcal{Q}((2, 1))] = 76 + (1/2) \cdot 28 = 90$, where the 1/2 appears because the normal distribution is symmetric and results in 50\% chance of exceeding capacity at customer 1.
    }
    \label{figure:vrpsd}
\end{figure}

Throughout this paper, we only consider instances of the VRPSD that satisfy Assumption~\eqref{assumption:vrpsd}, which says that, at any point along a route, the second-stage cost is just a function of the last vertex visited and the accumulated expected demand. %

\begin{assumption*}[Assumption~($\star$).]
    There exists a function~$\Psi : \R \times V_+ \to \R$ such that, for any elementary q-route~$R = (v_1, \ldots, v_\ell)$,
        \begin{equation}
            \tag{$\star$}\label{assumption:vrpsd}
            \Psi(v_\ell, \bar{q}(R)) 
                = 2c_{0v_\ell} \sum_{t = 1}^\infty \mb{P}\left[\sum_{i \in [\ell - 1]} q_{v_i} \leq t C < \sum_{i \in [\ell]} q_{v_i}\right] .
        \end{equation}
\end{assumption*}
    
    In particular, this implies that for any elementary q-route~$R = (v_1, \ldots, v_\ell)$,
        \begin{equation*}
            \mb{E}[\mathcal{Q}(R)] = \sum_{j \in [\ell]} \Psi(v_j, \bar{q}((v_1, \ldots, v_j))).
        \end{equation*}

Such an assumption is common in the VRPSD literature~\citep{christiansen2007, GAUVIN2014141, florio2020}, as it facilitates a column generation formulation of the VRPSD where the pricing subproblem is solved via a ``knapsack-like'' DP algorithm that defines one state for every possible choice of $(v_\ell, \bar{q}(R))$
(observe that multiple routes might collapse to the same state in this construction). 
Moreover, branch-and-cut algorithms~\citep{jabali2014, hoogendoorn2023improved,  parada2024disaggregated} consider instances that either satisfy Assumption~\eqref{assumption:vrpsd}, or a similar assumption where function~$\Psi$ also depends on other parameters such as the total variance of route~$R$.

\subsubsection{\mytitle{Formulation via network flows}}
\label{subsection:vrpsd_formulation}

We now present a mixed-integer programming formulation for the VRPSD whose LP relaxation has the form of Problem~\eqref{problem_master}. We begin by building on the branch-and-price algorithm of~\cite{christiansen2007} and the connection mentioned in Section~\ref{subsection:path_flow}. 

We construct a set partitioning formulation of the VRPSD via Problem~\eqref{problem_dw}.
Let~$\mathcal{R}$ be the set of all q-routes with respect to the input. 
Additionally, for each route~$R \in \mathcal{R}$ and edge~$e \in E$, let $\tsc{count}(R, e)$ denote how many times edge~$e$ appears in route~$R$. 
For each route $R \in \mathcal{R}$, let $\pi(R) \in \Z^E$ be a vector where each entry has value $\tsc{count}(R,e)$,
and let $\Pi = \{\pi(R) \suchthat R \in \mathcal{R}\}$.
We define $\hat{d}_{\pi(R)} = \mb{E}[\mathcal{Q}(R)]$ for each $R \in \mathcal{R}$,
$k$ as the number of vehicles in the VRPSD instance,
and
\begin{equation*}
    X =
    \left\{x \in [0, 2]^{E} \suchthat
    \begin{aligned}
        & x(\delta(0)) = 2 k, \\
        & x(\delta(v)) = 2, & \forall v \in V_+ \\
        & x(\delta(S)) \geq 2 \left\lceil \frac{\bar{q}(S)}{C} \right\rceil, & \forall S \subseteq V_+, S \neq \emptyset \\
        & x_e \leq 1, & \forall e \in E \setminus \delta(0)
    \end{aligned}
    \right\}.
\end{equation*}
Here, $\delta(v)$ denotes the set of edges incident to $v$,
and we allow edges to be traversed twice (for a route that visits a single customer).

Note that the linking constraints (which take the form $x_e = \sum_{R \in \mathcal{R}} \tsc{count}(R, e) \cdot \lambda_R$, for all $e \in E$)
and the definition of~$X$ imply that~$\lambda_R \leq 1$, for all~$R \in \mathcal{R}$. 
We refrain from further explaining this set partitioning formulation, as it follows the format of relatively standard formulations used in various branch-cut-and-price algorithms for different vehicle routing variants, including the VRPSD~\citep{Fukasawa2006, pessoa2020generic, christiansen2007, GAUVIN2014141, florio2020}.

\begin{figure}[htb]
    \centering
    \includesvg[width=0.7\linewidth]{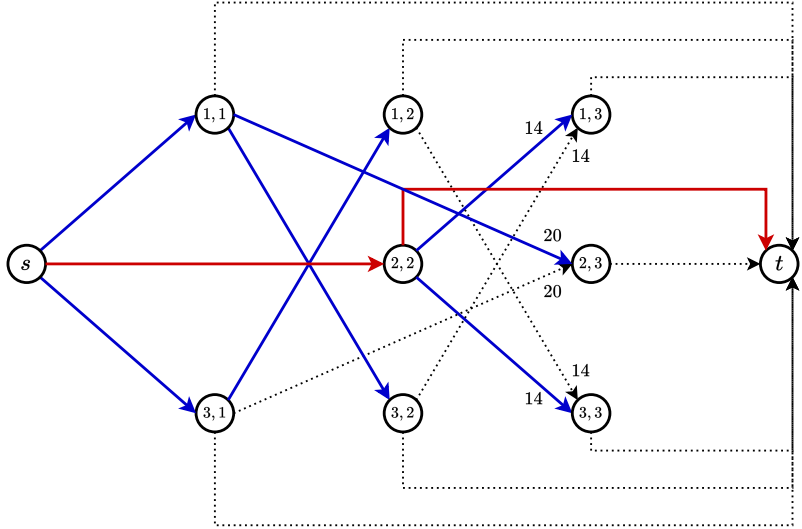}
    \caption{Network~$\mathcal{N} = (\mathcal{V}, \mathcal{A})$ corresponding to the example shown in Figure~\ref{figure:vrpsd}. The numbers next to the arcs indicate the arc cost~$d_a$. We omit these numbers when the arc cost is zero. 
    From left to right, each layer $\ell$ (indexed by the second number of each node label, corresponding to accumulated demand) contains a node for every customer with $\bar{q}_v \le \ell$.}
    \label{figure:vrpsd_network}
\end{figure}

Given Assumption~\eqref{assumption:vrpsd}, %
iteratively finding routes (pricing in a column generation framework) for the problem
can be modeled as a shortest~$s-t$ path problem in a network~$\mathcal{N} = (\mathcal{V}, \mathcal{A})$ as follows (see Figure~\ref{figure:vrpsd_network}). 
The vertex set~$\mathcal{V}$ is associated to ``states'' and comprises a source vertex~$s$ (state $[0,0]$), a sink vertex~$t$, and there is a vertex/state $\mathcal{S} = [v, \mu]$ for each $v \in V_+$ and $\mu \in \{\bar{q}_v, \ldots, C\}$. 
Intuitively, state~$[v, \mu]$ corresponds to a DP state of the pricing algorithm where we are currently at customer~$v$ with an accumulated average demand of~$\mu$. 
With this interpretation in mind, we define~$\vertex : \mathcal{V} \to V$ as a mapping such that~$\vertex(\mathcal{S}) = v$, if~$\mathcal{S} = [v, \mu]$, and~$\vertex(\mathcal{S}) = 0$, if~$\mathcal{S} \in \{s, t\}$.

Each arc~$\mathcal{S}_1 \mathcal{S}_2 \in \mathcal{A}$ is mapped to an edge~$\edge(\mathcal{S}_1 \mathcal{S}_2) \defeq \{\vertex(\mathcal{S}_1), \vertex(\mathcal{S}_2)\} \in E$. The intuition is that taking arc~$\mathcal{S}_1 \mathcal{S}_2$ in an~$s-t$ path in~$\mathcal{N}$ corresponds to adding~$\edge(\mathcal{S}_1 \mathcal{S}_2)$ to a route in the VRPSD instance graph $G$.
The arc set~$\mathcal{A}$ of the network is given by the union of the following sets
\begin{flalign*}
\SRC & = \{(s, [v, \bar{q}_v]) \suchthat v \in V_+\}, &\\
\FWD & = \{([u, \mu], [v, \mu + \bar{q}_v]) \suchthat u, v \in V_+, \mu \in \{\bar{q}_u, \ldots, C - \bar{q}_v\} \},~\text{and} &\\
\SNK & = \{([v, \mu], t) \suchthat v \in V_+, \mu \in \{\bar{q}_v, \ldots, C\}\}. &
\end{flalign*}
In practice, any state~$\mathcal{S}$ that is not reachable from the source~$s$ will be removed in a preprocessing step.

We set the arc costs~$d \in \R^{\mathcal{A}}$ as
\begin{equation*}
    d_a =
    \begin{cases}
        \Psi(v, \mu + \bar{q}_v), & \text{if }a = ([u, \mu], [v, \mu + \bar{q}_v]) \in \SRC \cup \FWD \\
        0, & \text{otherwise.}
    \end{cases}
\end{equation*}

Let~$\mb{I}(\,\cdot\,)$ denote the indicator function.
Applying the derivation of Problem~\eqref{problem_flow} in Section~\ref{subsection:path_flow}, we obtain the following flow-based formulation that is equivalent to the earlier set partitioning formulation based on Problem~\eqref{problem_dw},
where
\begin{equation*}
    Y = \left\{
        y \in \R_+^{\mathcal{A}} \suchthat
        y(\delta^-(\mathcal{S})) - y(\delta^+(\mathcal{S})) =
            k \cdot \mb{I}(\mathcal{S} = t) - k \cdot \mb{I}(\mathcal{S} = s),
            ~\forall \mathcal{S} \in \mathcal{V}
    \right\}
\end{equation*}
\begin{equation}
    z_{SP} = 
        \min_{x,y} \left\{
            \sum_{e \in E} c_e x_e + \sum_{a \in \mathcal{A}} d_a y_a
            \suchthat
            \sum_{a \in \mathcal{A} : \edge(a) = e} y_a = x_e \text{ for all } e \in E,\,
            x \in X,\,
            y \in Y
        \right\}.
    \label{problem:flow_cl}
\end{equation}

Let~$Q \in \R^{E \times \mathcal{A}}$ be such that each column~$Q_a$, with~$a \in \mathcal{A}$, is the incidence vector of~$\edge(a) \in E$. Then
    \begin{equation*}
        z_{SP} 
            = \min_{x,y} \{
            c^\T x + d^\T y \suchthat x = Qy, x \in X, y \in Y\} 
            = \min_{x,\theta} \{c^\T x + \theta \suchthat x \in X, (x, \theta) \in \epifY\},
    \end{equation*}
and we can solve the VRPSD to integrality by enforcing that~$x \in \Z^E$ above.

\subsection{\mytitle{Corners of network flow polytopes and numerical example of corner Benders' cuts}}
\label{subsection:corner_example}

In this subsection, we shall see that Figure~\ref{figure:vrpsd_network} can be interpreted as an illustration of a corner~$C$ of the network flow polytope~$Y$ defined in Section~\ref{subsection:vrpsd_formulation}. To do this, we first recall some basic facts from network flow theory (see Section 11.11 of~\cite{ahuja1988network}) %
to establish a connection between the geometric notion of corners and the combinatorial concepts of spanning trees and cycles.

\subsubsection{\mytitle{Network simplex background}}
One can show that exactly one of the equations in the system~$\nodearcmatrix y = b$ is redundant. Thus, from now on, we use~$\nodearcmatrix' y = b'$ to refer to the system of equations obtained by removing the equation associated with vertex~$s \in \mathcal{V}$. 
For any subset of arcs~$\mathcal{B}$ of~$\mathcal{A}$ we write~$\nodearcmatrix[N_{\mathcal{B}}]$ (respectively $\nodearcmatrix[N_{\mathcal{B}}']$) to denote the submatrix of $\nodearcmatrix$ (respectively $\nodearcmatrix'$) obtained by dropping the columns associated with arcs in $\mathcal{A} \setminus \mathcal{B}$. The following results are well known in network flow and network simplex theory.
\begin{theorem}
    Let~$\mathcal{T} \subseteq \mathcal{A}$ be such that~$\cardinality{\mathcal{T}} = \cardinality{\mathcal{V}} - 1$. The columns of~$\nodearcmatrix[N_{\mathcal{T}}']$ are linearly independent if and only if the arcs in~$\mathcal{T}$ forms a spanning tree of~$\mathcal{N}$.
    \label{thm:flow-tree}
\end{theorem}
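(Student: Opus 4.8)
The plan is to reduce the statement to a purely combinatorial dichotomy and then prove each direction with elementary incidence-matrix arguments. Since $\cardinality{\mathcal{T}} = \cardinality{\mathcal{V}} - 1$ is fixed, a standard graph-theoretic fact tells us that $\mathcal{T}$ forms a spanning tree of $\mathcal{N}$ if and only if the arcs of $\mathcal{T}$ contain no cycle in the underlying undirected graph (a forest on $\cardinality{\mathcal{V}}$ vertices with $\cardinality{\mathcal{V}} - 1$ edges is automatically connected, hence a spanning tree). So it suffices to show that the columns of $\nodearcmatrix[N_{\mathcal{T}}']$ are linearly independent if and only if $\mathcal{T}$ is acyclic. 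I would also recall the observation already noted before the theorem that the rows of $\nodearcmatrix$ sum to zero (each column has exactly one $+1$ and one $-1$), so dropping the row indexed by $s$ loses no rank, and $\nodearcmatrix[N']$ has full row rank $\cardinality{\mathcal{V}} - 1$ when $\mathcal{N}$ is connected.

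For the direction ``$\mathcal{T}$ contains a cycle $\Rightarrow$ dependent,'' I would take a cycle supported on $\mathcal{T}$, fix a traversal orientation, and assign coefficient $+1$ to each arc whose direction agrees with the traversal and $-1$ to each arc that opposes it. At every node visited by the cycle, the cycle enters once and leaves once, so the signed contributions of the two incident cycle-arcs cancel in every row of $\nodearcmatrix$; thus this nontrivial combination of the corresponding columns of $\nodearcmatrix$ is the zero vector. Deleting the $s$-row only erases entries, so the same combination of columns of $\nodearcmatrix[N']$ vanishes, exhibiting linear dependence.

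For the converse, ``$\mathcal{T}$ is a spanning tree $\Rightarrow$ independent,'' I would argue by leaf-peeling. Suppose $\sum_{a \in \mathcal{T}} \lambda_a \nodearcmatrix[N_a'] = \mathbf{0}$. Because a tree on at least two vertices has at least two leaves, I can always pick a leaf $v \neq s$; since $v$ is a leaf, exactly one arc $a \in \mathcal{T}$ is incident to it, and since $v \neq s$ its row survives in $\nodearcmatrix[N']$ with a single nonzero entry $\pm 1$ in column $a$. Reading off that row forces $\lambda_a = 0$. I would then delete $v$ and $a$, obtaining a spanning tree of the remaining graph, and repeat; after $\cardinality{\mathcal{V}} - 1$ steps all coefficients are zero. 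The main obstacle to watch is precisely this interaction with the deleted row: the peeling must always produce a leaf different from $s$, so that its equation is available in $\nodearcmatrix[N']$, which is exactly what the ``at least two leaves'' fact guarantees. Everything else is routine bookkeeping.
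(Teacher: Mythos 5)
Your proof is correct. Note that the paper itself offers no proof of this statement: it is introduced under the heading ``The following results are well known in network flow and network simplex theory'' and is implicitly deferred to the cited reference (Section 11.11 of Ahuja et al.). Your argument --- reducing to acyclicity via the edge count, producing a dependency from the signed incidence vector of a cycle, and establishing independence by peeling leaves distinct from the deleted node $s$ --- is exactly the standard textbook proof, and you correctly handle the one delicate point, namely that the leaf whose row you read off must not be the row removed from $\nodearcmatrix$ to form $\nodearcmatrix'$.
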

\begin{definition}
    Let~$C^a \subseteq \mathcal{N}$ be a cycle containing arc~$a \in \mathcal{A}$. 
    The notation~$\allones(C^a) \in \R^{\mathcal{A}}$ denotes the incidence vector of~$C^a$, i.e., for each $a' \in \mathcal{A}$,
    \begin{equation*}
        (\allones(C^a))_{a'} \defeq
        \begin{cases}
            1, & \text{if~$a'$ is a forward arc in~$C^a$}, \\
            -1, & \text{if~$a'$ is a backward arc in~$C^a$}, \\
            0, & \text{otherwise.} \\
        \end{cases}
    \end{equation*}
    \label{definition:cycle-ray}
\end{definition}
\begin{fact}
    Let~$\mathcal{T}$ be a basis (spanning tree) of~$Y$. For each~$a \in \mathcal{A} \setminus \mathcal{T}$,
    define a vector~$r^{a, \mathcal{T}} \in \R^{\mathcal{A}}$ as follows
    $$
    r^{a, \mathcal{T}} \defeq
    \begin{pmatrix}
    (\nodearcmatrix[N_{\mathcal{T}}'])^{-1} \nodearcmatrix'_{\{a\}} \\
    1 \\
    0 \\
    \end{pmatrix}
    \begin{array}{llll}
    \} ~\mathcal{T} \\
    \} ~\{a\} \\
    \} ~(\mathcal{A} \setminus \{a\}) \setminus \mathcal{T},
    \end{array}
    $$
    where the sets in the right indicate the corresponding entries of the vector.
    Then~$r^{a, \mathcal{T}} = \allones(C^a)$, where~$C^a$ is the unique cycle in the graph induced by~$\mathcal{T} \cup \{a\}$.
    \label{fact:flow-ray}
\end{fact}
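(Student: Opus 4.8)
The plan is to prove the classical network-simplex identity that the basic direction obtained when the nonbasic arc $a$ enters the spanning tree $\mathcal{T}$ is exactly the fundamental cycle of $a$. Concretely, I would show that $r^{a, \mathcal{T}}$ and $\allones(C^a)$ agree coordinate-by-coordinate by observing that both are the unique solution of the same linear system once an orientation of $C^a$ is fixed. The two ingredients I would set up first are: (i) $\mathcal{T}$ is a spanning tree, so adding the single arc $a$ creates a unique (undirected) cycle $C^a$ whose edges all lie in $\mathcal{T} \cup \{a\}$; and (ii) the incidence vector of any cycle is a circulation, that is $\nodearcmatrix \allones(C^a) = \mathbf{0}$, and hence also $\nodearcmatrix' \allones(C^a) = \mathbf{0}$ after deleting the one redundant row indexed by $s$.

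With these in hand the matching is almost entirely structural. Both vectors vanish on $(\mathcal{A} \setminus \{a\}) \setminus \mathcal{T}$: for $r^{a, \mathcal{T}}$ by definition, and for $\allones(C^a)$ because $C^a \subseteq \mathcal{T} \cup \{a\}$. Orienting $C^a$ so that $a$ is a forward arc makes its $a$-entry equal to $1$, matching the middle block of $r^{a, \mathcal{T}}$. For the remaining $\mathcal{T}$-block I would split the columns in $\nodearcmatrix' \allones(C^a) = \mathbf{0}$ into those indexed by $\mathcal{T}$, by $\{a\}$, and by the rest; the rest-block drops out and the $a$-column contributes $\nodearcmatrix'_{\{a\}}$, leaving the square system $\nodearcmatrix[N_{\mathcal{T}}'] (\allones(C^a))_{\mathcal{T}} = -\nodearcmatrix'_{\{a\}}$ for the tree coordinates. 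By Theorem~\ref{thm:flow-tree}, since $\cardinality{\mathcal{T}} = \cardinality{\mathcal{V}} - 1$ and $\mathcal{T}$ is a spanning tree, the matrix $\nodearcmatrix[N_{\mathcal{T}}']$ is invertible, so $(\allones(C^a))_{\mathcal{T}}$ is uniquely pinned down and equals the top block prescribed in the definition of $r^{a, \mathcal{T}}$ (the same expression that appears in the ray construction of Remark~\ref{remark:simplex}).

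I expect the only real difficulty to be the \emph{sign and orientation bookkeeping}. Definition~\ref{definition:cycle-ray} does not fix an orientation for $C^a$, while the head-$(+1)$/tail-$(-1)$ convention of Definition~\ref{definition:network} feeds a definite sign into $\nodearcmatrix'_{\{a\}}$; reconciling the two so that the solution of $\nodearcmatrix[N_{\mathcal{T}}'] z = \pm \nodearcmatrix'_{\{a\}}$ reproduces exactly the signed expression written for the $\mathcal{T}$-block of $r^{a, \mathcal{T}}$ is where care is needed, and one must be vigilant that choosing the $a$-entry to be $+1$ forces a definite (and in fact negative) sign on the tree block. The clean way to settle this is to verify the circulation property $\nodearcmatrix \allones(C^a) = \mathbf{0}$ directly at each node traversed by $C^a$: such a node is entered along one cycle-arc and left along another, and in each of the four forward/backward combinations the two contributions to that coordinate of $\nodearcmatrix \allones(C^a)$ cancel under the head/tail convention. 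Once the orientation is fixed to agree with the $+1$ at $a$, the sign of the $\mathcal{T}$-block is determined, and everything that remains is routine linear algebra.
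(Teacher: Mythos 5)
The paper gives no proof of this Fact at all---it is quoted as classical network-simplex material with a pointer to Section 11.11 of Ahuja et al.---so there is no in-paper argument to compare against. Your derivation is the standard one and it is sound: uniqueness of the fundamental cycle $C^a$ in $\mathcal{T}\cup\{a\}$, the node-by-node cancellation showing $\nodearcmatrix\,\allones(C^a)=\mathbf{0}$ (and hence $\nodearcmatrix'\,\allones(C^a)=\mathbf{0}$), and the invertibility of the square matrix $\nodearcmatrix[N_{\mathcal{T}}']$ supplied by Theorem~\ref{thm:flow-tree} together pin down $\allones(C^a)$ as the unique vector supported on $\mathcal{T}\cup\{a\}$ with $a$-entry equal to $1$ lying in the kernel of $\nodearcmatrix'$, which is exactly what the block formula describes. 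Fixing the orientation of $C^a$ so that $a$ is a forward arc is also the right way to resolve the ambiguity left open by Definition~\ref{definition:cycle-ray}.

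One point you should state plainly rather than leave implicit: your computation yields $(\allones(C^a))_{\mathcal{T}} = -(\nodearcmatrix[N_{\mathcal{T}}'])^{-1}\nodearcmatrix'_{\{a\}}$, with a minus sign, consistent with the ray construction in Remark~\ref{remark:simplex} (where the basic block is $-\bar{a}^j$). The formula displayed in the Fact omits that sign; as printed, the vector would satisfy $\nodearcmatrix'\,r^{a,\mathcal{T}} = 2\,\nodearcmatrix'_{\{a\}} \neq \mathbf{0}$ and could not be a circulation, so the statement is only true with the corrected sign. You clearly noticed this (``forces a definite (and in fact negative) sign on the tree block''), but a final write-up should say explicitly that it is proving the sign-corrected statement.
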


\noindent
Henceforth, whenever~$\mathcal{T}$ is clear from the context, we omit~$\mathcal{T}$ from the superscript of~$r$.

\subsubsection{\mytitle{Numerical example}}

We now present a numerical example that illustrates corner Benders' cuts. Consider the instance and network illustrated in Figures~\ref{figure:vrpsd} and~\ref{figure:vrpsd_network}. 
Suppose that our current solution~$(\bar{x}, \bar{\theta})$ has~$\bar{x}_{01} = \bar{x}_{02} = \bar{x}_{12} = 1$,~$\bar{x}_{03} = 2$ and~$\bar{\theta} = 0$ (same as Figure~\ref{fig:vrpsd-a}, but with~$\bar{\theta}$ less than~$\fY(\bar{x}) = \mb{E}[\mathcal{Q}((2,1))] = (1/2) \cdot 28 = 14$).
We first solve Problem~\eqref{problem:lagrangian1} to obtain the Lagrangian cut
\begin{equation}
    (\alpha^1)^\T x + \theta = x_{01} + x_{03} -15 x_{12} - 2 x_{13} - 15 x_{23} + \theta \geq 0.
    \label{ineq:vrpsd_lagrange}
\end{equation}
Note that~$(\bar{x}, \bar{\theta})$ violates this inequality: $(\alpha^1)^\T \bar{x} + \bar{\theta} = -12 < 0$.
Then, via Theorem~\ref{thm:lagrangian_cut} with $(\rho,\rho_0) = (c,1)$, \eqref{ineq:vrpsd_lagrange} implies the objective function cut
    $c^\T x + \theta \geq \sigma_{\feasreg}(c,1) = z_{SP}$, and in this case, it turns out that $z_{SP} = 88$.
We then solve
    $\min_{y} \{(\alpha^1)^\T Q y + d^\T y \suchthat y \in Y \}$ 
to obtain an optimal solution~$y^*$ and a corresponding optimal basis (spanning tree)~$\mathcal{T}$. These are illustrated in Figure~\ref{figure:vrpsd_network}: the solid arcs indicate the arcs in the tree and~$y^*$ is given by the red arcs (i.e., path~$(s, [2, 2], t)$), so~$\sigma_Y(Q^\T \alpha^1 + d) = 0$.

Let~$C = \{y^*\} + \cone(R)$ be a corner of~$Y$ derived from the basis~$\mathcal{T}$ according to Remark~\ref{remark:simplex}. By Fact~\ref{fact:flow-ray}, we have~$R = \{r^a \}_{a \in \mathcal{A} \setminus \mathcal{T}}$, that is, the rays in Remark~\ref{remark:simplex} correspond to the cycles that we obtain by adding arcs in~$\mathcal{A} \setminus \mathcal{T}$ to the spanning tree~$\mathcal{T}$. Moreover, using Lemma~\ref{lemma:epigraph_cone}, we have that~$\epifY[C] = \{(Qy^*, d^\T y^*)\} + \cone(\{(Q r^a , d^\T r^a \}_{a \in \mathcal{A} \setminus \mathcal{T}} \cup \{(\mathbf{0}, 1)\})$.

\begin{figure}[htb]
    \centering
    \includesvg[width=0.7\linewidth]{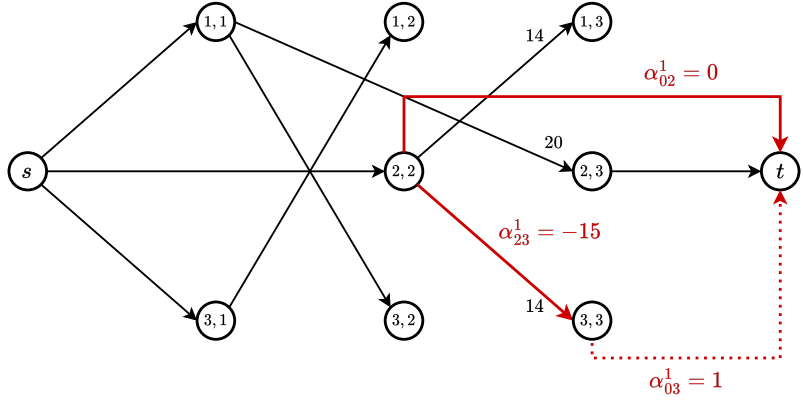}
    \caption{Illustration of the ray (cycle)~$r^{([3, 3], t)}$. Notice that~$(\alpha^1)^\T (Q r^{([3, 3], t)}) + d^\T r^{([3, 3], t)}) = 0$.}
    \label{figure:vrpsd_cycle}
\end{figure}

In this way, we apply Theorem~\ref{thm:polar} to separate the corner Benders' cut
\begin{equation}
    (\alpha^2)^\T x + \theta = -14 x_{12} - 14 x_{23} + \theta \geq 0.
    \label{ineq:vrpsd_corner}
\end{equation}

\noindent
Observe that~\eqref{ineq:vrpsd_corner} dominates~\eqref{ineq:vrpsd_lagrange} since summing some of the degree constraints defining~$X$ to~\eqref{ineq:vrpsd_lagrange} yields
\begin{align}
& (\alpha^1)^\T x + \theta - 2 x(\delta(0)) + x(\delta(1)) + x(\delta(3)) \geq -2 (2k) + 2 (2) \nonumber \\
\iff & (\alpha^1)^\T x + \theta + (-2 x_{01} - 2 x_{02} - 2 x_{03}) + (x_{01} + x_{12} + x_{13}) + (x_{03} + x_{13} + x_{23}) \geq -4 \nonumber \\
\iff & -2 x_{02} - 14 x_{12} - 14 x_{23} + \theta \geq -4, \nonumber
\end{align}
which is equivalent to summing~\eqref{ineq:vrpsd_corner} with inequality~$- 2 x_{02} \geq -4$. In addition, we can manually verify that, as predicted by Theorem~\ref{thm:polar}, inequality~\eqref{ineq:vrpsd_corner} defines a facet of~$\epifY[C]$. To see validity, we can just check that~$(\alpha^2)^\T (Q y^*) + d^\T y^* = 0$ and~$(\alpha^2)^\T (Q r^a) + d^\T r^a \geq 0$, for all~$a \in \mathcal{A} \setminus \mathcal{T}$. To check the dimension of the face induced by~\eqref{ineq:vrpsd_corner} we observe that~$\alpha = \alpha^2$ is the unique solution to the system of equations below.
\begin{align}
& \alpha^\T (Q y^*) + d^\T y^* = 2 \alpha_{02} + 2 \alpha_{02} = 0 \nonumber \\
& \alpha^\T Q r^{([1, 1], t)} + d^\T r^{([1, 1], t)} = 2 \alpha_{01} - 2 \alpha_{02} = 0, \nonumber \\
& \alpha^\T Q r^{([1, 2], t)} + d^\T r^{([1, 2], t)} = \alpha_{01} - 2 \alpha_{02} + \alpha_{03} + \alpha_{13} = 0, \nonumber \\
& \alpha^\T Q r^{([1, 3], t)} + d^\T r^{([1, 3], t)} = \alpha_{01} - \alpha_{02} + \alpha_{12} + 14 = 0, \nonumber \\
& \alpha^\T Q r^{([3, 1], t)} + d^\T r^{([3, 1], t)} = 2 \alpha_{03} - 2 \alpha_{02} = 0, \nonumber \\
& \alpha^\T Q r^{([3, 3], t)} + d^\T r^{([3, 3], t)} = \alpha_{03} - \alpha_{02} + \alpha_{23} + 14 = 0. \nonumber
\end{align} 

\section{\mytitle{Computational experiments}}
\label{section:experiments}

To evaluate the performance of the proposed corner Benders' cuts, we conducted computational experiments on VRPSD instances. Therefore, in the remainder of this section, we use the notation and concepts introduced in Section~\ref{subsection:vrpsd}. In particular, we use the sets~$X \subseteq \R^E$ and~$Y \subseteq \R^{\mathcal{A}}$ as defined in Section~\ref{subsection:vrpsd_formulation}. 

As a baseline, we implemented our own version of the state-of-the-art integer L-shaped algorithm by~\cite{parada2024disaggregated} (denoted \textsc{parada}). We then evaluated the algorithm's performance when the root node relaxation was strengthened by the following methods for separating Benders' cuts:
\begin{itemize}[leftmargin=*]
    \item \textsc{benders}:  Benders decomposition with the normalization proposed by~\cite{fischetti2010note};
    \item \textsc{lagrange}:  single Lagrangian cut derived from Theorem~\ref{thm:lagrangian_cut} (inspired by \cite{chen2024recovering}); and
    \item \textsc{corner}: corner Benders' cuts for~$\epifY[C]$, where~$C$ is obtained according to Proposition~\ref{prop:strong_corner}.
\end{itemize}
We do not show the results with the objective function cut, since it is always worse than \textsc{lagrange}. %

\subsection{\mytitle{Benchmark instances}}
\label{subsection:instances}

We tested the efficiency of our method on the instances proposed by~\cite{parada2024disaggregated} with~$|V| \in \{20, 30, 40, 50\}$ vertices, giving a total of 720 benchmark instances.
Three instances with 20 customers and~$k = 7$ are infeasible (\texttt{\small 20\_7\_0.95\_0}, \texttt{\small 20\_7\_0.95\_3} and \texttt{\small 20\_7\_0.95\_8}). Accordingly, we adjusted the number of vehicles to 8 for these instances.%
\footnote{The complete set of instances and their computational results are available on the website of one of the authors:~\url{https://sites.google.com/view/jfcote/}. The reported solutions for the three mentioned instances indeed use 8 vehicles.}
In every instance, for each customer~$v \in V_+$, customer demand~$q_v$ follows a normal distribution with variance and mean equal to~$\bar{q}_v$, meaning that these instances satisfy Assumption~\eqref{assumption:vrpsd}. To more accurately assess the performance of our method, we divided the instances into ``few vehicles'' or ``small~$k$'' instances ($k \in \{2, 3, 4\}$) and ``many vehicles'' or ``large~$k$'' instances ($k \in \{5, 6, 7\}$). This categorization was motivated by the observation that the algorithm \textsc{parada} already performs very well for small~$k$ instances, but it struggles for instances with large~$k$. For each instance, we generate the network~$\mathcal{N}$ according to the construction described in Section~\ref{subsection:vrpsd_formulation}. In Table~\ref{table:network_size} we report the average number of nodes and arcs in the resulting networks.

\begin{table}[htb!]
\centering
\footnotesize
\begin{tabular}{l@{\hskip 2em}rrrrrrrrrrrr}
\toprule
& \multicolumn{2}{c}{$k = 2$} & \multicolumn{2}{c}{$k = 3$} & \multicolumn{2}{c}{$k = 4$} & \multicolumn{2}{c}{$k = 5$} & \multicolumn{2}{c}{$k = 6$} & \multicolumn{2}{c}{$k = 7$} \\ 
\cmidrule(r){2-3} \cmidrule(r){4-5} \cmidrule(r){6-7} \cmidrule(r){8-9} \cmidrule(r){10-11} \cmidrule(r){12-13}
$|V|$ & $|\mathcal{V}|$ & $|\mathcal{A}|$ & $|\mathcal{V}|$ & $|\mathcal{A}|$ & $|\mathcal{V}|$ & $|\mathcal{A}|$ & $|\mathcal{V}|$ & $|\mathcal{A}|$ & $|\mathcal{V}|$ & $|\mathcal{A}|$ & $|\mathcal{V}|$ & $|\mathcal{A}|$ \\
\midrule
20 & 1023 & 17494 & 647 & 10367 & 445 & 6606 & 350 & 4739 & 280 & 3364 & 228 & 2426 \\
30 & 2477 & 67252 & 1563 & 40851 & 1131 & 28373 & 923 & 22074 & 715 & 16232 & 589 & 12595 \\
40 & 4567 & 169719 & 2879 & 104160 & 2153 & 75738 & 1740 & 59308 & 1378 & 45448 & 1141 & 36239 \\
50 & 6988 & 329636 & 4609 & 212992 & 3488 & 157730 & 2749 & 121479 & 2240 & 96592 & 1850 & 77707 \\
\bottomrule
\end{tabular}
\caption{Average size of the network~$\mathcal{N}$ for different values of~$k$.}
\label{table:network_size}
\end{table}

\subsection{\mytitle{Implementation details}}
\label{subsection:implementation}

All algorithms were executed in single-thread mode on a machine equipped with an Intel(R) Xeon(R) Gold 6142 CPU @ 2.60GHz processor. The code was implemented in C++, with Gurobi 12 as the LP/MIP solver and the Lemon library~\citep{lemon} for handling basic graph operations. The time limit for every run of the algorithms was set at 1 hour. Our implementation of the state-of-the-art algorithm of~\cite{parada2024disaggregated} is an exact branch-and-cut algorithm for the VRPSD that separates the rounded capacity inequalities (RCIs) of~\cite{laporte1983branch} and the integer L-shaped (ILS) cuts of~\cite{parada2024disaggregated}.%
\footnote{Appendix~\ref{appendix:parada} provides further details on the separation routines of~\cite{parada2024disaggregated}, identifies a minor mistake in their algorithm description, and compares in more detail the results of our implementation with those in the authors' online table.}
The formulation proposed by \cite{parada2024disaggregated} uses ``disaggregated'' second-stage variables, that is, for each customer~$v \in V_+$, the formulation has a variable~$\theta'_v$ that represents the expected second-stage cost paid at customer~$v$. \cite{parada2024disaggregated} propose different ILS cuts that guarantee that, at integer solutions, the sum~$\sum_{v \in V_+} \theta'_v$ correctly matches the expected second-stage cost of the first-stage solution. We refer the reader to the paper by \cite{parada2024disaggregated} for more details. Note that any Benders' cut~$\alpha^\T x + \alpha_0 \theta \geq \beta$ can be translated to the space of variables~$(x, \theta') \in \R^E \times \R^{V_+}$ as the inequality~$\alpha^\T x + \alpha_0 \, (\sum_{v \in V_+ } \theta' _v) \geq \beta$.

In our experiments, we observed that our implementation of the algorithm proposed by \cite{parada2024disaggregated} is competitive with the results reported in their online table: we can solve 5 more instances in the time limit and our implementation achieves root gaps that are, on average, approximately 1.5 times smaller. To achieve such stronger LP relaxation bounds, we start our solver by relaxing the integrality constraints and using a custom cutting-plane loop for separating RCIs and ILS cuts. This loop terminates under one of two conditions: (i) no additional cuts can be separated, or (ii) the bound fails to improve by at least \(10^{-3}\) over the last 10 iterations.  Once the loop stops, we enforce integrality on the~\(\{x_e\}_{e \in E}\) variables and solve the resulting MIP model with Gurobi, using callbacks to separate RCIs and ILS cuts during the solution process. Appendix~\ref{appendix:cutting_plane_loop} contains a pseudocode of our cutting-plane loop.

To evaluate the performance of \textsc{benders}, \textsc{lagrange}, and \textsc{corner}, we adapted our custom cutting-plane procedure to include the separation of the corresponding cuts. Appendix~\ref{appendix:separation_benders} provides further details on the separation of each type of Benders' cuts.

\subsection{\mytitle{Computational results}}
\label{subsection:experiments_results}

The results of our experiments are summarized in Figure~\ref{figure:experiments}. Loosely speaking, these figures show the cumulative distribution of the instances with respect to one of two metrics: \emph{gap} or \emph{time}.%
\footnote{The \emph{(remaining) gap} is computed as $100 \times (z_{\textrm{opt}} - z_{\textrm{cuts}}) / z_{\textrm{opt}}$, where
    $z_{\textrm{opt}}$ is the optimal value of the problem (known for our instances)
    and 
    $z_{\textrm{cuts}}$ is the bound after adding cuts to the LP relaxation.
}
Take Figure~\ref{figure:gap_small} as an example and let~$p = (p_1, p_2) \in \R^2$ be a point in the line of Figure~\ref{figure:gap_small} that corresponds to algorithm \textsc{corner}. Let~$\mathcal{I}$ be the set of all small~$k$ instances and let~$\hat{\mathcal{I}}$ be the number of these instances whose LP relaxation bound computed by \textsc{corner} has a gap of at most~$p_1$. The number~$p_2$ corresponds to the ratio~$|\hat{\mathcal{I}}| / |\mathcal{I}|$. Similarly, for Figure~\ref{figure:time_small} and a point~$p' = (p'_1, p'_2) \in \R^2$ in the line of algorithm \textsc{corner}, we have that~$p'_2 = |\hat{\mathcal{I}}'| / |\mathcal{I}|$, where~$\hat{\mathcal{I}}' \subseteq \mathcal{I}$ is the set of instances that \textsc{corner} solved the problem in a runtime of at most~$p'_1$.

\begin{figure}
    \centering
    \subfloat[Gap for small~$k$ instances.]{
        \includegraphics[width=0.44\linewidth]{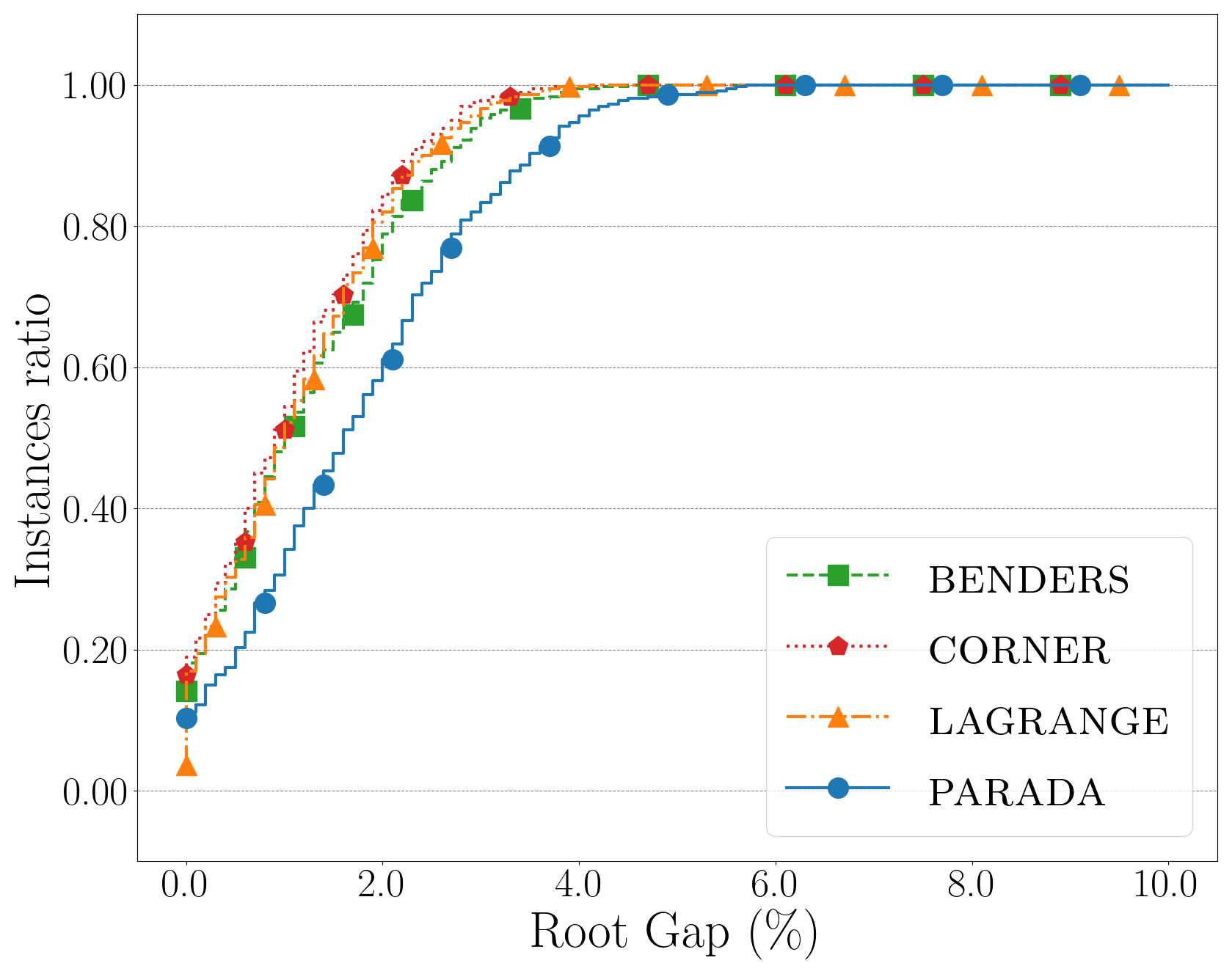}
        \label{figure:gap_small}
    }
    \hfill
    \subfloat[Time for small~$k$ instances.]{
        \includegraphics[width=0.44\linewidth]{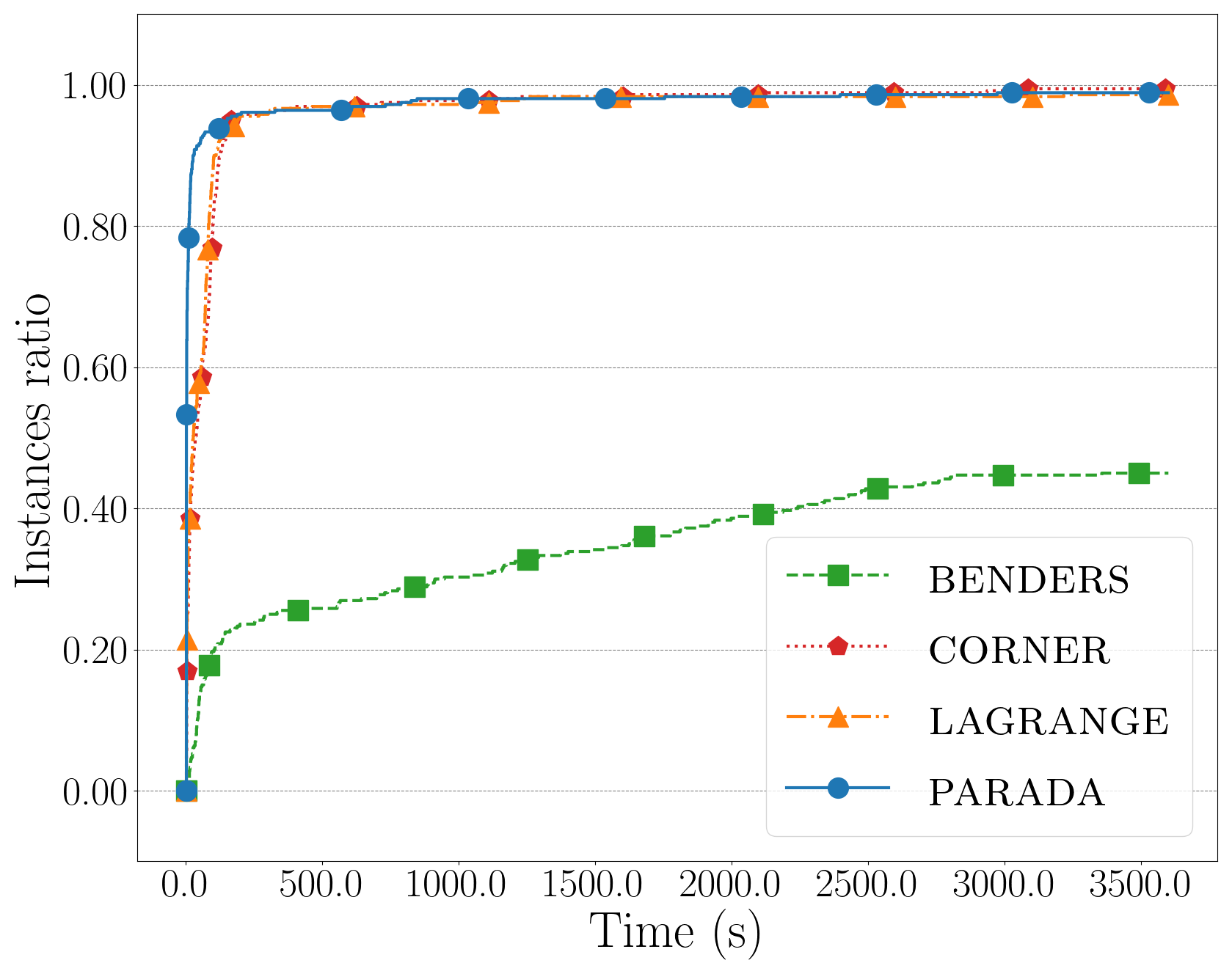}
        \label{figure:time_small}
    }
    \\[0.2cm]
    \subfloat[Gap for large~$k$ instances.]{
        \includegraphics[width=0.44\linewidth]{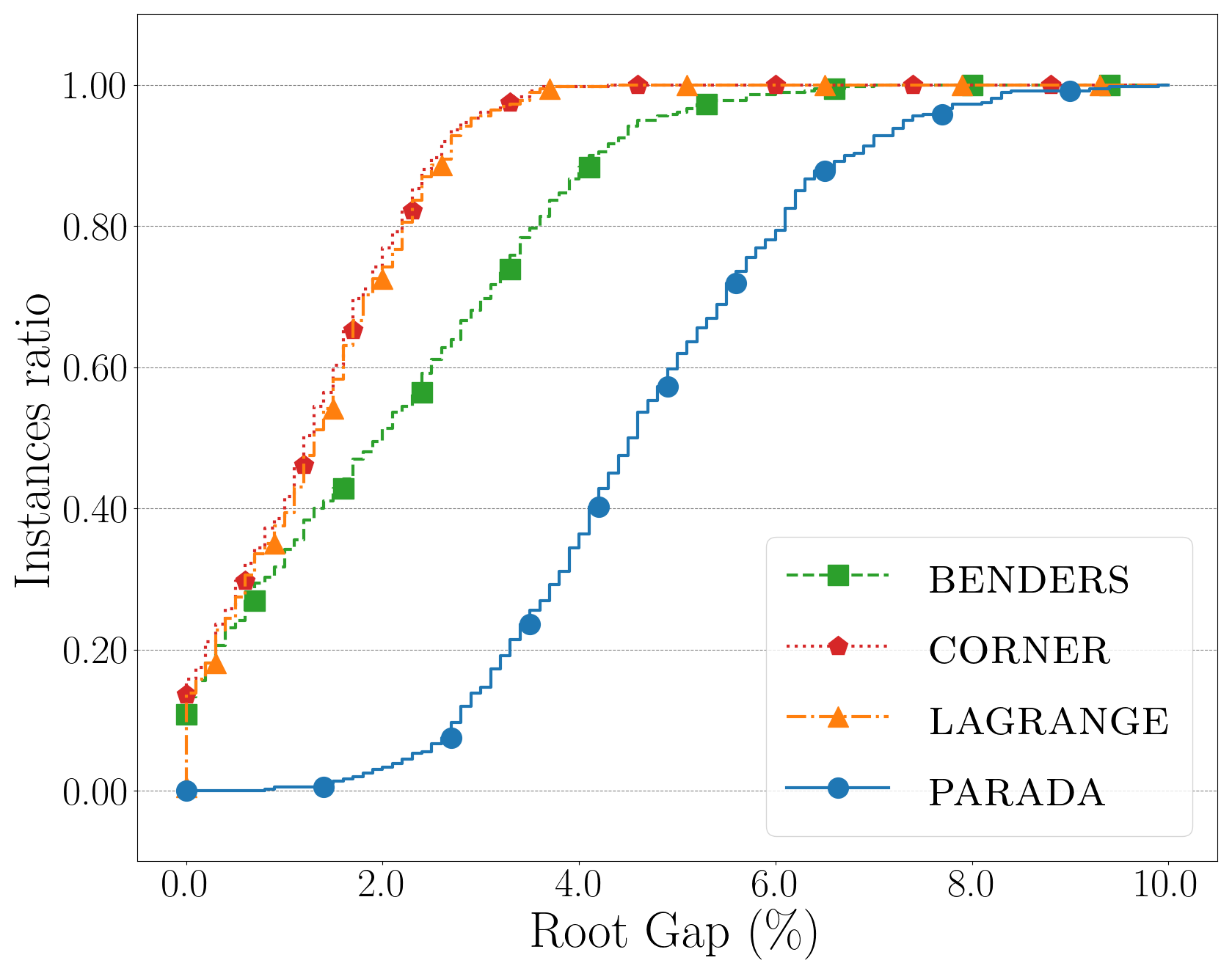}
        \label{figure:gap_large}
    }
    \hfill
    \subfloat[Time for large~$k$ instances.]{
        \includegraphics[width=0.44\linewidth]{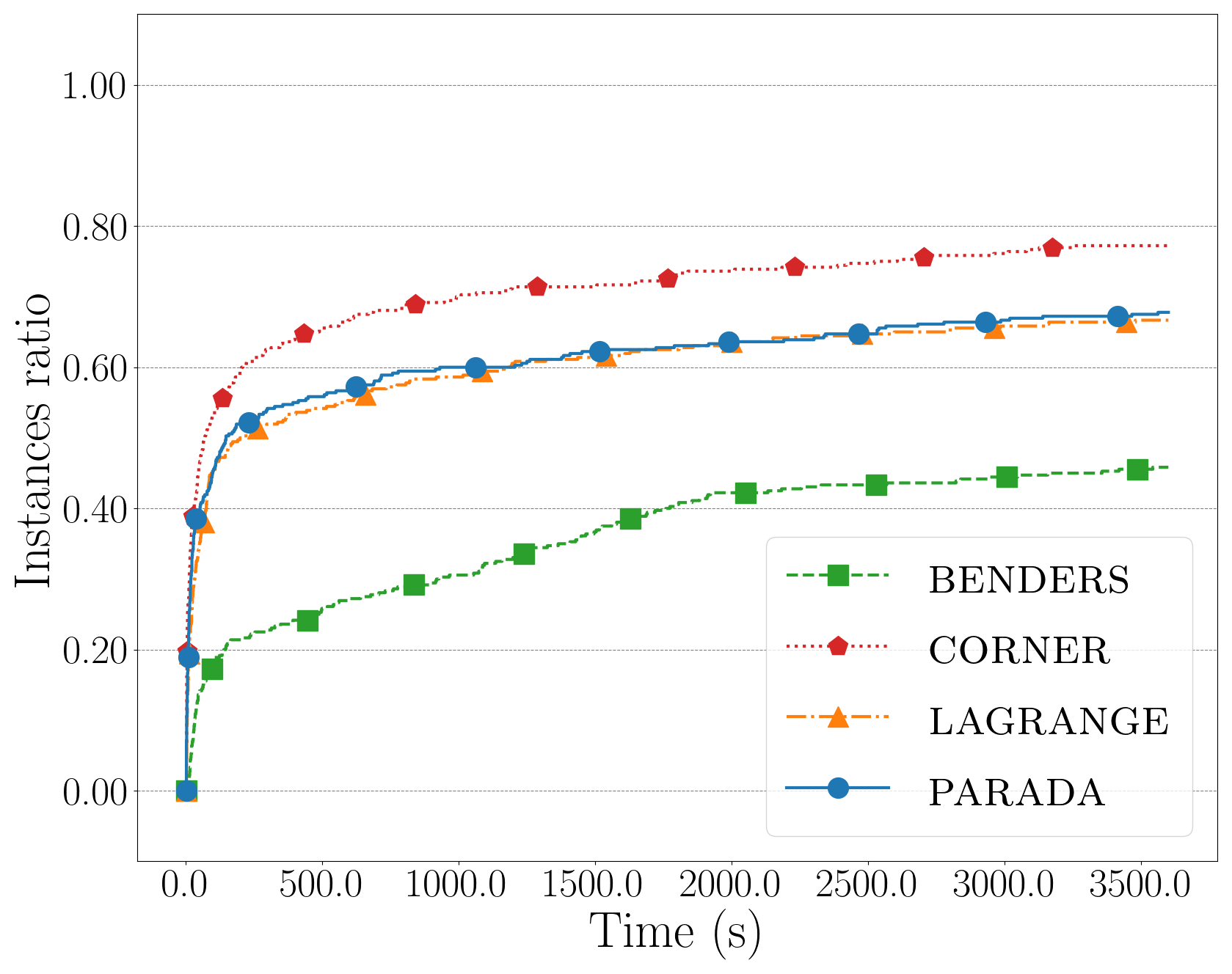}
        \label{figure:time_large}
    }
    \\[0.5cm]
    \caption{
        Computational results for instances of Parada et al.
    }
    \label{figure:experiments}
\end{figure}

On one hand, we see in Figure~\ref{figure:time_small} that, for small~$k$ instances, all the approaches (except \textsc{benders}) are fairly efficient and can solve almost all of the instances within the time limit. Moreover, Figure~\ref{figure:gap_small} shows that both \textsc{corner} and \textsc{lagrange} achieved slightly stronger root gaps than \textsc{benders}, and the execution time of both is competitive with \textsc{parada}. On the other hand, in Figures~\ref{figure:gap_large} and~\ref{figure:time_large}, we see that, for large~$k$ instances, none of the approaches can solve all of the instances; and only \textsc{corner} can solve more instances than \textsc{parada}. Looking at Figure~\ref{figure:gap_large}, we see again that both \textsc{corner} and \textsc{lagrange} achieved fairly strong root gaps, and Figure~\ref{figure:time_large} shows that algorithm \textsc{corner} has a significantly better overall execution time. Table~\ref{table:solved} shows the total number of instances solved by each algorithm within the time limit.

\begin{table}[htb!]
\centering
\begin{tabular}{rrrr}
\toprule
\textsc{parada} & \textsc{benders} & \textsc{lagrange} & \textsc{corner} \\ 
\midrule
600 & 327 & 595 & 636 \\ 
\bottomrule
\end{tabular}
\caption{Number of instances (out of 720) solved by each algorithm in the time limit of 1 hour.}
\label{table:solved}
\end{table}

\paragraph{\textbf{Number of branch-and-bound nodes.}}

The fact that \textsc{corner} is faster than \textsc{lagrange} might be because algorithm \textsc{corner} gives information of several facet-defining inequalities for~$\epifY[C]$, while the only information that \textsc{lagrange} has on the value function~$\fY$ is a single Lagrangian cut. To further investigate this matter, we show in Figure~\ref{figure:number_nodes} the number of branch-and-bound nodes explored by each method. In these figures, a point~$p = (p_1, p_2) \in \R^2$ in the line of an algorithm is so that~$p_2 = |\hat{\mathcal{I}}| / |\mathcal{I}|$, where~$\mathcal{I}$ is the set of all considered instances and~$\hat{\mathcal{I}} \subseteq \mathcal{I}$ is the set of instances to which the algorithm solved the problem to optimality by exploring at most~$p_1$ nodes of the branch-and-bound tree.

\begin{figure}[htb]
    \centering
    \subfloat[Nodes explored for small~$k$ instances.]{
        \includegraphics[width=0.44\linewidth]{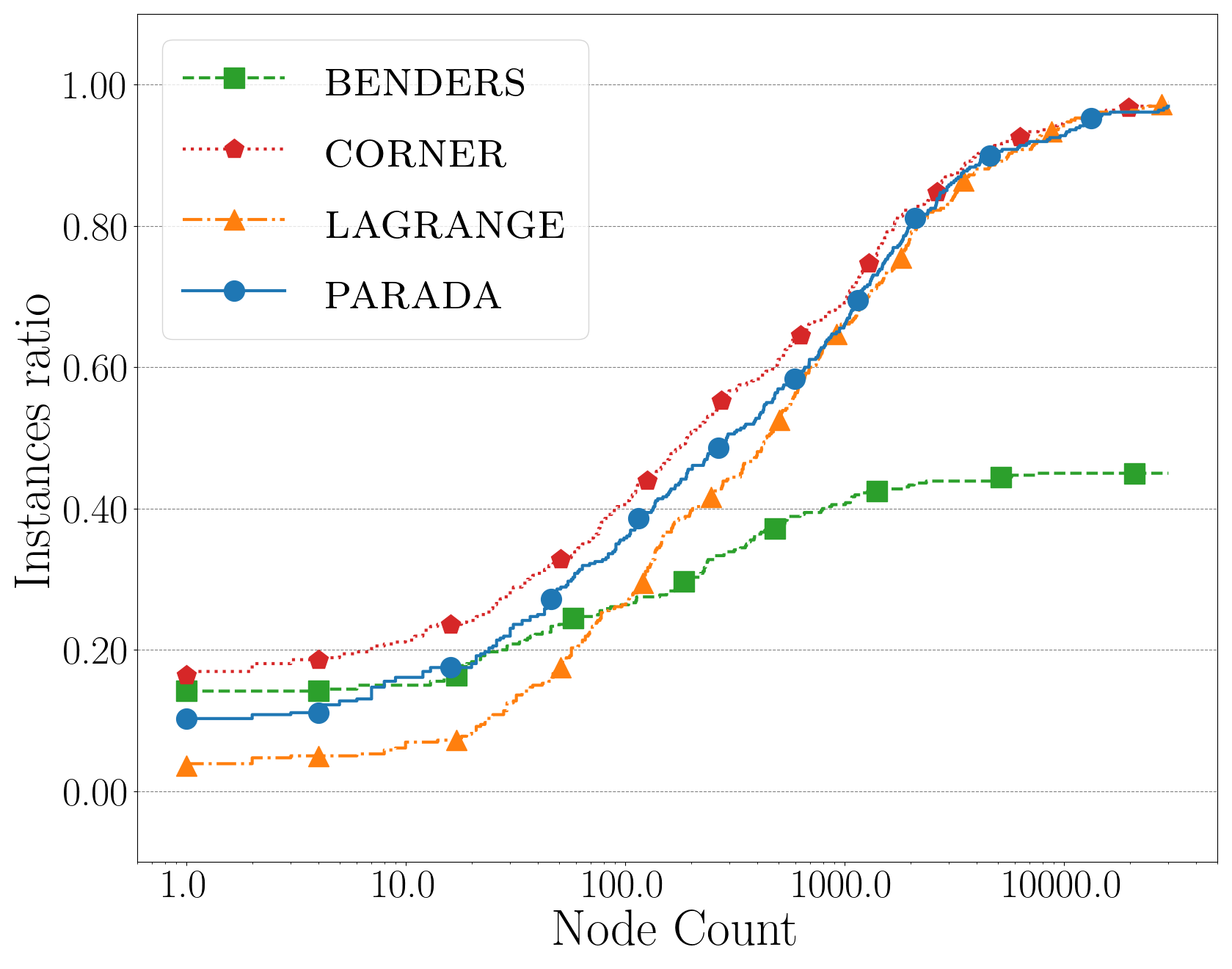}
        \label{figure:node_small}
    }
    \hfill
    \subfloat[Nodes explored for large~$k$ instances.]{
        \includegraphics[width=0.44\linewidth]{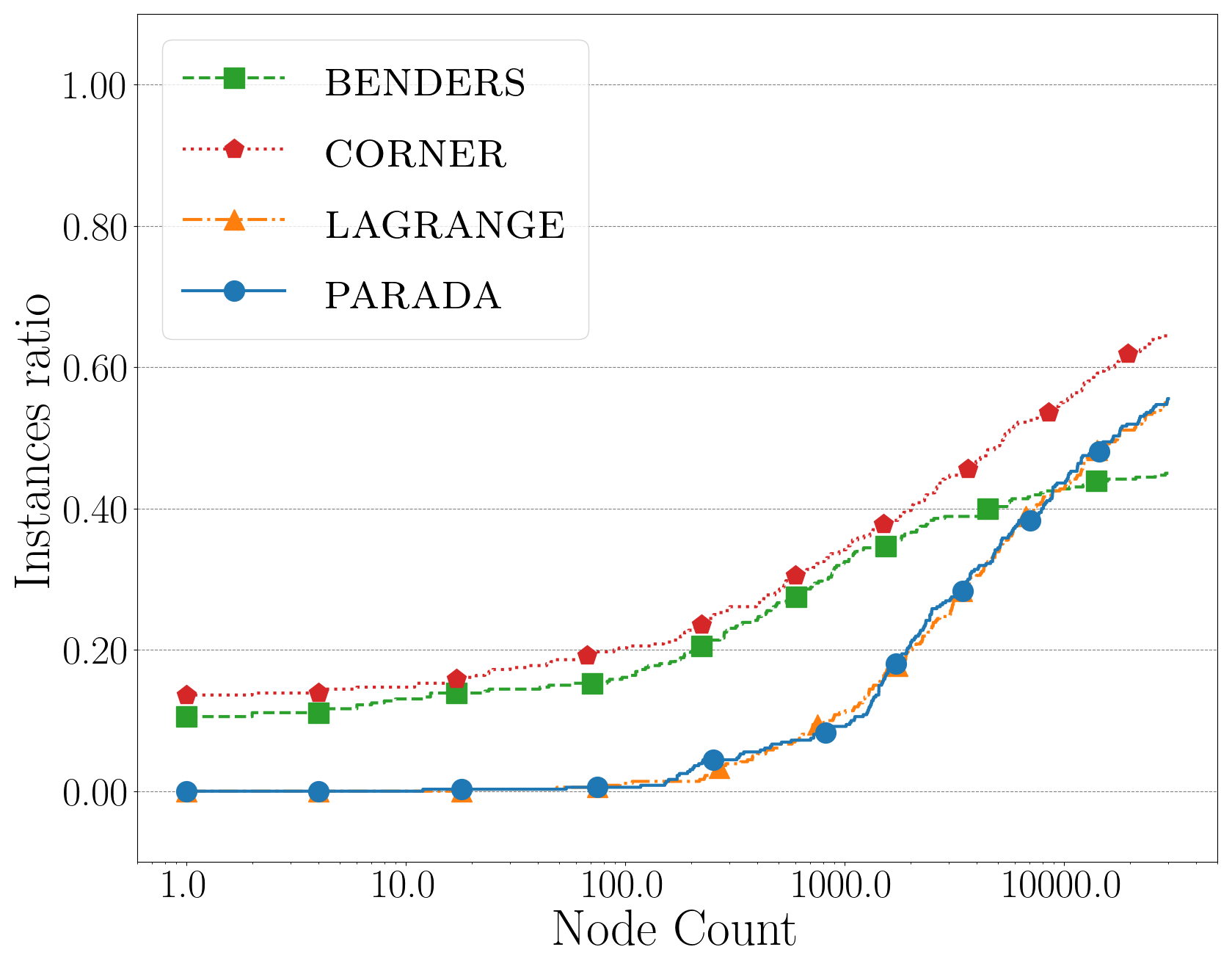}
        \label{figure:node_large}
    }
    \\[0.5cm]
    \caption{
        Results for the number of branch-and-bound nodes explored by the different algorithms.
    }
    \label{figure:number_nodes}
\end{figure}

Figure~\ref{figure:number_nodes} clearly shows that \textsc{corner} explores fewer branch-and-bound nodes than \textsc{lagrange}. Furthermore, \textsc{corner} solves approximately 16\% (respectively, 14\%) of all small~$k$ instances (respectively, large~$k$ instances) in the root node. This result suggests that corner Benders' cuts might be an efficient way of replacing the objective function cut, while the Lagrangian cut approach may still lead to high-dimensional optimal faces, which can be detrimental to the performance of the MIP solver.

\paragraph{\textbf{\mytitle{Root bound convergence.}}}

Next, we analyze the convergence speed of \tsc{corner} to the optimal bound~$z^*$ in Problem~\ref{problem_master}, comparing it with the other two Benders' cuts-based methods. Figure~\ref{figure:convergence} illustrates the improvement in the LP relaxation bound over time (we use a representative instance to showcase our findings, but similar behavior is observed across other instances). The horizontal line for \tsc{lagrange} indicates that the optimal bound for this instance is~$z^* = 1038.07$, which, according to Theorem~\ref{thm:lagrangian_cut}, can be achieved with a single Lagrangian cut. Observe that in 7 seconds, algorithm \tsc{corner} reaches the optimal bound, while \tsc{benders} fails to even reach a bound of~$1015$. For this same instance, Appendix~\ref{appendix:experiments_separation_rays} reports the time to separate each Benders' cut and the number of ray inequalities generated by Algorithm~\tsc{SolvePolar}.

\begin{figure}
    \centering
    \includegraphics[scale=0.22]{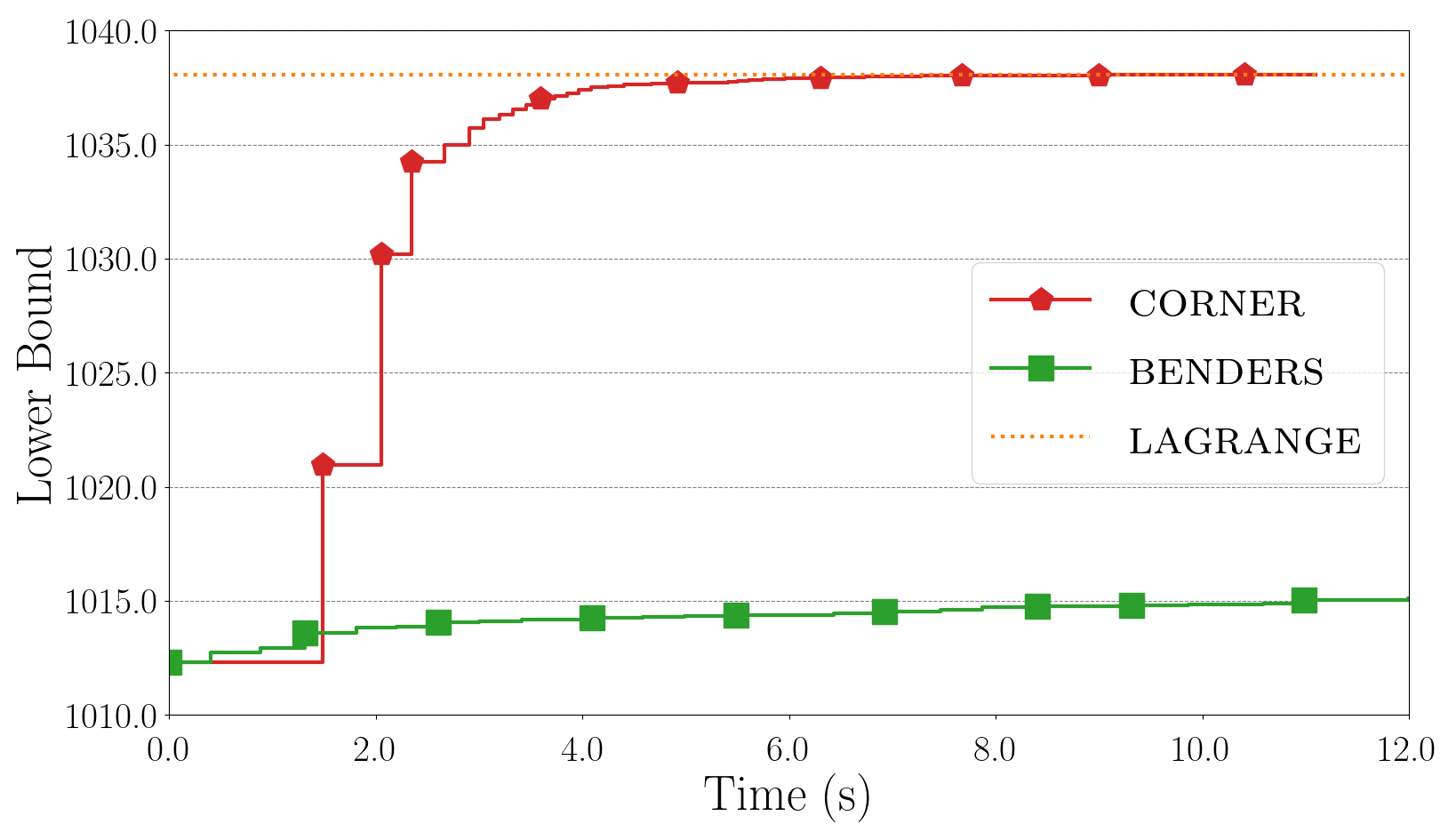}
    \caption{Bound improvement for instance ``30\_7\_0.95\_1.00\_8'' of~\cite{parada2024disaggregated} ($|V| = 30$ and~$k = 7$).}
    \label{figure:convergence}
\end{figure}

Table~\ref{table:number_cuts} presents the average number of Benders' cuts separated (recall that corner Benders' cuts are Benders' cuts) and the average root node solving time (which is capped at 1 hour if the algorithm does not solve the root node within the time limit). Observe that \tsc{benders} separates significantly more cuts than \tsc{corner} and takes much more time to converge, strengthening our point that \tsc{benders} has worse convergence to the optimal bound. Furthermore, as previously shown, while \tsc{lagrange} reaches the optimal bound with a single cut, it is generally computationally less efficient than \tsc{corner} for solving the problem to integrality.

\begin{table}[htb!]
\centering
\begin{tabular}{r@{\hskip 2em}rrrrr}
\toprule
& \multicolumn{2}{c}{Benders' cuts} & & \multicolumn{2}{c}{Root Time (s)} \\
\cmidrule(r){2-3} \cmidrule(r){5-6}
& \tsc{benders} & \tsc{corner} && \tsc{benders} & \tsc{corner} \\
\midrule
small~$k$ & 143.88 & 20.31 && 2315.67 & 44.31 \\
large~$k$ & 637.98 & 75.43 && 2217.13 & 42.65 \\
\bottomrule
\end{tabular}
\caption{Average number of Benders' cuts separated and root node solving time for algorithms \tsc{benders} and \tsc{corner}.}
\label{table:number_cuts}
\end{table}

\section{Conclusion}

In this work, we have developed a new way to generate Benders' cuts by exploiting basis information on the LP solved by the Benders' cut generating subproblem. 
When applied to instances of the VRPSD, our approach improved the performance of a state-of-the-art problem-specific algorithm found in the literature.

We believe the idea of exploring basis information to generate Benders' cuts shows much promise and future work includes testing this approach in other important problems, as well as considering how the consideration of integrality on the Benders' subproblem variables may affect the cut-generating process using such basis information.

\bibliographystyle{plainnat}
\bibliography{bibliography}

\renewcommand{\theHsection}{A\arabic{section}}
\begin{APPENDICES}
\section{Proof of Theorem~\ref{thm:polar}}
\label{appendix:proof_polar}

To facilitate the reading, we repeat the theorem statement.
\polar*
\begin{proof}
We start with item (b). Suppose that Problem~\eqref{problem_polar} is unbounded, so we have a certificate~$(\alpha, \alpha_0) \in \recc(\revpolar)$. Since~$0\in \revpolar$, for all~$\mu \in \R_+$, we have that~$\mu  (\alpha, \alpha_0) \in \revpolar$. This implies that
\begin{equation}
    \alpha^\T (w^* - w') + \alpha_0  (\theta^* - \theta') \geq 0 ~ \iff ~ \alpha^\T w^* + \alpha_0  \theta^* \geq \alpha^\T w' + \alpha_0  \theta'.
    \label{ineq:proof_polar2}
\end{equation}
The inequality above is valid for the unique extreme point of~$\epi(\hyperref[definition:value_function]{f_C})$, and since~$(\alpha, \alpha_0)$ satisfy all ray inequalities of~$\revpolar$ (and~$\alpha_0 \geq 0$), we know that~\eqref{ineq:proof_polar2} is a valid inequality for~$\epi(\hyperref[definition:value_function]{f_C})$. Moreover,~\eqref{ineq:proof_polar2} is tight at~$(w', \theta')$, which is a point in the relative interior of~$\epi(\hyperref[definition:value_function]{f_C})$. This shows that~\eqref{ineq:proof_polar2} holds with equality for all points in~$\epi(\hyperref[definition:value_function]{f_C})$.

To prove (a), we assume by contradiction that~$z^\circ \geq -1$ and~$(\bar{w}, \bar{\theta}) \notin \epi(\hyperref[definition:value_function]{f_C})$. Using the separating hyperplane theorem, we have a Fenchel cut~$\alpha^\T w + \alpha_0 \theta \geq \sigma_{\epi(\hyperref[definition:value_function]{f_C})}(\alpha, \alpha_0)$ (with~$\alpha_0 \geq 0$) that is valid for~$\epi(\hyperref[definition:value_function]{f_C})$ but is violated by~$(\bar{w}, \bar{\theta})$. Suppose first that~$\alpha^\T w' + \alpha_0 \theta' = \sigma_{\epi(\hyperref[definition:value_function]{f_C})}(\alpha, \alpha_0)$, meaning that the Fenchel cut associated with~$(\alpha, \alpha_0)$ defines an implicit equality of~$\epi(\hyperref[definition:value_function]{f_C})$. We can then follow the reasoning for item (b) to learn that~$(\alpha, \alpha_0) \in \recc(\revpolar)$; and as~$\alpha^\T \bar{w} + \alpha_0 \bar{\theta} < \sigma_{\epi(\hyperref[definition:value_function]{f_C})}(\alpha, \alpha_0) = \alpha^\T w' + \alpha_0 \theta'$, we have that~$z^\circ = - \infty$, a contradiction. So we can safely assume that~$\alpha^\T w' + \alpha_0 \theta' > \sigma_{\epi(\hyperref[definition:value_function]{f_C})}(\alpha, \alpha_0)$. Now subtract~$\alpha^\T w' + \alpha_0  \theta'$ from both sides of~$\alpha^\T w + \alpha_0 \theta \geq \sigma_{\epi(\hyperref[definition:value_function]{f_C})}(\alpha, \alpha_0)$ to get
\begin{equation}
    \alpha^\T (w - w') + \alpha_0  (\theta - \theta') \geq \beta \defeq \sigma_{\epi(\hyperref[definition:value_function]{f_C})}(\alpha, \alpha_0) - \alpha^\T w' - \alpha_0  \theta',
    \label{ineq:proof_polar1}
\end{equation}
where~$\beta < 0$. Dividing both sides of~\eqref{ineq:proof_polar1} by~$|\beta|$ and setting~$\alpha' = (1 / |\beta|) \alpha$ and~$\alpha'_0 = (\alpha_0 / |\beta|)$ yields the inequality~$\alpha' (w - w') + \alpha'_0 (\theta - \theta') \geq -1$, which implies that~$z^\circ < -1$, a contradiction. To see this, note that since the Fenchel cut defined by~$(\alpha, \alpha_0)$ is violated by~$(\bar{w}, \bar{\theta})$, we know that~$\alpha' (\bar{w} - w') + \alpha'_0 (\bar{\theta} - \theta') < -1$. Moreover, as~$(\alpha')^\T w + \alpha'_0 \theta \geq -1$ is a valid inequality for~$E = \epi(\hyperref[definition:value_function]{f_C}) - (w', \theta')$, we have that~$(\alpha', \alpha'_0)$ belongs to~$\revpolar$.

Lastly, suppose that we have~$(\alpha, \alpha_0)$ such that~$\alpha^\T (\bar{w} - w') + \alpha_0 (\bar{\theta} - \theta') = z^\circ < -1$ and~$(\alpha, \alpha_0)$ is a vertex of~$\revpolar$. By translation, it suffices to show that~$\alpha^\T w + \alpha_0 \theta \geq -1$ is a facet of~$E$. Let a minimal representation for~$E$ be~
\begin{equation}
    E =
    \left\{ (\alpha, \alpha_0) \in \R^{p + 1} :~
    \begin{aligned}
    & (\rho^j)^\T x + \rho^j_0 \theta \geq -1, & \forall j \in J \\
    & (\rho^j)^\T x + \rho^j_0 \theta = 0, & \forall j \in J'
    \end{aligned}
    \right\},
    \label{minimal_repr}
\end{equation}
where~$J$ and~$J'$ are index sets. For all~$j \in J \cup J'$, we know that~$(\rho^j, \rho^j_0) \in \revpolar$. Additionally, as~$\alpha^\T w + \alpha_0 \theta \geq -1$ is a valid inequality for~$E$, there exists~$\mu \geq 0$ and~$\mu' \geq 0$ such that~$(\alpha, \alpha_0) = \sum_{j \in J} \mu_j (\rho^j, \rho^j_0) + \sum_{j \in J'} \mu'_j (\rho^j, \rho^j_0)$ and~$-1 = \sum_{j \in J} \mu_j (-1) + \sum_{j \in J'} \mu'_j (0)$ (this follows from the fact that~\eqref{minimal_repr} is a minimal representation, see Section 3.9 of~\cite{ipbook}). Since~$(\alpha, \alpha_0)$ is an extreme point of~$\revpolar$, it follows that~$|J| = 1$ and~$\alpha^\T w + \alpha_0 \theta \geq -1$ defines a facet of~$E$.
\end{proof}

\section{\mytitle{Corner Benders' cuts as restricted dual subproblems}}
\label{appendix:corner_restricted_dual}

Consider the setup in Remark~\ref{remark:simplex} and assume that~$C$ is a corner associated with a set of basic variables~$B$ (and~$N = [m] \setminus B$). In this case, it is known that we can write~$C = \{y \in \R^m \suchthat Ay = b, y_B \in \R^B, y_N \in \R^N_+\}$ (see Chapter 6.1 of~\cite{ipbook}), and therefore we can check if~$(\bar{w}, \bar{\theta})$ lies in~$\epifY[C]$ by solving the problem
\begin{equation}
\begin{aligned} 
\min_{y} ~~& 0 & \\
\textsuchthat{}~& -d^{\T}_B y_B - d^{\T}_N y_N \geq - \bar{\theta}, & (\alpha_0) \\
& Q_B y_B + Q_N y_N = \bar{w}, & (\alpha) \\
& A_B y_B + A_N y_N = b, & (\nu) \\
& y_B \in \R^{B}, y_N \in \R^{N}_+.
\end{aligned}
\label{problem:cone_benders_1}
\end{equation}

\noindent
By LP duality, we have that~$(\bar{w}, \bar{\theta}) \in \epifY[C]$ if and only if the optimal value of the optimization problem below is at most zero.
\begin{subequations}
\begin{align} 
\max_{\nu,\alpha,\alpha_0} ~~& \nu^\T b + \alpha^\T \bar{w} - \alpha_0 \bar{\theta} & \nonumber \\
\textsuchthat{}~& -\alpha_0 d_B + Q^\T_B \alpha + A^\T_B \nu = 0, \label{problem:cone_benders_2:ineq1} \\
& - \alpha_0 d_N + Q^\T_N \alpha + A^\T_N \nu \leq 0, \label{problem:cone_benders_2:ineq2} \\
& \alpha_0 \geq 0.
\end{align}
\label{problem:cone_benders_2}
\end{subequations}

Observe that if variables~$y_B$ were nonnegative in Problem~\eqref{problem:cone_benders_1}, then~\eqref{problem:cone_benders_2:ineq1} would be inequalities instead of equalities. In this sense, relaxing~$\epifY$ to~$\epifY[C]$ is equivalent to constraining the Benders' dual subproblem according to~\eqref{problem:cone_benders_2:ineq1}. This idea of restricting a cut-generating program to separate inequalities more efficiently was explored by~\cite{chen2022generating} in the context of stochastic programs with integer subproblems. Our discussion here shows that corner Benders' cuts can also be interpreted from a similar lens, except that here we constrain the cut-generating program according to the given basis~$B$.

\section{\mytitle{Lagrangian dual problem for block-diagonal matrices}}
\label{appendix:block_diagonal}

Suppose Problem~\eqref{problem:basic} can be written as
\begin{equation}
\begin{array}{lllll}
\min & c^\T x + \sum_{i \in [t]} (d^i)^\T y^i \\[0.5em]
\textsuchthat{} & T^i x + Q^i y^i = h^i, & \forall i \in [t], & \quad (\alpha^i) \\
& x \in X, \\
& y^i \in Y^i, & \forall i \in [t].
\end{array}
\label{problem:block_diagonal}
\end{equation}
Reformulating Problem~\eqref{problem:block_diagonal} in an epigraphical form yields
\begin{equation}
    \min_{x,\theta} \left\{c^\T x + \sum_{i \in [t]} \theta_i \suchthat x \in X,\, (h^i - T^i x, \theta^i) \in \epi(f^i),~\forall i \in [t] \right\},
    \label{problem:block_diagonal_epigraph}
\end{equation}
where~$f^i(w) = \min \{(d^i)^\T y^i : Q^i y^i = w, y^i \in Y^i\}$ and~$\epi(f^i) = \{(w, \theta) : \theta \geq f^i(w)\}$, for all~$i \in [t]$. 

Let~$\mathcal{F}'$ be the feasible region of Problem~\eqref{problem:block_diagonal_epigraph}. We can easily adapt Theorem~\ref{thm:lagrangian_cut} to Problem~\eqref{problem:block_diagonal_epigraph}. Indeed, given~$\rho \in \R^n$ and~$\rho^1_0, \ldots, \rho^t_0 \in \R_+$, one can check that if~$\hat{\alpha}^1, \ldots, \hat{\alpha}^t$ is optimal for
\begin{equation}
    \max_{\alpha^1, \ldots, \alpha^t} \left\{ - \sum_{i \in [t]} (\alpha^i)^\T h^i + \sigma_{X}\left(\rho + \sum_{i \in [t]} (T^i)^\T \alpha^i \right) + \sum_{i \in [t]} \sigma_{Y^i} \left((Q^i)^\T \alpha^i + \rho^i_0 \, d^i \right)\right\},
    \label{problem:lagrangian_appendix}
\end{equation}
then the Benders' cuts~$(\hat{\alpha}^i)^\T (h^i - T^i x) + \rho^i_0 \,  \theta^i \geq \sigma_{\epi(f^i)}(\hat{\alpha}^i, \rho^i_0)$, for~$i \in [t]$, imply the inequality~$\rho^\T x + \sum_{i \in [t]} \rho^i_0 \theta \geq \sigma_{\mathcal{F}'}(\rho, \rho^1_0, \ldots, \rho^t_0)$.  Proposition~\ref{prop:strong_corner} can also be adapted to this block-diagonal case, yielding one corner~$C^i$ for each block of constraints~$i \in [t]$.

\section{\mytitle{Full cutting-plane loop for the VRPSD}}
\label{appendix:cutting_plane_loop}

In the following algorithm, we denote by \tsc{(VRPSD-BASE)} the linear program below.
\begin{equation}
    \begin{aligned}
    &\min_{x,\theta} && c^\T x + \sum_{v \in V_+} \theta'_v & \\
    & \textsuchthat{} && x(\delta(0)) = 2k, & \\
    &&& x(\delta(v)) = 2, & \forall v \in V_+, \\
    &&& x_e \leq 1, & \forall e \in E \setminus \delta(0), \\
    &&& x \in [0, 2]^E, \theta' \in \R^{V_+}_+. &
    \end{aligned}
    \tag{\tsc{VRPSD-BASE}}
    \label{formulation:vrpsd_algorithm}
\end{equation}

\begin{algorithm}[H]
\small
\begin{algorithmic}[1]
\setlength{\itemsep}{-0.2em}
\Procedure {VRPSD-CuttingPlane}{$\tsc{algorithm}$}
\If {$\tsc{algorithm} = \textsc{lagrange}$~\textbf{or}~$\tsc{algorithm} = \textsc{corner}$}
    \State {Get~$\hat{\alpha}$ optimal for Problem~\eqref{problem:lagrangian1} by solving Problem~\eqref{problem:lagrangian_flow}.}
    \State {Get a shortest-path tree~$\mathcal{T}$ with respect to weights~$\{d'_a = d_a + \hat{\alpha}_{\edge(a)}\}_{a \in \mathcal{N}}$.}
    \State {Set~$C = \{y^*\} + \cone(R)$ to be the corner associated with~$\mathcal{T}$.}
    \State {Set~$y'$ to be a point in the relative interior of~$Y$.}
    \State {$(w', \theta') \gets (Q y', d^\T y' + 1)$}
\EndIf
\State {$\tilde{\mathcal{R}} \gets \emptyset$}
\State {Initialize (\textsc{VRPSD-LP}) as the linear program~\eqref{formulation:vrpsd_algorithm}.}
\If {$\tsc{algorithm} = \textsc{lagrange}$}
    \State {Add Lagrangian Cut~$\hat{\alpha}^\T x + \theta \geq \sigma_Y(Q^\T \hat{\alpha} + d)$ to (\textsc{VRPSD-LP}).}
\EndIf
\State {$\tsc{separated} \gets \tsc{true}$}
\State {$\tsc{prevBound} \gets -\infty$}
\State {$\tsc{count} \gets 1$}
\While {$\tsc{separated}$~\textbf{and}~$\tsc{count} \leq 10$}
    \State {$\tsc{separated} \gets \tsc{false}$}
    \State {Solve~(\textsc{VRPSD-LP}) to get a candidate solution~$(\bar{x}, \bar{\theta}')$.}
    \State {$\bar{z} \gets c^\T \bar{x} + \sum_{v \in V_+} \bar{\theta}'_v$}
    \If {$\bar{z} - \tsc{prevBound} \leq 10^{-3}$}
        \State {$\tsc{count} \gets \tsc{count} + 1$}
    \Else
        \State {$\tsc{count} \gets 1$}
    \EndIf
    \State {$\tsc{prevBound} \gets \bar{z}$}
    \If {Heuristics found violated RCI's or ILS cuts} \label{line:algorithm_rci_ils}
        \State {Add violated cuts to (\textsc{VRPSD-LP}).}
        \State {$\tsc{separated} \gets \tsc{true}$}
    \ElsIf {$\tsc{algorithm} = \textsc{benders}$}
        \If {Problem~\eqref{problem:fischetti} is feasible}
            \State {Let~$\bar{\alpha}$ and~$\bar{\alpha}_0$ be optimal for Problem~\eqref{problem:fischetti}.}
            \State {Add inequality~$\bar{\alpha}_0 \, (\sum_{v \in V_+} \theta'_v) \geq \bar{\alpha}^\T x$ to (\textsc{VRPSD-LP}).}
            \State {$\tsc{separated} \gets \tsc{true}$}
        \EndIf
    \ElsIf {$\tsc{algorithm} = \textsc{corner}$}
        \State{$((\rho, \rho_0, \beta), z^\circ, \tilde{R}) \gets \tsc{SolvePolar}((\bar{x}, \bar{\theta}), (Qy', d^\T ), y^*, R, \tilde{R})$}
        \If {$z^\circ < -1$}
            \State {$\tsc{separated} \gets \tsc{true}$}
            \State {Add inequality~$\rho^\T x + \rho_0 \, (\sum_{v \in V_+} \theta'_v) \geq \beta$ to (\textsc{VRPSD-LP}).}
            \If {$z^\circ = -\infty$}
                \State {Add inequality~$\rho^\T x + \rho_0 \, (\sum_{v \in V_+} \theta'_v) \leq \beta$ to (\textsc{VRPSD-LP}).}
            \EndIf
        \EndIf
    \EndIf
\EndWhile

\State {\textbf{return} (\textsc{VRPSD-LP})}
\EndProcedure
\end{algorithmic}
\caption{\mytitle{Cutting-plane loop for the VRPSD}}
\label{algorithm:vrpsd}
\end{algorithm}

\section{\mytitle{Separation of different Benders' cuts}}
\label{appendix:separation_benders}

We now describe how the different Benders’ cuts evaluated in our computational experiments are separated.

\paragraph{\mytitle{Benders' decomposition.}}

To separate optimality/feasibility cuts in the standard Benders decomposition approach, we used the well-known normalization proposed by~\cite{fischetti2010note}. This is one of the normalizations used in CPLEX~\citep{bonami2020implementing} (another option is normalizing the righthand side as in \cite{conforti2019facet}), and it is used as a baseline approach for recently proposed Benders' cuts selection methods~\citep{seo2022closest, brandenberg2021refined, hosseini2024deepest}. 

Let~$(\bar{x}, \bar{\theta}') \in \R^E \times \R^{V_+}$ be a candidate solution. Set~$\bar{\theta} = \sum_{v \in V_+} \bar{\theta}'_v$ and consider the flow-based formulation described in Section~\ref{subsection:vrpsd_formulation}. Our Benders' decomposition implementation separates~$(\bar{x}, \bar{\theta})$ from~$\epifY$ by solving the following optimization problem:
\begin{subequations}
\label{problem:fischetti}
\begin{align} 
\max_{\alpha,\alpha_0,\nu} ~~& \alpha^\T \bar{x} - k \nu_s + k \nu_t - \alpha_0 \bar{\theta} & \nonumber \\
\textsuchthat{} & \nu_{\mathcal{S}_1} - \nu_{\mathcal{S}_2} + \alpha_{\edge(\mathcal{S}_1 \mathcal{S}_2)} \leq \alpha_0 \, d_{\mathcal{S}_1 \mathcal{S}_2}, & \forall \mathcal{S}_1 \mathcal{S}_2 \in \mathcal{A}, \\
& \| \alpha \|_1 + \alpha_0 \leq 1, & \\ 
& \alpha_0 \geq 0.
\end{align}
\end{subequations}

\paragraph{\mytitle{Lagrangian cut}.}

To solve the Lagrangian dual problem in Proposition~\ref{prop:strong_corner}, we substitute the~$x_e$ variables     %
    according to the linking constraints
    $\sum_{a \in \mathcal{A} : \edge(a) = e} y_a = x_e$
    in Problem~\eqref{problem:flow_cl},
and we solve the following flow-based formulation.
\begin{subequations}
\label{problem:lagrangian_flow} %
\begin{align} 
\min_{y} ~~& \sum_{a \in \mathcal{A}} (c_{\edge(a)} + d_a) \, y_a & \nonumber \\
\textsuchthat{} & y(\delta^-(\mathcal{S})) - y(\delta^+(\mathcal{S})) = k \cdot \allones_t - k \cdot \allones_s, & \forall \mathcal{S} \in \mathcal{V}, &&~~(\nu_{\mathcal{S}}) \label{lagrangian_flow:conserv} \\
& \sum_{e \in \delta(v)} \left( \sum_{a \in \mathcal{A} : \edge(a) = e} y_a \right) = 2, & \forall v \in V_+, &&~~(\gamma_{\{v\}}) \label{lagrangian_flow:degree} \\
& \sum_{e \in \delta(S)} \left( \sum_{a \in \mathcal{A} : \edge(a) = e} y_a \right) \geq 2 k(S), & \forall S \subseteq V_+, |S| \geq 2, &&~~(\gamma_S) \label{lagrangian_flow:rci} \\
& y_a \geq 0, & \forall a \in \mathcal{A},
\end{align}
\end{subequations}
where inequalities~\eqref{lagrangian_flow:rci} are separated heuristically with the CVRPSEP package~\citep{Lysgaard2004}. Let~$\bar{\nu}$ and~$\bar{\gamma}$ be optimal dual variables associated with Formulation~\ref{problem:lagrangian_flow}. For each edge~$e \in E$, we set~$\hat{\alpha}_e = c_e - \sum_{\emptyset \subset S \subseteq V_+} \bar{\gamma}_S$, and we prove in Appendix~\ref{appendix:solve_lagrangian} that~$\hat{\alpha}$ is optimal for Problem~\ref{problem:lagrangian1}.

\paragraph{\mytitle{Corner Benders' cuts}.}

The first step in generating corner Benders' cuts is to obtain~$\hat{\alpha}$ optimal for Problem~\ref{problem:lagrangian1} by solving Problem~\ref{problem:lagrangian_flow}, as described earlier. We then apply Proposition~\ref{prop:strong_corner} to find a corner that is optimal with respect to~$\sigma_Y(Q^\T \hat{\alpha} + d)$. As illustrated in Section~\ref{subsection:corner_example}, we obtain this corner by setting weights~$d'_a = d_a + \hat{\alpha}_{\edge(a)}$, for each~$a \in \mathcal{A}$, and then using these weights to find a shortest path tree~$\mathcal{T}$ rooted at~$s$. To obtain the interior point $y'$ in Algorithm~\ref{algorithm:separate_corner}, we optimize a zero-objective function over the set $Y$ using Gurobi's barrier algorithm.

In our implementation of Algorithm~\ref{algorithm:solve_polar}, given a candidate solution~$(\bar{\alpha}, \bar{\alpha}_0)$ to \cglp, we first loop through all arcs in~$\mathcal{A} \setminus \mathcal{T}$ to construct mappings (implemented as arrays)~$M_1 : E \to \mathcal{A}$ and~$M_2  : E \to \Q$ such that, for each edge~$e \in E$,~$M_1(e)$ is an arc that belongs to~$\argmin \left\{ \bar{\alpha}^\T (Q r^a) + \bar{\alpha}_0 (d^\T r^a) : \edge(a) = e,~a \in \mathcal{A} \setminus \mathcal{T} \right\}$ (or~$\emptyset$ if no such edge exists) and~$M_2(e)$ is the value of~$\bar{\alpha}^\T (Q r^{M_1(e)}) + \bar{\alpha}_0 (d^\T r^{M_1(e)})$ (or~$+\infty$ if~$M_1(e) = \emptyset$). (Recall that we defined in Section~\ref{subsection:corner_example} that~$r^a = \mathbbm{1}(C^a)$, for each~$a \in \mathcal{A} \setminus \mathcal{T}$.) Next, we create an array~$L$ of edges~$e$ that is in nondecreasing order of~$M_2(e)$. We then traverse over the first~$\min\{|L|, 100\}$ edges~$e$ in~$L$, and if~$M_2(e) < 0$, we add the inequality~$\bar{\alpha}^\T (Q r^{M_1(e)}) + \bar{\alpha}_0 (d^\T r^{M_1(e)}) \geq 0$ to~\cglp.

In Appendix~\ref{appendix:corner_network}, we also show how we further explore the special structure of corners arising from network flow polytopes to separate the ray inequalities faster. Although this acceleration is not essential for the effectiveness of corner Benders' cuts, we use it because it can lead to performance improvements. Our preliminary experiments in the instances with large~$k$ and~$|V| = 40$ indicate that using this acceleration can reduce the execution time of the root cutting-plane loop by an average factor of two, translating to a reduction of 25 seconds in the average runtime.

\section{\mytitle{Using network flow structure to accelerate the separation of corner Benders' cuts}}
\label{appendix:corner_network}

Consider the separation of ray inequalities in Algorithm~\ref{algorithm:solve_polar}, specifically the loop in lines~\ref{alg_solve_polar:begin_for}-\ref{alg_solve_polar:end_for}. Since matrix~$Q$ has~$m$ columns and~$p$ rows, each iteration of this loop has a time complexity of~$\mathcal{O}(|R| m p)$. In the specific case where~$Y$ is a network flow polytope associated with a network~$\mathcal{N} = (\mathcal{V}, \mathcal{A})$ and~$R$ represents the cycles described in Section~\ref{subsection:corner_example}, this time complexity simplifies to~$\mathcal{O}(|\mathcal{A}|^2 p)$. In contrast, Algorithm~\ref{algorithm:network_separation} below detects violated ray inequalities in~$\mathcal{O}(|\mathcal{A}| p)$ time. Furthermore, in the particular case of the VRPSD formulation shown in Section~\ref{subsection:vrpsd_formulation}, the product~$\bar{\alpha}^\T Q_a$ simplifies to~$\bar{\alpha}_{\edge(a)}$, which can be computed in~$\mathcal{O}(1)$. Consequently, for the VRPSD, Algorithm~\ref{algorithm:network_separation} has a time complexity of~$\mathcal{O}(|\mathcal{A}|)$.

\begin{algorithm}[H]
\hspace*{\algorithmicindent} \textbf{Input:} $(\bar{\alpha}, \bar{\alpha}_0) \in \R^p \times \R_+$, network~$(\mathcal{N} = (\mathcal{V}, \mathcal{A}), b, u)$, spanning tree~$\mathcal{T}$ and basic feasible solution~$y^*$. \\
\hspace*{\algorithmicindent} \textbf{Output:} Set of rays~$R''$ such that~$\bar{\alpha}^\T (Qr) + \bar{\alpha}_0 (d^\T r) < 0$, for all~$r \in R''$.
\begin{algorithmic}[1]
\Procedure {FindViolatedRay}{$\bar{\alpha}, \bar{\alpha}_0, \mathcal{N} = (\mathcal{V}, \mathcal{A}), b, u, \mathcal{T}, y^*$}
    \State{$R'' \gets \emptyset$}
    \State \multiline{Let~$\oa{\mathcal{T}}$ be the arc set of an arborescence obtained by rooting~$\mathcal{T}$ at an arbitrary vertex~$s$.}
    \State {Let~$v_1 = s, v_2, \ldots, v_{|\mathcal{V}|}$ be an ordering obtained with a preorder traversal of~$\oa{\mathcal{T}}$.}
    \State {$\nu_s \gets 0$}
    \For {$j = 2, \ldots, |\mathcal{V}|$}
        \State {Let~$i < j$ be such that~$(v_i, v_j) \in \oa{\mathcal{T}}$.}
        \If {$(v_i, v_j) \in \mathcal{T}$}
            \State {$\nu_{v_j} \gets \nu_{v_i} + \bar{\alpha}^\T Q_a + \bar{\alpha}_0 \: d_a$}
        \Else
            \State {$\nu_{v_j} \gets \nu_{v_i} - \bar{\alpha}^\T Q_a - \bar{\alpha}_0 \: d_a$}
        \EndIf
    \EndFor
    \For {$uv \in \mathcal{A} \setminus \mathcal{T}$}
        \If { $\nu_{u} - \nu_v + \bar{\alpha}^\T Q_{uv} + \bar{\alpha}_0 \: d_{uv} < 0$ }
            \State{$R'' \gets R'' \cup \{r^{uv}\}$}
        \EndIf
    \EndFor
    \State {\textbf{return}~$R''$}
    \EndProcedure
\end{algorithmic}
\caption{Separation of Corner Benders' Cuts for Network Flow Subproblems}
\label{algorithm:network_separation}
\end{algorithm}

The intuition behind Algorithm~\ref{algorithm:network_separation} is as follows: since the set of rays~$R$ corresponds to the cycles formed by adding an arc from~$\mathcal{A} \setminus \mathcal{T}$ to the spanning tree~$\mathcal{T}$, we can precompute the \textit{potentials}~$\nu \in \R^{\mathcal{V}}$ along~$\mathcal{T}$. In other words, we set~$\nu_v$ to be the cost (with respect to weights~$d' = Q^\T \bar{\alpha} + \bar{\alpha}_0 \, d$) of the unique path in~$\mathcal{T}$ connecting~$s$ and~$v$. For a given arc~$uv \in \mathcal{A}$, it is straightforward to verify that~$\nu_u - \nu_v + \bar{\alpha}^\T Q_{uv} + \bar{\alpha}_0 \, d_{uv} = \alpha^\T (Q r^{uv}) + \alpha_0 (d^\T r^{uv})$. Thus, Algorithm~\ref{algorithm:network_separation} efficiently solves the separation problem for the ray inequalities in the polar set~\eqref{epigraph_rev_polar}.

\section{
    \mytitle{Solving the Lagrangian cut generating problem with a linear program containing only} \texorpdfstring{$y$}{y} \mytitle{variables}
}
\label{appendix:solve_lagrangian}

Suppose that~$X = \{x \in \R^n_+: (A_x) x \geq b_x\}$ and~$Y = \{y \in \R^m_+: (A_y) y \geq b_y\}$. In addition, assume that~$T = -I$,~$h = \mathbf{0}$ and~$Q \in \R_+^{n \times m}$ (all entries of~$Q$ are nonnegative). By Theorem~\ref{thm:lagrangian_cut}, to obtain a Lagrangian cut we need to solve
\begin{equation}
    \max_{\alpha \in \R^n} \left\{\sigma_{X}(c - \alpha) + \sigma_{Y}(Q^\T \alpha + d)\right\}.
    \label{problem:lagrangian3}
\end{equation}

\noindent
One way to solve Problem~\eqref{problem:lagrangian3} is to first rewrite Problem~\eqref{problem:basic} to get the following primal-dual pair.

\mbox{}\\
\begin{minipage}{.45\linewidth}
    \begin{equation}
    \begin{aligned}
    \min_{x,y}~~& c^\T x + d^\T y \\
    \textsuchthat{} & x = Qy, & (\alpha) \\
    & (A_x) x \geq b_x, & (\gamma) \\
    & (A_y) y \geq b_y, & (\nu) \\
    & x, y \geq 0.
    \end{aligned}
    \label{problem:full_LP1}
    \end{equation}
\end{minipage}%
\begin{minipage}{.45\linewidth}
    \begin{equation}
    \begin{aligned}
    \max_{\gamma,\nu}~~& \gamma^\T b_x + \nu^\T b_y \\
    \textsuchthat{} & \alpha +(A_x)^\T \gamma \leq c, \\
    & -Q^\T \alpha + (A_y)^\T \nu \leq d, \\
    & \gamma, \nu \geq 0.
    \end{aligned}
    \label{problem:full_LP1_dual}
    \end{equation}
\end{minipage}

\mbox{}\\
It follows from LP duality that if~$(\bar{\alpha}, \bar{\gamma}, \bar{\nu})$ is optimal for Problem~\eqref{problem:full_LP1_dual}, then~$\bar{\alpha}$ is optimal for Problem~\eqref{problem:lagrangian3} (see~\cite{frangioni2005lagrangian} for a nice proof via ``partial dualization'').

Now substitute~$x = Qy$ in Problem~\eqref{problem:full_LP1} to get

\mbox{}\\
\begin{minipage}{.5\linewidth}
    \begin{equation}
    \begin{aligned}
    \min_{y}~~& (Q^\T c + d)^\T y \\
    \textsuchthat{} & (A_x) Qy \geq b_x, & (\gamma) \\
    & (A_y) y \geq b_y, & (\nu) \\
    & y \geq 0.
    \end{aligned}
    \label{problem:full_LP2}
    \end{equation}
\end{minipage}%
\begin{minipage}{.5\linewidth}
    \begin{equation}
    \begin{aligned}
    \max_{\gamma,\nu}~~& \gamma^\T b_x + \nu^\T b_y \\
    \textsuchthat{} & Q^\T (A_x)^\T \gamma + (A_y)^\T \nu \leq Q^\T c + d, \\
    & \gamma, \nu \geq 0.
    \end{aligned}
    \label{problem:full_LP2_dual}
    \end{equation}
\end{minipage}

\mbox{}\\
Let~$(\hat{\gamma}, \hat{\nu})$ be optimal for Problem~\eqref{problem:full_LP2_dual}. Set~$\hat{\alpha} = c - (A_x)^\T \hat{\gamma}$ and notice that~$(\hat{\alpha}, \hat{\gamma}, \hat{\nu})$ is optimal for Problem~\eqref{problem:full_LP1_dual}, as desired.

\section{\mytitle{On our implementation of Parada et al. (2024)}}
\label{appendix:parada}

We now describe a minor modification that we made to the algorithm proposed by~\cite{parada2024disaggregated} to address a small issue in their original description. For that, we briefly review the inequalities they use and their separation routines for fractional solutions. 

The DL-shaped method proposed by~\cite{parada2024disaggregated} separates two types of ILS inequalities: \emph{P-cuts} and \emph{S-cuts}. Given an elementary route~$R = (v_1, \ldots, v_\ell)$, a P-cut has the form
\begin{equation}
    \label{ineq:P-cut}
    \sum_{i = 1}^\ell \theta'_{v_i} \geq \mb{E}[\mathcal{Q}(R)] \cdot (1 + (x_{v_1v_2} - 1) + \cdots + (x_{v_{\ell - 1}v_\ell} - 1)).
\end{equation}
Given~$\emptyset \subsetneq S \subseteq V_+$, an S-cut has the form
\begin{equation}
    \label{ineq:S-cut}
    \sum_{v \in S} \theta'_v \geq L(S) \cdot \left(1 + \left(\sum_{uv \in E: u, v \in S} x_{uv} \right) - |S| + \left\lceil \frac{\bar{q}(S)}{C} \right\rceil \right),
\end{equation}
where~$L(S)$ is a \emph{recourse lower bound} as defined in~\cite{parada2024disaggregated} (we use the lower bounds~$L_1$ and~$L_2$ from their work).

Let~$(\bar{x}, \bar{\theta}')$ be a fractional solution for~(\textsc{VRPSD-LP}) and consider Line~\ref{line:algorithm_rci_ils} of Algorithm~\ref{algorithm:vrpsd}.
The separation algorithm of~\cite{parada2024disaggregated} first attempts to find violated S-cuts associated with the customer sets generated by CVRPSEP during the separation of RCIs. It then searches for additional violated S-cuts and P-cuts by examining the graph~$G(\bar{x})$, which is associated with the edges in the support of~$\bar{x}$ that are not incident to the depot, that is,~$V(G(\bar{x})) = V_+$ and~$E(G(\bar{x})) = \{e \in E \setminus \delta(0) : \bar{x}_e > 0\}$. For each connected component~$\mathcal{H}$ of~$G(\bar{x})$, the algorithm verifies if the S-cut for~$S = V(\mathcal{H})$ is violated by~$(\bar{x}, \bar{\theta}')$, and if so, it adds the inequality to the model. \cite{parada2024disaggregated} then adds a P-cut whenever~$|V(\mathcal{H})| = |E(\mathcal{H})| + 1$, since they claim that in this case~$\mathcal{H}$ is a path.

However, this last claim is not always correct, so we instead explicitly verify whether~$\mathcal{H}$ is a path by traversing its nodes. As a counterexample, consider the fractional solution~$\bar{x}$ in Figure~\ref{figure:parada_counter_example}. Here,~$G(\bar{x})$ is made of a single connected component~$\mathcal{H}$ that contains a vertex with degree 3, so it is not a path despite satisfying~$|V(\mathcal{H})| = |E(\mathcal{H})| + 1$.

\begin{figure}[htb]
    \centering
    \includegraphics[scale=1]{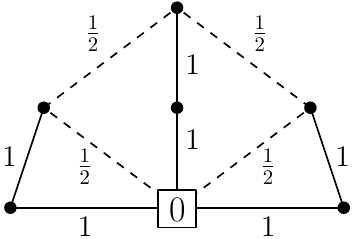}
    \caption{Example of a fractional solution~$\bar{x}$ where the associated connected component~$\mathcal{H}$ satisfies~$|V(\mathcal{H})| = |E(\mathcal{H})| + 1$ but is not a path.}
    \label{figure:parada_counter_example}
\end{figure}

\noindent
We remark, however, that the issue illustrated in Figure~\ref{figure:parada_counter_example} does not seem to occur often, and our preliminary experiments suggest that our fix does not lead to any significant performance improvement.

Figures~\ref{figure:gap_parada} and~\ref{figure:time_parada} compare our implementation of the algorithm from~\cite{parada2024disaggregated} with the original results reported by the authors. These figures are generated in the same way as Figure~\ref{figure:experiments}, except that here we consider all instances together. The curve labeled~\textsc{Parada}* corresponds to the results from the authors' online table, while~\textsc{Parada} denotes our implementation. We note that our implementation achieves comparable execution times but consistently obtains smaller root gaps. This improvement is likely due to our custom cutting-plane loop at the root node of the branch-and-bound tree, as described in Section~\ref{subsection:implementation} and detailed in Algorithm~\ref{algorithm:vrpsd}.

\begin{figure}[htb!]
    \centering
    \subfloat[Gap for all instances.]{
        \includegraphics[width=0.46\linewidth]{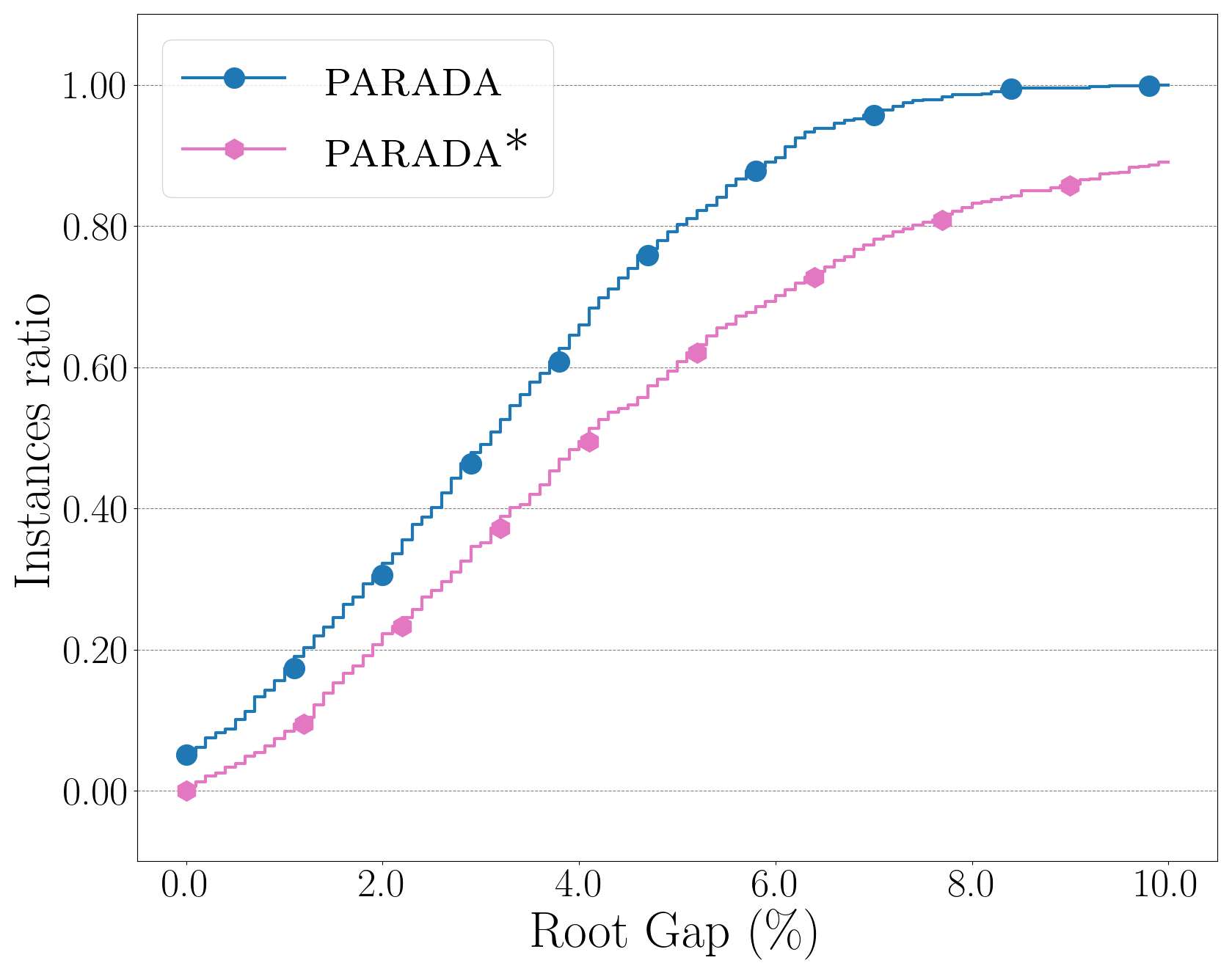}
        \label{figure:gap_parada}
    }
    \hfill
    \subfloat[Time for all instances.]{
        \includegraphics[width=0.46\linewidth]{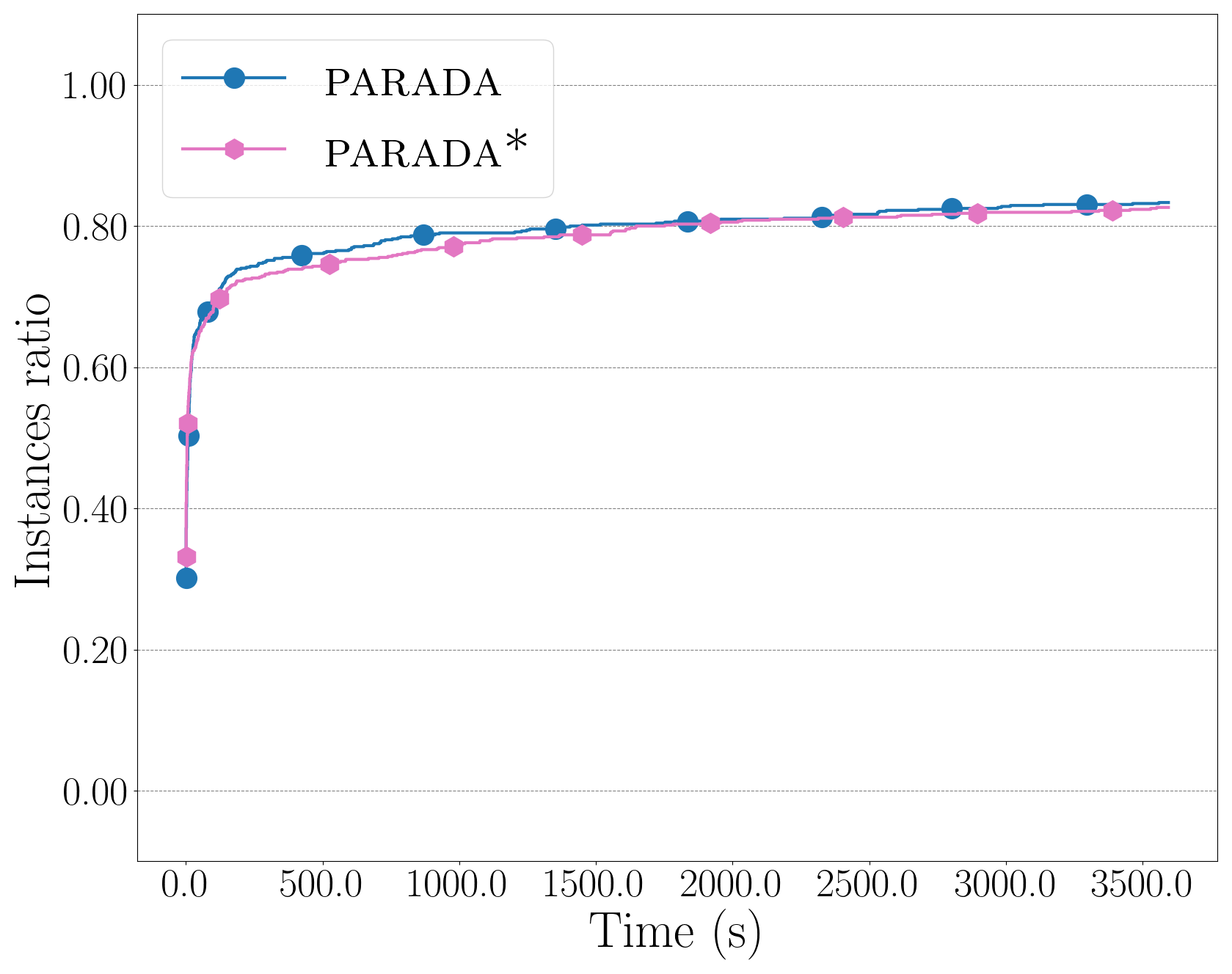}
        \label{figure:time_parada}
    }
    \\[0.5cm]
    \caption{
        Comparison between our implementation of the algorithm from~\cite{parada2024disaggregated} and the results reported in their online table.
    }
    \label{figure:experiments_parada}
\end{figure}

\section{\mytitle{Experiments on the separation of corner Benders' cuts}}
\label{appendix:experiments_separation_rays}

Here we examine the efficiency of Algorithm~\ref{algorithm:solve_polar} (i.e.~\solvereversepolar) to separate corner Benders' cuts. For the same instance as in Figure~\ref{figure:convergence}, we show in Figure~\ref{figure:separation} how much time we take to separate each Benders' cut. For example, a point~$p = (p_1, p_2) \in \Z_+ \times \R_+$ in the line of \tsc{corner} (time) indicates that \tsc{corner} took~$p_2$ seconds to separate the~$p_1$-th corner Benders' cut. Similarly, a point~$p = (p_1, p_2) \in \Z^2_+$ in the line of \tsc{corner} (rays) indicates that to separate the~$p_1$-th corner Benders' cut, \tsc{SolvePolar} separated~$p_2$ ray inequalities. To compare, we also show in Figure~\ref{figure:separation} the time that \tsc{benders} took to separate each Benders' cut.

\begin{figure}
    \centering
    \includegraphics[scale=0.22]{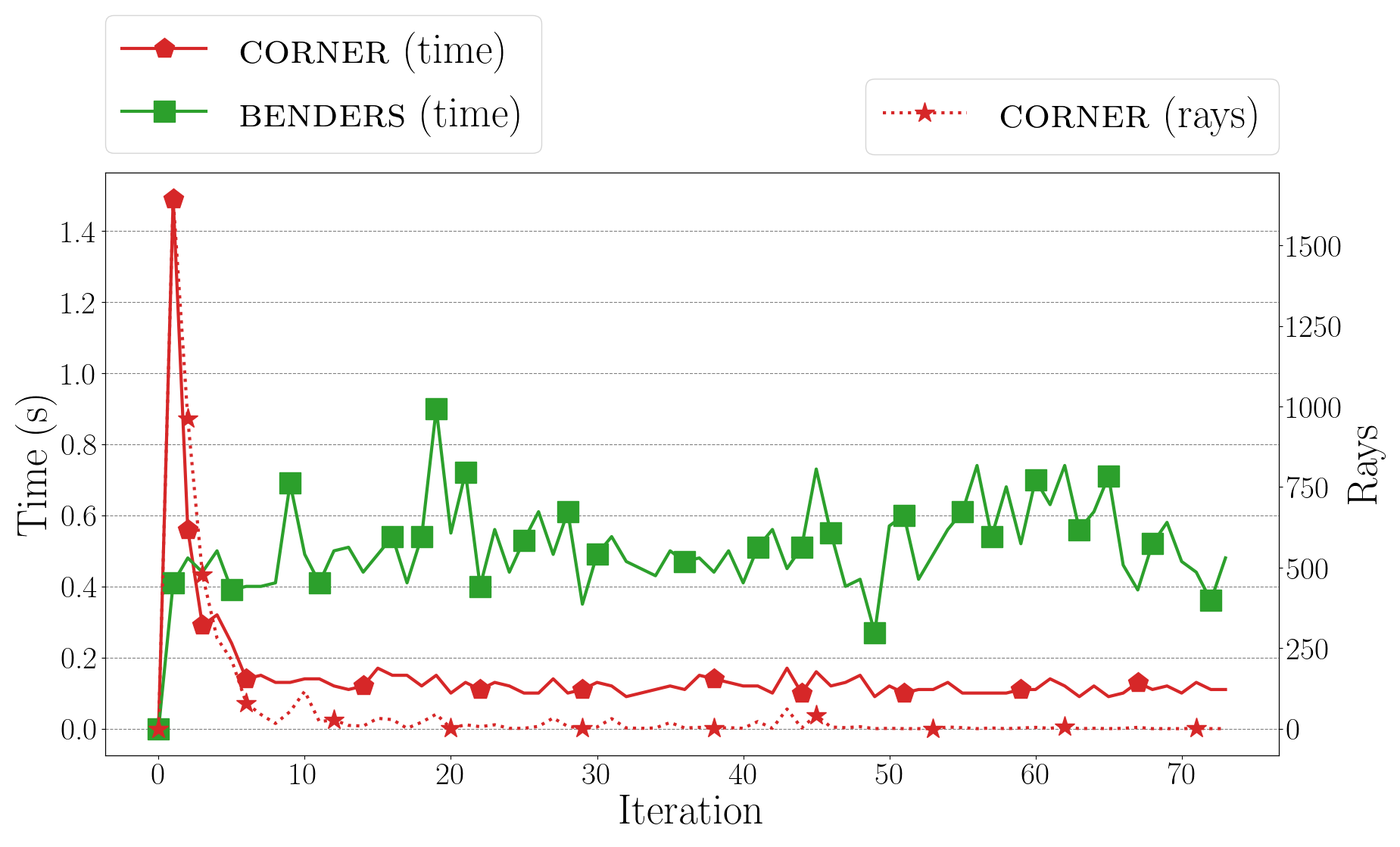}
    \caption{The solid lines indicate the time to separate each (corner) Benders' cuts. The dotted line shows how many ray inequalities were separated by~\solvereversepolar for each corner Benders' cut. Experiments were executed on instance ``30\_7\_0.95\_1.00\_8'' of~\cite{parada2024disaggregated} ($|V| = 30$ and~$k = 7$).}
    \label{figure:separation}
\end{figure}

Algorithm \tsc{corner} only took longer than \tsc{benders} when separating the first two corner Benders' cuts. Additionally, the dotted line illustrates that the number of ray inequalities separated by~\solvereversepolar rapidly decreases, highlighting the effectiveness of our ``warm-start'' strategy, which is encoded in the choice of the set $\tilde{R}$. 

It is also worth noting that the network $\mathcal{N} = (\mathcal{V}, \mathcal{A})$ for this instance contains 608 nodes and 12647 arcs. Consequently, the optimal corner $C = \{y^*\} + \cone(R)$ used in our approach has $|R| = |\mathcal{A}| - |\mathcal{V}| + 1 = 12040$ rays. Summing the number of rays separated in each iteration of \tsc{corner}, we find that a total of~$4405$ ray inequalities were separated. This collaborates with our observation in Section~\ref{subsection:corner_separation} that the projection $r \to (Qr, d^\T r)$ makes many ray inequalities redundant. By generating these ray inequalities as cutting planes, we can efficiently generate facet-defining inequalities for~$\epi(f_C)$ without having to repeatedly solve linear programs that have~$|R| + 1$ rows.

\end{APPENDICES}

\end{document}